\documentclass{amsart}
\usepackage{latexsym,amsxtra,amscd,ifthen}
\usepackage{amsfonts}
\usepackage{verbatim}
\usepackage{amsmath}
\usepackage{amsthm}
\usepackage{amssymb}

\input xy \xyoption{matrix}
\xyoption{arrow}\xyoption{frame}

\newcommand{\edge}{\ar@{-}}

\numberwithin{equation}{section}

\theoremstyle{plain}
\newtheorem{theorem}{Theorem}[section]
\newtheorem{lemma}[theorem]{Lemma}
\newtheorem{proposition}[theorem]{Proposition}
\newtheorem{corollary}[theorem]{Corollary}
\newtheorem{conjecture}[theorem]{Conjecture}

\theoremstyle{definition}
\newtheorem{definition}[theorem]{Definition}

\newtheorem{remark}[theorem]{Remark}
\newtheorem{remarks}[theorem]{Remarks}
\newtheorem*{remark*}{Remark}
\newtheorem{question}[theorem]{Question}

\newcommand{\gr}{\operatorname{gr}}
\newcommand{\fm}{\mathfrak m}
\newcommand{\fb}{\mathfrak b}
\newcommand{\fg}{\mathfrak g}

\newcommand{\GK}{\operatorname{GKdim}}

\newcommand{\gnoc}{\mathrel{{\lower.2ex\hbox{$\backsim$}}\llap{\raise.45ex\hbox{=}}}}

\DeclareMathOperator\GKdim{\operatorname{GKdim}}
\DeclareMathOperator\Ext{\operatorname{Ext}}

\DeclareMathOperator\gldim{\operatorname{gldim}}

\begin{document}

\title{Connected (graded) Hopf algebras}

\author{K.A. Brown}
\author{P. Gilmartin}
\address{School of Mathematics and Statistics\\
University of Glasgow\\ Glasgow G12 8QW\\
Scotland}
\email{ken.brown@glasgow.ac.uk}
\email{p.gilmartin.1@research.gla.ac.uk}

\author{J.J. Zhang}
\address{Department of Mathematics\\
Box 354350\\
University of Washington\\
Seattle, WA 98195, USA}
\email{zhang@math.washington.edu}


\begin{abstract}
We study algebraic and homological properties of
two classes of infinite dimensional Hopf algebras
over an algebraically closed field $k$ of characteristic
zero. The first class consists of those Hopf $k$-algebras
that are connected graded as algebras, and the second
class are those Hopf $k$-algebras that are connected
as coalgebras. For many but not all of the results
presented here, the Hopf algebras are assumed to have
finite Gel'fand-Kirillov dimension. It is shown that
if the Hopf algebra $H$ is a connected graded Hopf
algebra of finite Gel'fand-Kirillov dimension $n$, then
$H$ is a noetherian domain which is Cohen-Macaulay,
Artin-Schelter regular and Auslander regular of global
dimension $n$. It has $S^2 = \mathrm{Id}_H$, and is
Calabi-Yau. Detailed information is also provided
about the Hilbert series of $H$. Our results leave
open the possibility that the first class of algebras
is (properly) contained in the second. For this second
class, the Hopf $k$-algebras of finite Gel'fand-Kirillov
dimension $n$ with connected coalgebra, the underlying
coalgebra is shown to be Artin-Schelter regular of global
dimension $n$. Both these classes of Hopf algebra share
many features in common with enveloping algebras of finite
dimensional Lie algebras. For example, an algebra in
either of these classes satisfies a polynomial identity
only if it is a commutative polynomial algebra.
Nevertheless, we construct, as one of our main
results, an example of a Hopf $k$-algebra $H$ of
Gel'fand-Kirillov dimension 5, which is connected graded as
an algebra and connected as a coalgebra, but is not 
isomorphic as an algebra
to $U(\mathfrak{g})$ for any Lie algebra $\mathfrak{g}$.
\end{abstract}

\maketitle

\section*{Introduction}
\label{yysec0}
Throughout $k$ is an algebraically closed field of
characteristic $0$. Unless otherwise stated, all vector
spaces and tensor products are over $k$.

\subsection{Connectedness}
\label{yysec0.1}
The adjective ``connected'' can of course be applied to a multitude
of mathematical structures, with a corresponding multiplicity of
meanings. We are concerned here with two such structures,
connected graded associative $k$-algebras, and connected 
$k$-coalgebras.  On the
one hand, a \emph{connected graded} $k$-algebra $A$ is one which
is $\mathbb{N}$-graded, with $A = k \bigoplus_{i \geq 1}A(i)$; on the
other hand, a \emph{connected coalgebra} $C$ is a coalgebra whose
coradical $C_0$ is one-dimensional
\footnote{The term ``connected coalgebra'' was used by Milnor and
Milnor-Moore in their seminal work \cite{Mil, MM} to refer to an
$\mathbb{N}$-\emph{graded} coalgebra $C$ with $C_0 = k$, thus
matching the usage in the algebra setting. But the standard modern
usage for Hopf algebras is the one described here, that is, with
no graded hypothesis on the coalgebra.}.
It is in this second sense that the adjective is traditionally
applied to a Hopf algebra $H$: that is, the Hopf algebra $H$
(whether graded or not) is said to be \emph{connected} if its
coradical $H_0$ is $k$, \cite[Definition 5.1.5]{Mo}. One main
purpose of this paper is to study how these two versions of
connectedness interact in the setting of Hopf $k$-algebras,
where one, both or neither of them may hold for a given such
algebra.

Connected Hopf $k$-algebras $H$ of finite Gel'fand-Kirillov
dimension (denoted by $\GKdim H < \infty$) have been the subject
of a number of recent papers, for example \cite{EG, Zh, Wa1, BO},
and in part this paper continues that programme. We shall also
consider the question: If $H$ is a Hopf algebra with finite
 Gel'fand-Kirillov dimension,
what is the effect on its structure of assuming that $H$
\emph{is connected graded as an algebra}? As will become clear,
these two hypotheses - connected as a graded algebra, and connected
as a coalgebra - are intimately linked.

\subsection{The cocommutative and commutative cases}
\label{yysec0.2}
The \emph{cocommutative} connected Hopf $k$-algebras were
completely described half a century ago: by the celebrated theorem
of Cartier-Kostant \cite[Theorem 5.6.5]{Mo}, such Hopf algebras
$H$ are precisely the universal enveloping algebras $U(\mathfrak{g})$
of the $k$-Lie algebras $\mathfrak{g}$, with $\mathfrak{g}$
constituting the space of primitive elements of $H$. Such an
enveloping algebra $U(\mathfrak{g})$ has $\GKdim H = n < \infty$
if and only if $\dim_k \mathfrak{g} = n$. Suppose on the other hand
that a cocommutative Hopf $k$-algebra $H$ is connected graded as an
algebra. These conditions trivially ensure that the only group-like
element of $H$ is the identity element. Thus, since $H$ is pointed
as the ground field $k$ is algebraically closed \cite[$\S$5.6]{Mo},
it is connected as a coalgebra and the Cartier-Kostant theorem
again applies, yielding $H \cong U(\mathfrak{g})$ once more.
Moreover, this time the connected graded hypothesis implies that,
if $\GKdim H < \infty$, then $\mathfrak{g}$ is nilpotent.

There is an equally beautiful story in the other ``classical''
setting, where $H$ is \emph{commutative}. For a commutative
Hopf $k$-algebra $H$ the two finiteness conditions - being affine
and being noetherian - coincide by Molnar's theorem \cite{Mol},
and this hypothesis implies finiteness of Gel'fand-Kirillov
dimension. Writing $\dim U$ for the dimension of the variety $U$
and $\gldim H$ for the global dimension of $H$, we have:

\begin{theorem}
\label{yythm0.1}
Let $H$ be an affine commutative Hopf $k$-algebra. Then the
following are equivalent:
\begin{enumerate}
\item[(1)]
$H$ is a connected Hopf algebra.
\item[(2)]
$H$ is a connected graded algebra.
\item[(3)]
For some $n \geq 0$, $H \cong k[x_1, \ldots , x_n]$ as an algebra.
\item[(4)]
There is a unipotent $k$-group $U$ such that $H$ is the
algebra of regular functions $\mathcal{O}(U)$.
\end{enumerate}
When the above hold,
$$ n = \GKdim H = \dim U = \gldim H.$$
\end{theorem}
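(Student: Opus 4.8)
The plan is to translate everything into the language of affine group schemes. Since $H$ is an affine commutative Hopf $k$-algebra and $\operatorname{char} k = 0$, Cartier's theorem gives that $H$ is reduced, so $H \cong \mathcal{O}(G)$ for a smooth affine algebraic group $G$ over $k$; each of the four conditions can then be read off from $G$. I would first dispose of $(1)\Leftrightarrow(4)$ by representation theory. The simple subcoalgebras of $\mathcal{O}(G)$ are exactly the coefficient coalgebras of the simple rational $G$-modules, so the coradical $H_0$ equals $k$ precisely when every simple $G$-module is trivial. The standard characterization of unipotent groups is this very condition: $G$ is unipotent iff every nonzero rational representation has a nonzero fixed vector, equivalently (embedding $G$ into some $GL(V)$ and triangularizing) every simple $G$-module is trivial. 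This yields $(1)\Leftrightarrow(4)$, and recall that in characteristic $0$ a unipotent group is automatically connected.

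Next I would handle the two algebro-geometric implications giving $(2)\Leftrightarrow(3)$ and $(4)\Rightarrow(3)$. For $(4)\Rightarrow(3)$: a unipotent group $U$ in characteristic $0$ has a central series with all quotients $\mathbb{G}_a$, and every extension of $\mathbb{A}^i$ by $\mathbb{G}_a$ is trivial as a variety (via the exponential), so $U\cong\mathbb{A}^n$ as a variety and $\mathcal{O}(U)\cong k[x_1,\dots,x_n]$ as an algebra. The implication $(3)\Rightarrow(2)$ is immediate from the standard grading on a polynomial ring. For the reverse $(2)\Rightarrow(3)$ I would argue by regularity: a connected graded algebra has no nontrivial idempotents (inspect the degree-zero part and filter), so $G$ is connected, hence irreducible, hence $H$ is a domain as $G$ is smooth. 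Writing $\mathfrak{m}=H_{\ge1}$ for the irrelevant ideal, the corresponding point is a smooth point of $G$, so $H_{\mathfrak m}$ is regular of dimension $n:=\dim H$ and the number of minimal homogeneous generators is $\dim_k\mathfrak{m}/\mathfrak{m}^2=n$; a surjection $k[x_1,\dots,x_n]\twoheadrightarrow H$ onto a domain of Krull dimension $n$ has prime kernel of height $0$, hence is an isomorphism.

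It remains to close the loop, which I would do by proving $(3)\Rightarrow(4)$: a polynomial coordinate ring forces $G$ unipotent. By Rosenlicht's theorem the units of $\mathcal{O}(G)$ modulo $k^\times$ form the character group $X(G)$; as a polynomial ring has only constant units, $X(G)=0$. Using the Levi decomposition $G\cong L\ltimes R_u(G)$ with $L$ reductive, vanishing of $X(G)$ forces $L$ to be semisimple, and since $G\cong L\times\mathbb{A}^m$ as varieties, $\mathcal{O}(L)$ is again a polynomial ring. The entire difficulty is concentrated here, and I expect this to be the main obstacle: I must rule out a nontrivial connected semisimple $L$, i.e.\ show that such an $L$ cannot be isomorphic to affine space. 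The elementary invariants (units, $\operatorname{Pic}$, being a UFD) do \emph{not} suffice, since a simply connected semisimple group such as $SL_2$ has trivial units, trivial Picard group, and factorial coordinate ring. I would resolve it with a cohomological invariant: a nontrivial semisimple group has $H^{3}_{\mathrm{dR}}(L)\neq 0$ (its algebraic de Rham cohomology is an exterior algebra on generators of odd degree $\ge 3$), whereas affine space has trivial de Rham cohomology by the algebraic Poincaré lemma; equivalently, over $\mathbb{C}$ one invokes $H^{3}(L(\mathbb{C});\mathbb{Z})\neq 0$. Hence $L=1$ and $G$ is unipotent.

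Finally, the dimension formula is bookkeeping once the equivalences are in hand. When the conditions hold, $\GKdim H=\GKdim k[x_1,\dots,x_n]=n$, this coincides with the Krull dimension $\dim U$ of $\mathcal{O}(U)$, and $\gldim k[x_1,\dots,x_n]=n$ by the Hilbert syzygy theorem, giving $n=\GKdim H=\dim U=\gldim H$.
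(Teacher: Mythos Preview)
Your argument is essentially correct and complete, but it follows a rather different route from the paper, which simply assembles citations: the paper gets $(2)\Rightarrow(3)$ by observing that $H$ has finite global dimension (from \cite{Wat}) and then invoking Serre's theorem that a connected graded commutative ring of finite global dimension is polynomial; it gets $(3)\Rightarrow(4)$ directly from Lazard's theorem \cite{Laz}; and $(1)\Leftrightarrow(4)$, $(4)\Rightarrow(3)$ are quoted as standard facts about algebraic groups. Your treatment of $(1)\Leftrightarrow(4)$ and $(4)\Rightarrow(3)$ is the same underlying mathematics made explicit. Your $(2)\Rightarrow(3)$ via smoothness and a dimension count is a perfectly good substitute for the appeal to Serre's theorem, and arguably more self-contained. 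The real divergence is at $(3)\Rightarrow(4)$: the paper delegates this entirely to Lazard, while you reconstruct it through Levi decomposition and a cohomological obstruction. Your approach has the virtue of being transparent about \emph{why} a semisimple group cannot be affine space; the paper's has the virtue of brevity.

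There is one genuine slip in your $(3)\Rightarrow(4)$. From $G\cong L\times\mathbb{A}^m$ as varieties and $G\cong\mathbb{A}^n$, you assert that ``$\mathcal{O}(L)$ is again a polynomial ring''. That deduction is the Zariski cancellation problem, which is open in characteristic $0$ for $\dim L\ge 3$, so you cannot use it. Fortunately you do not need it: apply K\"unneth and the algebraic Poincar\'e lemma directly to $G$ to obtain
\[
k \;=\; H^{\ast}_{\mathrm{dR}}(\mathbb{A}^n) \;\cong\; H^{\ast}_{\mathrm{dR}}(G) \;\cong\; H^{\ast}_{\mathrm{dR}}(L)\otimes H^{\ast}_{\mathrm{dR}}(\mathbb{A}^m) \;\cong\; H^{\ast}_{\mathrm{dR}}(L),
\]
and then your observation that $H^{3}_{\mathrm{dR}}(L)\neq 0$ for nontrivial semisimple $L$ forces $L=1$ without ever claiming $L$ is affine space. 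With that rephrasing the argument is clean.
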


Here $(2)\Rightarrow (3)$ follows since $H$ has finite global
dimension by \cite[combining \S11.4 and \S11.6]{Wat}, so a
theorem of Serre applies \cite[Theorem 6.2.2]{Be};
$(3) \Rightarrow (4)$ is a theorem of Lazard \cite{Laz},
$(3)\Rightarrow(2)$ is trivial, and the remaining implications
$(1) \Leftrightarrow (4)$ and $(4) \Rightarrow (3)$ are basic
facts concerning algebraic $k$-groups, see for example
\cite[Example 3.1]{BO}.
\footnote{Connected graded Hopf algebras have been studied
by Milnor-Moore in \cite{MM} and F{\'e}lix-Halperin-Thomas in
\cite{FHT1, FHT2} (assuming that both the coalgebra \emph{and}
the algebra structure are graded). Dualizing complexes
and homological properties of locally finite ${\mathbb N}$-graded Hopf
algebras have been studied in \cite{WZ1}.}

Summarising this classical picture: if $H$ is a cocommutative
or commutative Hopf algebra with finite Gel'fand-Kirillov
dimension $n$ and is connected graded as an algebra,
then it is a connected coalgebra, (and hence, as an algebra,
it is the enveloping algebra of an $n$-dimensional nilpotent
Lie algebra); and the converse (connected coalgebra implies
connected graded as an algebra) holds in the commutative case
but not the cocommutative one.

\subsection{Connected graded algebras}
\label{yysec0.3}
The first part of the paper, $\S\S$ \ref{yysec1} and
\ref{yysec2}, concerns a Hopf algebra that is connected
graded as an algebra. Our first main result gives structural
information for connected graded Hopf algebras of finite
GK-dimension. It shows that, while we cannot replicate
the cocommutative conclusions, many of the algebraic and
homological consequences of being isomorphic to the
enveloping algebra of a nilpotent Lie algebra do survive:

\begin{theorem}
\label{yythm0.2}
Let $H$ be a Hopf $k$-algebra with $\mathrm{GKdim}H = n
< \infty$ which is connected graded as an algebra. Then
$H$ is a noetherian domain with $S^2 = \mathrm{Id}_H$.
Furthermore, it is Cohen-Macaulay, Artin-Schelter regular and
Auslander regular of global dimension $n$, and is
Calabi-Yau. As a consequence, $H$ satisfies Poincar{\'e} duality.
\end{theorem}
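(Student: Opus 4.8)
The plan is to reduce to the situation where $H$ is connected as a coalgebra, and then to transport all the desired properties from the associated graded of the coradical filtration. Granting the reduction, the coradical filtration $\{H_i\}_{i\ge 0}$ of a connected Hopf algebra is an exhaustive, separated Hopf algebra filtration with $H_0=k$, and a theorem valid in characteristic zero shows that the associated graded Hopf algebra $\gr H$ is commutative. Thus $\gr H$ is a connected graded commutative Hopf algebra of Gel'fand--Kirichlet dimension $n$; being a commutative Hopf domain of finite $\GKdim$ it is the coordinate ring of an algebraic group and hence affine, so Theorem \ref{yythm0.1} applies and identifies $\gr H$, as an algebra, with the polynomial ring $k[x_1,\dots,x_n]$ (the generators being the images of a basis of the primitives $P(H)=H_1/H_0$). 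Every ring-theoretic and homological assertion about $H$ will then be obtained by lifting the corresponding property of $k[x_1,\dots,x_n]$ along this filtration.

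The main obstacle is the reduction itself, namely that connectedness of $H$ as a \emph{graded algebra} forces connectedness as a \emph{coalgebra}. Since $k$ is algebraically closed, $H$ is pointed and $H_0=kG(H)$, so the task is to show that the only group-like element is $1$. Each group-like $g$ is a unit of $H$, with inverse $S(g)$, and satisfies $\epsilon(g)=1$; writing $g=g_0+\dots+g_p$ and $S(g)=h_0+\dots+h_q$ in homogeneous components, the relation $gS(g)=1$ forces the top component $g_ph_q$ to vanish. If $H$ is a domain this gives $p=q=0$, so $g\in k^\times$, and then $\Delta(g)=g\otimes g$ together with $\epsilon(g)=1$ yields $g=1$. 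The real crux is therefore to rule out nontrivial group-likes (equivalently, units outside $k^\times$) using only the connected graded Hopf structure and $\GKdim H=n<\infty$: this is where the two notions of connectedness interact, and it is the step I expect to require genuine work, since finite Gel'fand--Kirichlet dimension must be played off against the existence of a nontrivial unit.

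Once $\gr H\cong k[x_1,\dots,x_n]$ is established, the remaining conclusions lift routinely. The polynomial ring is a noetherian domain, is Auslander regular and Cohen--Macaulay, and is Artin--Schelter regular of global dimension $n$ with $\GKdim=n$; by the standard filtered-to-graded transfer results, $\gr H$ noetherian gives $H$ noetherian, $\gr H$ a domain gives $H$ a domain, and $\gldim H\le\gldim\gr H=n$, with equality because the trivial module $k$ has a length-$n$ projective resolution lifted from the Koszul resolution over $\gr H$. The Auslander and Cohen--Macaulay conditions pass from $\gr H$ to $H$ by Bj{\"o}rk-type theorems, and the Artin--Schelter--Gorenstein condition likewise transfers from the graded algebra $\gr H$ to the filtered algebra $H$; combined with $\gldim H=n$ this makes $H$ Artin--Schelter regular of global dimension $n$.

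Finally, $S^2=\mathrm{Id}_H$ follows because $S^2$ is a Hopf algebra automorphism fixing $H_0=k$ and inducing the identity on $\gr H$, where the primitive generators obey $S(x_i)=-x_i$; hence $S^2$ fixes $P(H)=H_1/H_0$, and since $\gr H$ is generated in coradical degree $1$ the algebra $H$ is generated by $H_1$, forcing $S^2=\mathrm{Id}_H$. For the Calabi--Yau property I would invoke the bimodule structure of a noetherian Artin--Schelter regular Hopf algebra: its Nakayama automorphism is the composite of $S^2$ with the winding automorphism of the left homological integral $\int^{\,l}_H$. This integral is trivial, since the unimodular associated graded $k[x_1,\dots,x_n]$ has trivial Artin--Schelter--Gorenstein character, so with $S^2=\mathrm{Id}_H$ the Nakayama automorphism is the identity and $H$ is Calabi--Yau of dimension $n$. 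Poincar{\'e} duality is then immediate, being the defining symmetry that $\Ext^i_{H^e}(H,H^e)$ vanishes for $i\ne n$ and is isomorphic to $H$ as a bimodule when $i=n$.
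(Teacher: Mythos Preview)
Your reduction step---showing that a connected graded Hopf algebra is connected as a coalgebra---is precisely Question~\ref{yyque0.4} of the paper, which is left \emph{open}. Your proposed route to it is also flawed: the claim ``since $k$ is algebraically closed, $H$ is pointed'' is false in general (it holds for cocommutative $H$, but e.g.\ $\mathcal{O}(SL_2(k))$ is not pointed). Moreover, your unit argument for $G(H)=\{1\}$ presupposes that $H$ is a domain, which is part of what you are trying to prove. So the whole architecture rests on an unproved (and possibly hard) implication.

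The paper sidesteps this completely by using a different filtration. After arranging via a winding automorphism that $\ker\epsilon=\fm=\bigoplus_{i\ge 1}H(i)$, it takes the \emph{$\fm$-adic} (standard graded) filtration rather than the coradical one. The key input is \cite[Proposition~3.4]{GZ}: $\gr_{\fm}H\cong U(\mathfrak{g})$ for a graded Lie algebra $\mathfrak{g}$ with $\dim_k\mathfrak{g}\le\GKdim H=n$, so $\mathfrak{g}$ is finite-dimensional and nilpotent. From there the noetherian, domain, Auslander regular, Cohen--Macaulay, and Artin--Schelter regular conclusions lift via the filtered-to-graded machinery of Proposition~\ref{yypro1.4}, much as you outlined---but with $U(\mathfrak{g})$ in place of $k[x_1,\dots,x_n]$.

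Your argument for $S^2=\mathrm{Id}_H$ also breaks down even granting the reduction: $\gr_c H$ is a polynomial ring but is \emph{not} in general generated in coradical degree~$1$ (the example $L$ in \S\ref{yysec5} has signature $(1,1,1,2,2)$), so $H$ need not be generated by $H_1$. The paper instead argues as follows: the left integral $\int^{\ell}_H=\Ext^n_H(k,H)$ is a one-dimensional \emph{graded} $H$-bimodule, hence trivial, so by \cite[Theorem~0.3]{BZ} the Nakayama automorphism $\mu_H$ equals $S^2$; \cite[Theorem~0.6]{BZ} gives $S^4=\mathrm{Id}_H$. To get from $S^4=\mathrm{Id}$ to $S^2=\mathrm{Id}$, the paper compares $\mu_H$, $\mu_{\gr H}$, and $\mu_R$ (for the Rees ring $R$) via \cite[Lemma~1.5]{RRZ}: since $\mu_{\gr H}=\mathrm{Id}$ (as $\gr H$ is the enveloping algebra of a nilpotent Lie algebra), a graded lifting argument through $R$ forces $\mu_H=\mathrm{Id}$ as well.
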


Theorem \ref{yythm0.2} is Theorem \ref{yythm2.2} below. There 
is a result similar to Theorem \ref{yythm0.2} for connected 
Hopf algebras, see \cite[Theorem 6.9 and Corollary 6.10]{Zh}.
In \cite[Question C]{Br1} the first-named author
asks: {\it Is there an easy method to
recognize when a noetherian Hopf algebra is regular, namely,
has finite global dimension?} Theorem \ref{yythm0.2}
answers \cite[Question C]{Br1} (as well as \cite[Question E]{Br2}
and \cite[Question 3.5]{Go2}) in the case of connected graded algebras.
Faced with the above list of ring-theoretic and homological
properties and considering the cocommutative and commutative
cases discussed in $\S$\ref{yysec0.2}, one might speculate
that every Hopf algebra $H$ as in Theorem \ref{yythm0.2}
is in fact isomorphic \emph{as an algebra} to the enveloping
algebra of a nilpotent Lie algebra. But this is not so -
see Theorem \ref{yythm0.5}(2) below.

Notwithstanding this example, under some additional
hypotheses only enveloping algebras can occur in the
circumstances of Theorem \ref{yythm0.2}. Two such cases
appear as parts (2) and (4) of the following result, which
is also Theorem \ref{yythm2.4}. The graded Hopf algebras
studied by F{\'e}lix-Halperin-Thomas
in \cite{FHT1, FHT2} are in the super sense, which means that
one needs to use the Koszul sign rule when commuting two
homogeneous symbols. We expect that there is a version of
Theorem \ref{yythm0.2} for such Hopf algebras.

\begin{theorem}
\label{yythm0.3}
Let $H$ be a Hopf $k$-algebra which is connected graded and locally
finite as an algebra.
\begin{enumerate}
\item[(1)]
The Hilbert series of $H$ is
$$\frac{1}{\prod_{i=1}^\infty (1-t^i)^{n_i}}$$
for some non-negative integers $n_i$. Further, the sequence
$\{n_i\}_{i\geq 1}$ is uniquely determined by the Hilbert
series of $H$.
\item[(2)]
Suppose $H$ is generated in degree 1. Then $H$ is
isomorphic as an algebra to $U({\mathfrak g})$ for a graded Lie algebra
${\mathfrak g}$ generated in degree 1. Hence, for all $i \geq 1$,
$$\dim_k \mathfrak{g}_i = n_i,$$
where $\mathfrak{g}_i$ is the $i^{\mathit{th}}$ degree component
of ${\mathfrak g}$.
\item[(3)] Suppose again that $H$ is generated in degree 1, $H \neq k$. Then
$$ \{j: n_j \neq 0 \}  = [1,\ell) \cap \mathbb{N},
\textit{ for some } \ell \in [2, \infty]. $$
\item[(4)]
If the Hilbert series of $H$ is $\frac{1}{(1-t)^d}$, then $H$
is commutative, and hence $H \cong k[x_1, \dots , x_d]$ as algebras.
\item[(5)]
$\GKdim H < \infty$ if and only if $\sum_{i=1}^{\infty} n_i<\infty$.
In this case, $$\GKdim H=\sum_{i=1}^{\infty} n_i.$$
\end{enumerate}
\end{theorem}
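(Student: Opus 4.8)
The entire theorem should fall out once the algebra grading on $H$ is reconciled with its coalgebra structure, and reconciling them is where I expect the real work to lie. Write $H=k\oplus H^{+}$ with $H^{+}=\bigoplus_{i\ge 1}H(i)$, let $S$ be the antipode, and let $\{H_{n}\}_{n\ge 0}$ denote the coradical filtration. The plan begins with a reduction lemma asserting two things: first, that the only grouplike element of $H$ is $1$, so that $H$ is connected as a coalgebra ($H_{0}=k$); and second, that $\Delta$ does not raise degree, i.e. $\Delta(H(n))\subseteq\bigoplus_{p+q\le n}H(p)\otimes H(q)$, equivalently that $\Delta$ respects the filtration $F_{n}=\bigoplus_{i\le n}H(i)$. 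The first point is short: a grouplike $g$ is a unit with inverse $S(g)$, so comparing leading terms in $g\,S(g)=1$ confines $g$ to degree $0$, and then $\epsilon(g)=1$ gives $g=1$. The second point is the \emph{main obstacle}: since $\Delta$ is an algebra map it suffices to test degree non-increase on algebra generators, and a genuine degree-raising top component of $\Delta$ is incompatible with the counit and antipode axioms—already visible in rank one, where a term $x\otimes x$ in $\Delta x$ would make $1+\lambda x$ grouplike and hence a non-existent unit. Granting degree non-increase, the top-degree part $\Delta_{\mathrm{top}}$ of $\Delta$ is again coassociative and counital and turns the \emph{same} graded algebra $H$ into a connected Hopf algebra with graded coproduct, and with the same Hilbert series; so I may assume henceforth that $H$ is graded as a Hopf algebra, in particular that each $H_{n}$ is a graded subspace.

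For (1), I would pass to the associated graded ring $A=\gr H$ of the coradical filtration, a connected graded (in the algebra grading) Hopf algebra. In characteristic $0$ the ring $A$ is commutative, since the commutator lowers coradical degree and the usual obstructions vanish; a commutative connected graded Hopf algebra over a field of characteristic $0$ is a polynomial ring by Leray's theorem (the infinite-variable extension of Theorem \ref{yythm0.1}). Writing $n_{i}$ for the number of polynomial generators of algebra-degree $i$, the PBW monomials give $H_{A}(t)=\prod_{i\ge 1}(1-t^{i})^{-n_{i}}$, and since the coradical filtration is graded we have $H_{H}(t)=H_{A}(t)$. Uniqueness of $\{n_{i}\}$ is then immediate: from $1/H_{H}(t)=\prod_{i}(1-t^{i})^{n_{i}}$ the integers $n_{i}$ are determined recursively on $i$.

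Now assume $H$ is generated in degree $1$. Since the coproduct is graded and $\epsilon$ vanishes on $H^{+}$, each $x\in H(1)$ satisfies $\Delta x=x\otimes 1+1\otimes x$, so $H(1)\subseteq P(H)$; as $H(1)$ generates, $H$ is cocommutative, and Cartier--Kostant gives $H\cong U(\fg)$ as algebras with $\fg=P(H)$ a graded Lie algebra generated in degree $1$. Comparing PBW series with (1) yields $\dim_{k}\fg_{i}=n_{i}$, which is (2). For (3), generation in degree $1$ means $\fg_{i}=[\fg_{1},\fg_{i-1}]$ for $i\ge 2$, so $\fg_{i}=0$ forces $\fg_{i+1}=0$; hence the support $\{i:\fg_{i}\neq 0\}=\{i:n_{i}\neq 0\}$ is an initial segment $[1,\ell)\cap\NN$, with $\ell\ge 2$ because $H\neq k$ gives $\fg_{1}\neq 0$. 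For (4), if $H_{H}(t)=(1-t)^{-d}$ then uniqueness in (1) gives $n_{1}=d$ and $n_{i}=0$ for $i\ge 2$; set $V=H(1)$, a space of primitives of dimension $d$. The sub-Hopf-algebra $\langle V\rangle$ is cocommutative, so equals $U(L)$ with $L$ the Lie subalgebra generated by $V$ and $L_{1}=V$; by PBW, $\dim_{k}\langle V\rangle(2)=\binom{d+1}{2}+\dim_{k}[V,V]$, while $\dim_{k}H(2)=\binom{d+1}{2}$, forcing $[V,V]=0$. Thus $L=V$ is abelian, $\langle V\rangle=k[x_{1},\dots,x_{d}]$, and comparison of Hilbert series gives $\langle V\rangle=H$, so $H\cong k[x_{1},\dots,x_{d}]$ is commutative.

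Finally (5) is read off the Hilbert series. The polynomial ring $A=\gr H$ has exactly $\sum_{i}n_{i}$ homogeneous generators, so if $\sum_{i}n_{i}=N<\infty$ then $A$ is affine with $\GKdim A=N$, and since the coradical filtration has finite-dimensional layers one gets $\GKdim H=\GKdim A=N$; concretely, $\sum_{m\le M}\dim_{k}H(m)$ grows like a polynomial in $M$ of degree exactly $N$. If instead $\sum_{i}n_{i}=\infty$ then $H$ needs infinitely many algebra generators and its growth is super-polynomial, whence $\GKdim H=\infty$. The one genuinely delicate ingredient throughout is the degree non-increase of $\Delta$ in the reduction lemma; everything after that is PBW bookkeeping together with the commutative and cocommutative structure theorems.
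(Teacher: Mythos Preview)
Your reduction lemma is where the argument founders, and both of its claims are problematic. The implication ``the only grouplike is $1$, so $H_{0}=k$'' requires $H$ to be \emph{pointed}, which over an algebraically closed field is automatic only in the cocommutative case; whether a Hopf algebra that is connected graded as an algebra must be connected as a coalgebra is precisely the open Question~\ref{yyque0.4}. Your second claim, that $\Delta$ does not raise total degree, is likewise unproven: the sketch you offer treats only a single degree-$1$ generator, and gives no mechanism for generators of higher degree (which occur whenever $H$ is not generated in degree~$1$; see Remark~\ref{yyrem2.5}(1)). In effect you are assuming that $H$ can be upgraded to a \emph{graded} Hopf algebra, which is strictly stronger than the hypothesis and not known in general. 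Everything downstream---your use of the coradical filtration in (1), the primitivity of $H(1)$ in (2), the Hilbert-series comparison in (4)---rests on this unproven reduction.

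The paper bypasses both difficulties. After arranging $\ker\epsilon=\fm=\bigoplus_{i\ge1}H(i)$ via Lemma~\ref{yylem2.1}(2), it filters $H$ not by degree but by powers of $\fm$. The counit axiom alone gives $\Delta(\fm)\subseteq H\otimes\fm+\fm\otimes H$, so the $\fm$-adic filtration is \emph{automatically} a Hopf algebra filtration regardless of how $\Delta$ interacts with the Adams grading; and since $\fm$ is itself graded, $\gr_{\fm}H$ carries the Adams grading and has the same Hilbert series as $H$. Then \cite[Proposition~3.4(a)]{GZ} yields $\gr_{\fm}H\cong U(\mathfrak{g})$ for a graded Lie algebra $\mathfrak{g}$, and PBW gives~(1). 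For (2) and (4), when $H$ is generated in degree~$1$ one has $H\cong\gr_{\fm}H$ outright by Lemma~\ref{yylem1.2}, so no appeal to cocommutativity or Cartier--Kostant is needed; (3) and (5) then follow as in your outline.
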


Both Theorems \ref{yythm0.2} and \ref{yythm0.3} can be used
to show that a number of large classes of connected graded
$k$-algebras \emph{do not} admit any Hopf algebra structure
- see Corollaries \ref{yycor2.3} and \ref{yycor2.8}.

\subsection{Connected Hopf algebras}
\label{yysec0.4}
A central underlying theme of this paper is the following.
Let $H$ be a Hopf $k$-algebra, say of finite
Gel'fand-Kirillov dimension. What is the relationship
between the following two hypotheses on $H$?
\begin{align*}
\mathrm{(CGA)}:
& \quad \textit{Hopf algebra $H$ is connected graded as an algebra.}\\
\mathrm{(CCA)}:
& \quad \textit{Hopf algebra $H$ is connected as a coalgebra.}
\end{align*}
It is immediately clear that (CCA)$\nRightarrow$(CGA) - consider
any enveloping algebra $H$ of a finite dimensional non-nilpotent
Lie algebra, with its standard cocommutative coalgebra structure.
It remains feasible, however, that (CGA)$\Rightarrow$(CCA); indeed,
all the results discussed in $\S\S$\ref{yysec0.2} and \ref{yysec0.3}
are consistent with this possibility, which we therefore leave
as an open question:

\begin{question}
\label{yyque0.4} Does $\mathrm{(CGA)}\Rightarrow\mathrm{(CCA)}$?
\end{question}

Note that if $H$ satisfies $\mathrm{(CGA)}$, then its graded
dual is a Hopf algebra satisfying $\mathrm{(CCA)}$.

A further possibility suggested by the results of
$\S\S$\ref{yysec0.2} and \ref{yysec0.3}, already mentioned in
$\S$\ref{yysec0.3}, is that \emph{if $H$ satisfies} (CGA)
\emph{and/or} (CCA) \emph{then} $H$ \emph{is isomorphic as an
algebra to the enveloping algebra of a finite dimensional Lie
algebra.} However, the second part of the following theorem 
rules this out. It is
proved as Theorem \ref{yythm5.6},  the main result of $\S$5.
Recall that a \emph{graded Hopf algebra} $H$ is by definition
simultaneously graded as both an algebra and as a coalgebra:
$H = \oplus_{i \geq 0} H(i)$ as a graded algebra, with
$\Delta (x) \in \sum_{i=1}^m H(i)
\otimes H(m-i)$ for all $x \in H(m)$, for all $m \geq 0$ and
the antipode being a graded map
\cite[Definition 1, p.878]{Zh}. Such a graded Hopf algebra is then
\emph{connected}, in \emph{both} the coalgebra and the algebra
sense of the word, if $H(0) = k$.

\begin{theorem}
\label{yythm0.5}
Let $H$ be a Hopf $k$-algebra with $\GKdim H = n < \infty$.
\begin{enumerate}
\item[(1)]
If $H$ satisfies $\mathrm{(CCA)}$ and $n \leq 4$ then $H$
is isomorphic as an algebra to the enveloping algebra of
an $n$-dimensional Lie algebra.
\item[(2)]
There exists an algebra $H$ with $n=5$, satisfying both
$\mathrm{(CCA)}$ and $\mathrm{(CGA)}$, which is not
isomorphic to the enveloping algebra of a Lie algebra.
\item[(3)] The algebra $H$ of part {\rm{(2)}} is a
connected graded Hopf algebra.
\end{enumerate}
\end{theorem}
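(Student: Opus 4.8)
The three parts are of quite different character, so I would treat them separately, putting almost all of the effort into parts (2) and (3), which carry the real content. For part (1) the plan is to exploit the coradical filtration. Since $H$ satisfies (CCA) it is a connected coalgebra, so the coradical filtration $\{H_i\}$ is an exhaustive algebra filtration and $\gr H$ is a connected graded Hopf algebra of the same GK-dimension $n$. In characteristic zero the infinitesimal braiding on the primitives is the ordinary flip, so $\gr H$ is commutative, and by the commutative structure theory underlying Theorem \ref{yythm0.1} it is a polynomial ring $k[x_1,\dots,x_n]$ with the $x_j$ in coradical degrees $d_j\ge 1$. For $n\le 4$ the possible degree patterns, together with the deformation data relating $H$ to $\gr H$, form a short finite list; running through them (here I would lean on the existing classifications of connected Hopf algebras of GK-dimension $\le 3$ and a direct degree-$4$ analysis) one checks case by case that a lift of a basis of $P(H)$ generates $H$ and that the canonical map $U(P(H))\to H$ is an isomorphism of \emph{algebras}. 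The one point needing care is that this algebra isomorphism need not respect the coalgebra, so I would phrase the conclusion purely algebraically: $H\cong U(\fg)$ as algebras, with $\dim_k\fg=n$, via the existence of an $n$-dimensional Lie subalgebra of $H$ (under the commutator) that generates $H$ with the PBW property.

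For parts (2) and (3) the plan is to write down one explicit Hopf algebra and verify everything by hand. I would present $H$ by generators and homogeneous relations, placing two generators in degree $1$ and introducing at least one further \emph{essential} generator in higher degree, so that $H$ is not generated in degree $1$; the comultiplication is defined on generators so that the degree-$1$ generators are primitive while a higher generator is only ``almost primitive'', its coproduct carrying a non-symmetric correction term of lower filtration degree. The verifications are then: (i) $\Delta$ and $\epsilon$ extend to algebra maps (check they annihilate the defining relations) and an antipode exists, so $H$ is a Hopf algebra; (ii) the grading is a Hopf-algebra grading with $H(0)=k$, which gives (CGA) and, with (iii), the ``connected graded Hopf algebra'' assertion of part (3); (iii) the coradical is $k$, obtained by showing the coradical filtration refines the degree filtration, giving (CCA); and (iv) a PBW-type basis exhibiting the Hilbert series $1/\prod_i(1-t^i)^{n_i}$ with $\sum_i n_i=5$, whence $\GKdim H=5$ by Theorem \ref{yythm0.3}(5). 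Because the coproduct has been chosen non-symmetric, $H$ is not cocommutative, so $H$ is not an enveloping algebra as a \emph{Hopf} algebra; the real work is to exclude it as an \emph{algebra}. Note that once the construction is arranged so that $\Delta$ is a graded map, part (3) is essentially built into the construction of part (2).

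The main obstacle is precisely this last point: showing $H\not\cong U(\fg)$ for \emph{every} Lie algebra $\fg$. By the reformulation from part (1), such an isomorphism (necessarily with $\dim\fg=\GKdim H=5$) exists if and only if $H$ contains a $5$-dimensional subspace $V\subseteq \fm:=\bigoplus_{i\ge1}H(i)$ with $[V,V]\subseteq V$ for which the canonical map $U(V)\to H$ is an isomorphism. I would rule this out by a leading-term argument. For $0\ne v\in\fm$ write $v=v_{(a)}+v_{(a+1)}+\cdots$ with $v_{(a)}\ne 0$ its lowest homogeneous component $\mathrm{lt}(v)$; then $\mathrm{lt}([v,w])=[\mathrm{lt}(v),\mathrm{lt}(w)]$ whenever the right-hand side is nonzero. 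Since $[H(1),H(1)]\subseteq H(1)\cdot H(1)\subseteq\fm^2$, brackets of elements of $V$ whose leading terms lie in degree $1$ have leading terms lying in $\fm^2$, hence contribute nothing to the essential generators of $H$ in that degree; to produce the higher-degree essential generators, $V$ must itself contain elements with leading terms in those degrees, and $[V,V]\subseteq V$ then forces the homogeneous leading terms to span a graded Lie subalgebra $\bar\fg$ giving a graded enveloping structure on $H$. The crux is that $H$ is not isomorphic as a graded algebra to $U(\bar\fg)$ for any graded Lie algebra $\bar\fg$: the defining relations of $H$ have been chosen genuinely non-Lie (non-antisymmetric) in the relevant degree, so the commutators forced by $[V,V]\subseteq V$ cannot close up inside any such $\bar\fg$ while still reproducing the Hilbert series of Theorem \ref{yythm0.3}(1). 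This incompatibility is a finite but delicate computation with the relations, and it is where I expect essentially all of the difficulty to lie; the remaining assertions of parts (2) and (3) are bookkeeping built on the structural results of Theorems \ref{yythm0.2} and \ref{yythm0.3}.
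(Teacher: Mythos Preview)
Your plan for part (1) matches the paper's: both simply defer to the classification of connected Hopf $k$-algebras of GK-dimension at most $4$ carried out in \cite{Wa1}. Your construction plan for parts (2) and (3) is also on target in spirit. The paper's example $L$ has generators $a,b$ in degree $1$, $c$ in degree $2$, and $z,w$ in degree $3$, with relations $[a,b]=c$, $[z,w]=\tfrac{1}{3}c^3$, and all other brackets zero; $a,b,c$ are primitive while $\Delta(z)$ and $\Delta(w)$ carry the non-symmetric correction terms $a\otimes c-c\otimes a$ and $b\otimes c-c\otimes b$. With these degrees $L$ is a connected graded Hopf algebra, and connectedness of the coalgebra follows from a coalgebra-filtration argument just as you outline.

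Where your proposal genuinely diverges from the paper is in the argument that $L\not\cong U(\mathfrak{g})$. You propose a leading-term reduction: if $V\subseteq\mathfrak{m}$ is a $5$-dimensional Lie subalgebra with $U(V)\to L$ an isomorphism, pass to the graded Lie algebra $\bar{\mathfrak{g}}=\gr V$ of leading terms and derive a contradiction from the graded relations. This can in fact be made to work for the paper's $L$: one checks that $\bar{\mathfrak{g}}$ is forced to be $\mathrm{span}\{a,b\}\oplus\mathrm{span}\{c\}\oplus\mathrm{span}\{z',w'\}$ in degrees $1,2,3$, that $[a,z']=[b,z']=0$ forces $z',w'\in\mathrm{span}\{z,w\}$, and then $[z',w']$ is a nonzero multiple of $c^3$, contradicting $\bar{\mathfrak{g}}_6=0$. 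But the step you gloss over---that $U(\bar{\mathfrak{g}})\cong L$ as \emph{graded} algebras---needs an argument (it uses that $L$ is a domain, so leading terms of PBW monomials in $V$ are the corresponding PBW monomials in $\bar{\mathfrak{g}}$). Your phrase ``non-antisymmetric'' is also misleading: the relation $[z,w]=\tfrac{1}{3}c^3$ is perfectly antisymmetric; the obstruction is that $c^3$ cannot lie in a $5$-dimensional graded Lie subalgebra concentrated in degrees $\le 3$.

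The paper takes a quite different and more elementary route, making no use of the grading. From $L_{\mathrm{ab}}\cong k[a,b,z,w]$ and $Z(L)=k[c]$ it deduces that any candidate $\mathfrak{n}$ with $L\cong U(\mathfrak{n})$ must satisfy $\dim[\mathfrak{n},\mathfrak{n}]=\dim Z(\mathfrak{n})=1$, hence be nilpotent of class $2$; there is a \emph{unique} such $5$-dimensional Lie algebra up to isomorphism. The paper then distinguishes $L$ from $U(\mathfrak{n})$ by a single finite-dimensional invariant: writing $\mathfrak{m}=\ker\epsilon$, one computes $\dim_k Z(\mathfrak{m}/\mathfrak{m}^3)=13$ for $L$ but $11$ for $U(\mathfrak{n})$. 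Your graded approach, once the filtered-to-graded passage is made precise, arguably explains \emph{why} the example works; the paper's invariant is blunter but requires no such passage and is insensitive to any choice of grading.
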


Theorem \ref{yythm0.5}(1) follows easily from the
classification in \cite{Wa1} of all $H$ with $\GKdim H
\leq 4$ satisfying $\mathrm{(CCA)}$. The analogue of
Theorem \ref{yythm0.5}(1) for the $\mathrm{(CGA)}$
hypothesis remains open; obviously it would follow from a
positive answer to Question \ref{yyque0.4} for $n \leq 4$.
Note that Theorem \ref{yythm0.5}(2) gives a negative answer to
\cite[Question L]{BG1}. Theorem \ref{yythm0.5}(2),(3) appear 
as Theorem \ref{yythm5.6} (and Lemma \ref{yylem5.5}) below.

Let $\gr_c H$ denote the associated
graded algebra of $H$ with respect to the coradical filtration
of $H$. The \emph{signature} featuring in Corollary
\ref{yycor0.6}(4) below is a list of the
degrees of the homogeneous generators of $\gr_c H$.
It is defined and discussed in \cite[Definition 5.3]{BG2}.

\begin{corollary}
\label{yycor0.6}
Let $H$ be as in Theorem {\rm{\ref{yythm0.5}(2),(3)}}.
Then the following hold.
\begin{enumerate}
\item[(1)]
There is a connected locally finite
${\mathbb N}$-filtration of $H$, namely, the
coradical filtration $c$, such that the associated graded ring
$\gr_c H$ is isomorphic to $k[x_1,\cdots,x_5]$ where $\deg x_i\geq 1$
{\rm{(}}but not all equal 1{\rm{)}}.
\item[(2)]
There does not exist a connected locally finite ${\mathbb N}$-filtration
${\mathcal F}$ such that
$\gr_{\mathcal F} H\cong k[x_1,\cdots,x_5]$ with $\deg x_i=1$ for all $i$.
\item[(3)]
There is a negative ${\mathbb N}$-filtration ${\mathcal F}$
of $H$ such that the associated graded ring
$\gr_{\mathcal F} H\cong U({\mathfrak g})$ for some
5-dimensional graded Lie algebra ${\mathfrak g}$.
However $H$ itself is not isomorphic to $U({\mathfrak g})$
for any Lie algebra ${\mathfrak g}$.
\item[(4)]
The signature of $H$ is $(1,1,1,2,2)$.
\item[(5)]
Let $D$ be the graded $k$-linear dual of the Hopf algebra
$H$ with respect to the graded Hopf algebra structure of
Theorem {\rm{\ref{yythm0.5}(3)}}. Thus $D$ is a connected
graded Hopf algebra, affine noetherian of Gel'fand-Kirillov
dimension 5. However, $D$ is not isomorphic,
as a coalgebra, to $\gr_c K$ for any connected Hopf algebra
$K$. In particular, $D$ is not isomorphic to $\gr_c D$
as coalgebras.
\end{enumerate}
\end{corollary}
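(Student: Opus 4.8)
The plan is to read off all five statements from the explicit Hopf algebra $H$ produced in Theorem \ref{yythm0.5}(2),(3), feeding it into the general results already available. Two facts will be used throughout. The first is structural: for a connected Hopf algebra over a field of characteristic zero, the associated graded $\gr_c$ of the coradical filtration is a \emph{commutative} connected graded Hopf algebra (see \cite{Zh}), and so, if it has finite Gel'fand--Kirillov dimension, it is a polynomial algebra by the commutative theory of Theorem \ref{yythm0.1}. The second is the negative conclusion of Theorem \ref{yythm0.5}(2): $H$ is not isomorphic as an algebra to $U(\mathfrak{g})$ for any Lie algebra $\mathfrak{g}$. This single fact will drive every impossibility claim in (1)--(5).

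For (1) and (4) I would work from the presentation of $H$ recorded in the proof of Theorem \ref{yythm5.6}. Since $H$ satisfies (CCA) with $\GKdim H = 5$, the first fact makes $\gr_c H$ a commutative connected graded Hopf algebra of dimension $5$, hence $\gr_c H \cong k[x_1,\dots,x_5]$ by Theorem \ref{yythm0.1}; the coradical filtration is connected and locally finite because $H$ has finite Gel'fand--Kirillov dimension, which gives (1) once the degrees are known. The degrees are exactly the signature, so for (4) I would compute the primitive space, finding $\dim_k P(H)=3$ (these account for the degree-$1$ generators of $\gr_c H$), and then check that precisely two further homogeneous generators are required, both in coradical degree $2$; this yields the signature $(1,1,1,2,2)$ and completes (1). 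The clause ``not all equal to $1$'' also follows formally from the second fact: if $\gr_c H$ were generated in degree $1$, then $H$ would be generated by $P(H)$, hence a quotient of $U(P(H))$, and a Hilbert series comparison would force $H\cong U(P(H))$, contradicting Theorem \ref{yythm0.5}(2).

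The same mechanism proves (2). Suppose a connected locally finite ${\mathbb N}$-filtration $\mathcal F$ gave $\gr_{\mathcal F} H\cong k[x_1,\dots,x_5]$ generated in degree $1$, and set $V=\mathcal F_1\cap\ker\varepsilon$, a $5$-dimensional space. Commutativity of $\gr_{\mathcal F} H$ forces $ab-ba\in\mathcal F_1$ for $a,b\in\mathcal F_1$, while $\varepsilon$ being an algebra map gives $\varepsilon([a,b])=0$; hence $[V,V]\subseteq V$ and $V$ is a $5$-dimensional Lie algebra generating $H$. The resulting filtered surjection $U(V)\to H$ induces a surjection $S(V)\to\gr_{\mathcal F} H$ of polynomial rings with the common Hilbert series $1/(1-t)^5$, so it is an isomorphism; therefore $H\cong U(V)$, contradicting Theorem \ref{yythm0.5}(2). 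For (3) I would instead assign integer weights (some negative, explaining the term ``negative'' filtration) to the generators of $H$ so that the leading parts of its defining relations become Lie brackets, and confirm by a PBW/Diamond-lemma count that the associated graded is $U(\mathfrak{g})$ for a $5$-dimensional graded Lie algebra $\mathfrak{g}$; the non-isomorphism $H\not\cong U(\mathfrak{g})$ is again Theorem \ref{yythm0.5}(2).

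Finally, for (5): as the graded dual of the connected graded Hopf algebra $H$ of Theorem \ref{yythm0.5}(3), $D$ is itself connected graded Hopf with the same Hilbert series as $H$, so $\GKdim D=5$ by Theorem \ref{yythm0.3}(5), $D$ is an affine noetherian domain by Theorem \ref{yythm0.2}, and $D$ satisfies (CCA) by the remark after Question \ref{yyque0.4}. For the impossibility, suppose $D\cong\gr_c K$ as coalgebras for some connected Hopf algebra $K$. As $\gr_c K$ is coradically graded, so is $D$, whence $D\cong\gr_c D$ as coalgebras and, by the first fact, $\gr_c D$ is commutative. Passing to graded duals and using the standard duality between the coradical filtration of a connected coalgebra and the $\mathfrak{m}$-adic (augmentation) filtration of its dual algebra, coradical-gradedness of $D$ furnishes an algebra grading on $H$ splitting its augmentation filtration, that is, an algebra isomorphism $H\cong\gr_{\mathfrak{m}}H$ where $\mathfrak{m}=\ker\varepsilon$. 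But $\gr_{\mathfrak{m}}H$ is a connected graded Hopf algebra generated in degree $1$, so $\gr_{\mathfrak{m}}H\cong U(\mathfrak{g})$ by Theorem \ref{yythm0.3}(2); hence $H\cong U(\mathfrak{g})$, contradicting Theorem \ref{yythm0.5}(2). Taking $K=D$ (legitimate since $D$ satisfies (CCA)) gives the ``in particular'' clause. I expect the real work to lie not in these dualities but in the explicit computations behind (3) and (4) --- extracting the coradical structure and a usable weight filtration from the presentation of $H$ --- and, within (5), in carefully matching the coalgebra isomorphism with the graded duality so as to produce the augmentation-splitting grading on $H$; that matching is the main obstacle.
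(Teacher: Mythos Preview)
Your treatment of (1)--(4) matches the paper's: part (2) is precisely the content of Lemma~\ref{yylem5.9}, part (3) is the standard $\mathfrak{m}$-adic filtration handled by \cite[Proposition~3.4(a)]{GZ} (not an ad hoc weight assignment, but your remark is in the right spirit), and (1),(4) come from \cite{Zh} and the explicit coradical computation in Lemma~\ref{yylem5.5}(3).

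The genuine gap is in (5). The coalgebra isomorphism $D\cong\gr_c K$ equips $D$ with a \emph{new} grading $D=\bigoplus D[i]$ making it coradically graded, but the graded dual of $D$ with respect to this grading is \emph{not} $H$; it is $T:=(\gr_c K)^{\gr*}$, and it is $T$, not $H$, that your duality argument shows to be isomorphic to its own $\gr_{\mathfrak m}$. The only algebra to which both gradings of $D$ dualise in common is the full dual $D^*$, and via \eqref{E5.12.1} one gets $\widehat{H}=D^*\cong(\gr_c K)^*\cong\widehat{U(\mathfrak g)}$ for a positively graded $\mathfrak g$ --- an isomorphism of \emph{completions}, not of $H$ and $U(\mathfrak g)$ themselves. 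There is no general descent from $\widehat{H}\cong\widehat{U(\mathfrak g)}$ to $H\cong U(\mathfrak g)$.

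This is exactly why the paper does not try to force an augmentation-splitting grading on $H$. Instead it proves a completed analogue of Theorem~\ref{yythm5.6}(3), namely Proposition~\ref{yypro5.12}(6): $\widehat{H}\not\cong\widehat{U(\mathfrak g)}$ for any positively graded $\mathfrak g$, by transporting the finite-dimensional centre comparison of Proposition~\ref{yypro5.7} to the skew power series setting. Your proposal needs either that extra proposition or an independent argument producing a genuine algebra grading on $H$ compatible with the new coalgebra grading on $D$ --- and the latter is not available, since the isomorphism $D\cong\gr_c K$ is only coalgebraic.
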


Corollary \ref{yycor0.6} is proved in $\S$5.5. Combining 
Theorem \ref{yythm0.5}(2) with Corollary \ref{yycor0.6}(4),
one sees that a connected Hopf algebra with signature $(1,1,1,2,2)$
need not be isomorphic to an enveloping algebra of an
anti-cocommutative coassociative Lie algebra. This answers negatively
a question posed rather imprecisely in \cite[Remark 2.10(d1)]{Wa1}.

Despite the example from Theorem \ref{yythm0.5}(2), the evidence
so far assembled, in $\S$$\S$\ref{yysec1} and \ref{yysec2} of the
present paper for $\mathrm{(CGA)}$ and in previous work
\cite{EG, Zh, Wa1, BO, BG2} for $\mathrm{(CCA)}$, supports the
intuition that Hopf algebras of finite Gel'fand-Kirillov dimension
satisfying either of these hypotheses share many properties
with enveloping algebras of finite dimensional $k$-Lie algebras
$\mathfrak{g}$, (with $\mathfrak{g}$ graded and hence nilpotent
in the case of $\mathrm{(CGA)}$). We provide some further
evidence in support of this philosophy, by generalising in
Theorem \ref{yythm4.5} a well-known result on enveloping
algebras of Latysev and Passman, \cite{Lat, Pas}, and also
providing a version of it for connected graded algebras,
Theorem \ref{yythm2.9}:

\begin{theorem}
\label{yythm0.7}
Let $H$ be a Hopf $k$-algebra {\rm{(}}but don't assume that
$\GKdim H$ is finite{\rm{)}}. If $\mathrm{(CGA)}$ or
$\mathrm{(CCA)}$ holds for $H$, and $H$ satisfies a polynomial
identity, then $H$ is commutative, {\rm{(}}so Theorem
{\rm{\ref{yythm0.1}}} applies{\rm{)}}.
\end{theorem}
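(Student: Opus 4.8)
The plan is to treat the two hypotheses in parallel, using the classical Latyshev--Passman theorem \cite{Lat,Pas} as the engine at the ``bottom'' of $H$ and an induction to carry commutativity upward. Under $\mathrm{(CCA)}$ I would first reduce to finite Gel'fand--Kirillov dimension: commutativity is tested on pairs of elements, every finite subset lies in a finitely generated sub-Hopf-algebra, and such a subalgebra is again connected as a coalgebra and still satisfies the identity; so $H$ is the directed union of finitely generated connected Hopf subalgebras and it suffices to treat those, where the structure theory for connected Hopf algebras \cite{Zh} makes $H$ a noetherian domain. Under $\mathrm{(CGA)}$ I would instead keep the grading, using Theorem~\ref{yythm0.2} to get that $H$ is a noetherian domain and Theorem~\ref{yythm0.3} (whose graded local-finiteness hypothesis the homogeneous testing of commutativity lets me assume) to control the Hilbert series. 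In both cases $H$ is a PI noetherian domain, so Posner's theorem applies and ``$H$ commutative'' becomes equivalent to ``$H$ has PI-degree $1$''.

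The core of the argument is to locate the noncommutativity and kill it with the identity. The space of primitives $\mathfrak g = P(H)$ is a Lie algebra, and in characteristic $0$ the canonical map $U(\mathfrak g)\to H$ is injective; since a subalgebra of a PI algebra is PI, $U(\mathfrak g)$ is PI, whence $\mathfrak g$ is abelian by Latyshev--Passman, so the primitives already generate a commutative polynomial subalgebra. I would then propagate commutativity outward: under $\mathrm{(CCA)}$ along the coradical filtration, whose associated graded $\gr_c H$ is a connected graded Hopf algebra (graded in both senses) inheriting the same identity; under $\mathrm{(CGA)}$ by appealing to Theorem~\ref{yythm0.3}, which pins the Hilbert series down as $1/\prod_i(1-t^i)^{n_i}$ and, once one shows $n_i=0$ for $i\ge 2$, forces it to equal $1/(1-t)^d$ and hence (by Theorem~\ref{yythm0.3}(4)) forces $H\cong k[x_1,\dots,x_d]$.

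The hard part is exactly this propagation, and it is where the polynomial identity must be used quantitatively rather than merely on the primitives. The subtlety is that ``$\gr_c H$ commutative'' does \emph{not} imply ``$H$ commutative'': the enveloping algebra $U(\mathfrak g)$ of a non-abelian nilpotent Lie algebra has commutative associated graded $S(\mathfrak g)$ yet is itself noncommutative, and it is precisely excluded here because it fails to be PI. Thus the commutator induces a nontrivial Poisson-type bracket on the commutative graded object, and the whole force of the hypothesis must go into showing that the identity makes this bracket vanish; equivalently, in the $\mathrm{(CGA)}$ formulation, into ruling out homogeneous generators of degree $\ge 2$. I expect this step -- converting the PI hypothesis on $H$ into vanishing of the induced bracket (respectively into $n_i=0$ for $i\ge 2$), rather than deducing only that the primitives are abelian -- to be the main obstacle, and I would attack it by induction on coradical degree (respectively generator degree), at each stage expressing the relevant commutator through coproducts of strictly lower degree and invoking a minimal-degree multilinear identity to force it to zero.
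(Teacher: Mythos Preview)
Your framework for $\mathrm{(CCA)}$ is the right one and matches the paper: reduce to the affine case, use Latyshev--Passman to get $P(H)$ abelian, then climb the coradical filtration. What you are missing is the actual mechanism for the inductive step. The paper does \emph{not} invoke a multilinear identity directly; instead it uses a Weyl-algebra trick. The coalgebra computation you allude to (``expressing the commutator through coproducts of lower degree'') shows precisely that $z:=[x,y]$ is \emph{primitive}. Since $P(H)$ is abelian, Lemma~\ref{yylem4.4} says the nonzero monomials in primitives form an Ore set, so one can localise; then $z$ commutes with $x$, and from $xy-yx=z$ one gets $x(z^{-1}y)-(z^{-1}y)x=1$ in the localisation, embedding the first Weyl algebra and contradicting PI. Your proposal to ``invoke a minimal-degree multilinear identity to force it to zero'' is exactly the step that needs this idea; without it the induction does not close.

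For $\mathrm{(CGA)}$ your route is more roundabout than necessary, and your diagnosis of the obstacle is slightly off. You invoke Theorem~\ref{yythm0.2}, which needs $\GKdim H<\infty$ (explicitly \emph{not} assumed here), and you propose to show $n_i=0$ for $i\ge 2$ in the Hilbert series, but give no mechanism for that. The paper's argument (Theorem~\ref{yythm2.9}) is a two-line reduction: pass to the $\fm$-adic associated graded $\gr_{\mathcal F}H\cong U(\mathfrak g)$ via \cite[Proposition~3.4(a)]{GZ}; PI passes to $\gr_{\mathcal F}H$, so Passman gives $\mathfrak g$ abelian; then \cite[Lemma~3.5]{GZ} says that commutativity of $\gr_{\mathcal F}H$ \emph{does} lift back to $H$. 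Your counterexample ``$U(\mathfrak g)$ nilpotent has commutative associated graded $S(\mathfrak g)$'' concerns the PBW filtration, not the $\fm$-adic one; with respect to the $\fm$-adic filtration the associated graded of a non-abelian $U(\mathfrak g)$ is again non-abelian, so the obstruction you worried about does not arise for this filtration.
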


\subsection{Further results}
\label{yysec0.5}
It turns out that the arguments employed to prove Theorem
\ref{yythm0.7} can be used, without any connectedness
hypothesis, to give insight into another type of weak
commutativity hypothesis on a Hopf algebra. We take a
brief detour to follow this line, proving, as Proposition \ref{yypro3.1}:

\begin{theorem}
\label{yythm0.8}
Let $H$ be a Hopf $k$-algebra with $\GKdim H = n < \infty$,
and let $\mathfrak{m}$ be the kernel of the counit of $H$.
\begin{enumerate}
\item[(1)]
$\dim \fm/\fm^2 \leq \GKdim H$.
\item[(2)]
Suppose that $H$ is affine or noetherian.
Then the following statements are equivalent:
\begin{enumerate}
\item[(a)]
$\dim \fm/\fm^2 = \GKdim H$.
\item[(b)]
$\mathfrak{m}$ contains a unique minimal prime ideal, $P$, 
and $H/P \cong \mathcal{O}(G)$ for a {\rm{(}}connected{\rm{)}}
algebraic group $G$ of dimension $n$.
\item[(c)]
$H$ has a commutative factor algebra $A$ with $\GKdim A = n$.
\end{enumerate}
\end{enumerate}
\end{theorem}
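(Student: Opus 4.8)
The plan is to prove the statements in the order (1), then $\mathrm{(c)}\Leftrightarrow\mathrm{(a)}$, then $\mathrm{(a)}\Rightarrow\mathrm{(b)}$, with $\mathrm{(b)}\Rightarrow\mathrm{(c)}$ immediate (take $A=H/P$). The central object throughout is the associated graded ring $\gr_{\fm}H=\bigoplus_{i\ge0}\fm^i/\fm^{i+1}$ for the $\fm$-adic filtration. Since $\epsilon$ and $S$ are filtered and $\Delta(\fm^i)\subseteq\sum_{a+b=i}\fm^a\otimes\fm^b$, this is a connected graded bialgebra, generated in degree $1$ by $V:=\fm/\fm^2$. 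To prove (1) I want $\dim_k V\le n$. I would pick finitely many $x_1,\dots,x_r\in\fm$ whose images in $V$ are linearly independent, set $A=k\langle x_1,\dots,x_r\rangle\subseteq H$, and let $B'\subseteq\gr_{\fm}H$ be the graded subalgebra generated by their degree-$1$ symbols $\bar x_1,\dots,\bar x_r$. In a connected graded bialgebra degree-$1$ elements are primitive, so in characteristic $0$ the subalgebra $B'$ they generate is the enveloping algebra $U(\fg')$ of the Lie algebra $\fg'$ they generate, whence $\GKdim B'=\dim_k\fg'\ge r$ by PBW. On the other hand $B'$ is the associated graded of $A$ for the filtration $A\cap\fm^i$, so $\sum_{i\le N}\dim_k B'_i=\dim_k A/(A\cap\fm^{N+1})\le\dim_k W^N$ with $W=k+\sum_i kx_i$, giving $\GKdim B'\le\GKdim A\le\GKdim H=n$. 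Thus $r\le n$ for every such $r$, and $\dim_k V\le n$.

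For $\mathrm{(c)}\Leftrightarrow\mathrm{(a)}$ I would pass to the abelianization $H^{\mathrm{ab}}=H/C$, where $C$ is the commutator ideal. Since $C\subseteq\fm^2$, the augmentation ideal $\overline{\fm}$ of $H^{\mathrm{ab}}$ satisfies $\dim_k\overline{\fm}/\overline{\fm}{}^2=\dim_k\fm/\fm^2$. As $H$ is affine or noetherian, $H^{\mathrm{ab}}$ is a commutative affine Hopf algebra (Molnar), so $H^{\mathrm{ab}}\cong\calO(G)$ for an algebraic group $G$, which is smooth in characteristic $0$; hence $\dim G=\dim_k\overline{\fm}/\overline{\fm}{}^2=\dim_k\fm/\fm^2$. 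Then $\mathrm{(a)}\Rightarrow\mathrm{(c)}$ follows by taking $A=\calO(G)$, and $\mathrm{(c)}\Rightarrow\mathrm{(a)}$ follows because any commutative factor $A$ is automatically a factor of $H^{\mathrm{ab}}=\calO(G)$, so $n=\GKdim A\le\dim G=\dim_k\fm/\fm^2\le\GKdim H=n$ by (1), with equality forcing (a).

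For $\mathrm{(a)}\Rightarrow\mathrm{(b)}$ the first task is to identify $\gr_{\fm}H$. Under (a) the bound in (1) becomes an equality, so $\gr_{\fm}H$ is locally finite, generated in degree $1$, of GK-dimension $n$; Theorem \ref{yythm0.3}(2),(5) then presents it as $U(\fg)$ with $\sum_i\dim_k\fg_i=n=\dim_k\fg_1$, so $\fg$ is concentrated in degree $1$ and abelian, i.e.\ $\gr_{\fm}H\cong k[x_1,\dots,x_n]$ is a commutative polynomial domain. I would then let $G^\circ$ be the identity component of $G$ and set $P=\ker\bigl(H\to\calO(G)\to\calO(G^\circ)\bigr)$, so that $H/P\cong\calO(G^\circ)$ is a domain of dimension $n$ with $P\subseteq\fm$ and $G^\circ$ connected. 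Comparing associated graded rings (the surjection $\gr_{\fm}H\to\gr_{\fm}(H/P)$ is an isomorphism in degree $1$, hence an isomorphism of polynomial rings) together with the $\overline{\fm}$-adic separatedness of $\calO(G^\circ)$ then identifies $P$ with $\bigcap_i\fm^i$.

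The main obstacle is the remaining assertion in (b): that $P$ is a minimal prime and the unique minimal prime contained in $\fm$. The plan is to exploit that $\gr_{\fm}H\cong k[x_1,\dots,x_n]$ is a domain. This already makes $P=\bigcap_i\fm^i$ prime with $H/P$ a domain. For minimality, a prime $Q\subsetneq P$ would again satisfy $\gr_{\fm}(H/Q)\cong k[x_1,\dots,x_n]$ (since $Q\subseteq\fm^i$ for all $i$), yet $P/Q\subseteq\bigcap_i\overline{\fm}{}^i$ would be nonzero, contradicting the Krull intersection property, which I would derive from the associated graded ring being a commutative noetherian domain. For uniqueness, any minimal prime $Q\subseteq\fm$ has $\GKdim H/Q=n$ by equidimensionality of noetherian Hopf algebras; then the leading-term ideal of $Q$ in $\gr_{\fm}H$ has full GK-dimension and is therefore zero, so $Q\subseteq\bigcap_i\fm^i=P$, and minimality of both forces $Q=P$. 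The genuinely delicate points are justifying the Krull intersection and equidimensionality inputs in the stated generality; the affine-but-not-noetherian case in particular will need separate care.
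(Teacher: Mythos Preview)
Your treatment of (1) and of the equivalence (a)$\Leftrightarrow$(c) matches the paper's: both rest on $\gr_{\fm}H\cong U(\fg)$ (which is \cite[Proposition~3.4]{GZ}, whose proof you are essentially unpacking) and on passing to $H_{ab}\cong\mathcal{O}(G)$ with $\dim G=\dim_k\fm/\fm^2$ by smoothness in characteristic~$0$.

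The real divergence is in (a)$\Rightarrow$(b). The paper does not construct $P$ via the connected component of $G$ and then identify it with $\bigcap_i\fm^i$; it sets $P=J_{\fm}:=\bigcap_i\fm^i$ from the start. The $\fm$-adic filtration on $H/J_{\fm}$ is separated by construction, and since $\gr_{\fm}(H/J_{\fm})\cong U(\fg)$ is a domain, $H/J_{\fm}$ is a domain of GK-dimension $n$. The key step that replaces your detour---and the two delicate inputs you flag---is this: $H/J_{\fm}$ is itself a Hopf algebra by \cite[Lemma~4.7]{LWZ}, so $(H/J_{\fm})_{ab}\cong\mathcal{O}(G)$ with $\dim G=\dim_k\fm/\fm^2=n$; but $H/J_{\fm}$ is an Ore domain of GK-dimension $n$ by \cite[Theorem~4.12]{KL}, so \emph{every proper factor has strictly smaller GK-dimension} \cite[Proposition~3.15]{KL}. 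Hence $H/J_{\fm}=(H/J_{\fm})_{ab}=\mathcal{O}(G)$ and $G$ is connected. No equidimensionality of $H$ and no noncommutative Krull intersection are needed.

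Your proposed plan for minimality contains a genuine circularity. If $Q\subsetneq P=J_{\fm}$ is prime then $Q\subseteq\fm^i$ for every $i$, so in $H/Q$ one has $\bigcap_i(\fm/Q)^i=J_{\fm}/Q\neq 0$: the $\fm$-adic filtration on $H/Q$ is \emph{not} separated, and you therefore cannot derive a Krull-intersection statement from the associated graded ring being a commutative noetherian domain, since any such deduction already presupposes separatedness. Similarly, the equidimensionality you invoke for uniqueness is not available in the stated generality. (As an aside: your correct observation that $\gr_{\fm}H\cong k[x_1,\dots,x_n]$ under (a), combined with \cite[Lemma~3.5]{GZ}, would give yet another short proof that $H/J_{\fm}$ is commutative---but you did not exploit it that way.)
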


In another brief digression we follow up some work on Artin-Schelter
regular coalgebras in \cite{HT}. Unexplained terminology in the following
theorem is defined in $\S$\ref{yysec4.1}.

\begin{theorem}
\label{yythm0.9}
Let $H$ be a connected Hopf algebra of GK-dimension $n < \infty$.
Then the underlying coalgebra of $H$ is Artin-Schelter regular of global
dimension $n$.
\end{theorem}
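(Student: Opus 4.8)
The plan is to reduce everything to the coradically graded Hopf algebra $\gr_c H$, to read off its coalgebra Artin--Schelter regularity from the \emph{algebra} regularity of a graded dual via Theorem \ref{yythm0.2}, and then to transport the conclusion back to $H$ along the coradical filtration.

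First I would set up the reduction. Write $C$ for the coalgebra underlying $H$. Since $H$ is connected, the coradical filtration $H_0\subseteq H_1\subseteq\cdots$ is a connected, locally finite Hopf-algebra filtration (it is exactly the filtration appearing in Corollary \ref{yycor0.6}), so $A:=\gr_c H$ is a connected graded, locally finite Hopf algebra with $\GKdim A=n$; in particular $A$ satisfies $\mathrm{(CGA)}$. At this point one may record the known structural fact (for instance from \cite{Zh}) that $A$ is commutative, indeed $A\cong k[x_1,\dots,x_n]$ as an algebra, which makes the picture concrete (its graded dual is then $U(\mathfrak g)$ and the relevant cohomology is the exterior algebra $\Lambda\mathfrak g$); however, for the homological argument only the fact that $A$ is a connected graded locally finite Hopf algebra of GK-dimension $n$ is needed.

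Next I would use duality to promote the regularity of the \emph{algebra} $D:=A^{\circ}$ to regularity of the \emph{coalgebra} $A$. The graded dual $D$ is again a connected graded, locally finite Hopf algebra, and $\dim D(i)=\dim A(i)$ for all $i$, so $D$ has the same Hilbert series as $A$ and hence $\GKdim D=n$ by Theorem \ref{yythm0.3}(5). The comultiplication of $A$ is the transpose of the multiplication of $D$: under the identification of (locally finite graded) $A$-comodules with graded $D$-modules one gets $\operatorname{Cotor}^*_A(k,k)\cong\operatorname{Ext}^*_D(k,k)$, and more importantly the coalgebra-theoretic dualizing datum of $A$ is the transpose of the algebra-theoretic dualizing datum of $D$. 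Applying Theorem \ref{yythm0.2} to $D$ shows that $D$ is Artin--Schelter regular as an algebra of global dimension $n$; dualizing this, via the correspondence between AS-regular graded algebras and AS-regular graded coalgebras set up in \cite{HT}, yields that $A=\gr_c H$ is AS-regular as a coalgebra of global dimension $n$.

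Finally I would lift from $A$ back to $C$. Filtering the cobar complex of $C$ by the coradical filtration produces a spectral sequence with $E_1$-term $\operatorname{Cotor}^*_A(k,k)$ converging to $\operatorname{Cotor}^*_C(k,k)$; since the $E_1$-page is concentrated in cohomological degrees $0,\dots,n$, convergence forces the coalgebra global dimension of $C$ to be at most $n$, while the dualizing condition transported from $A$ keeps $\operatorname{Cotor}^n_C(k,k)$ nonzero, so the global dimension is exactly $n$ (consistent with $\GKdim C=n$), and the same filtered comparison carries the full Gorenstein/dualizing condition from $A$ to $C$, just as the algebra-level implication ``$\gr$ AS-regular $\Rightarrow$ AS-regular'' is proved. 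I expect the genuine obstacle to be precisely this last step: establishing convergence of the filtered cobar spectral sequence in the infinite-dimensional, non-graded comodule setting and checking that the \emph{dualizing} condition, not merely finiteness of global dimension, survives the passage from $\gr_c H$ to $H$. If \cite{HT} already supplies the implication ``$\gr_c C$ AS-regular $\Rightarrow C$ AS-regular'' for connected coalgebras, this step reduces to a citation; otherwise it is where the real work lies.
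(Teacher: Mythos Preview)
Your first two stages match the paper almost exactly: reduce to $A=\gr_c H$, pass to the graded dual $A^{\circ}$ (which the paper notes is $U(\mathfrak g)$ for a positively graded $n$-dimensional Lie algebra, since $A$ is commutative by \cite{Zh}), read off algebra AS-regularity of global dimension $n$ there, and then dualize back via \cite{HT} to get that $A$ is an artinian AS-regular coalgebra of global dimension $n$. So far so good.

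The divergence is in the last step, and your instinct that this is where the obstacle lies is correct. The paper does \emph{not} attempt a coalgebra-side filtered cobar spectral sequence. Instead it dualizes once more and works entirely on the algebra side, where the needed lifting machinery is already in the literature. Concretely: form the complete local algebra $H^{*}:=\varprojlim_i (H_i)^{*}$. Its $\mathfrak m$-adic associated graded is exactly the graded dual $K^{\circ}=(\gr_c H)^{\circ}\cong U(\mathfrak g)$. Since $K^{\circ}$ is noetherian, the $\mathfrak m$-adic filtration on $H^{*}$ is Zariskian; since $K^{\circ}$ is Auslander regular, so is $H^{*}$ by the standard Zariskian lifting theorems in \cite{LvO1,LvO2}. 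One-dimensionality of $\Ext^{n}_{K^{\circ}}(k,K^{\circ})$ then forces one-dimensionality of $\Ext^{n}_{H^{*}}(k,H^{*})$, and a local version of Levasseur's theorem gives that $H^{*}$ is AS-regular of global dimension $n$. Finally \cite[Proposition~2.3]{HT} translates AS-regularity of $H^{*}$ back into the coalgebra AS condition \eqref{E4.1.1} for $H$.

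So the key idea you were looking for is: rather than proving a coalgebra-side ``$\gr_c C$ AS-regular $\Rightarrow$ $C$ AS-regular'' lifting lemma, pass to the complete dual $H^{*}$ and use the already-established algebra-side Zariskian/Auslander machinery there, then invoke \cite{HT} once at the end. Your cobar approach is not wrong in spirit, but making the convergence and the Gorenstein condition survive would essentially amount to reproving those Zariskian results on the comodule side; the paper sidesteps this entirely.
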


\subsection{Theorem of Etingof and Gelaki}
\label{yysec0.6}
Connected Hopf algebras are studied under the name {\it coconnected
Hopf algebras} in \cite{EG}. In particular, \cite[Theorem 4.2]{EG} 
states that every connected Hopf $\mathbb{C}$-algebra is a deformation 
of a prounipotent proalgebraic groups
with a Poisson structure, and that this data classifies these objects. 
This leaves, of course, the problems of understanding the 
ring-theoretic and homological properties of these Hopf algebras, 
and of investigating the range of possibilities generated by this 
recipe. This paper is independent of the work in \cite{EG}, and it 
should be expected that further progress will follow from combining 
their result with the methods discussed here.

\subsection{}
\label{yysec0.7}
The paper is organized as follows. $\S$1 contains some preliminaries.
Hopf algebras that are connected graded as an algebra are studied in
$\S$2 and Theorems \ref{yythm0.2} and \ref{yythm0.3} are proved there.
Hopf algebras that are connected as a coalgebra are studied in $\S$4
and Theorem \ref{yythm0.9} is proved there. The (CGA)
[respectively, (CCA)] parts of Theorem \ref{yythm0.7} are proved
in $\S$\ref{yysec2.6} [resp. $\S$\ref{yysec4.2}]. The discussion
of the almost commutative Hopf algebras of Theorem \ref{yythm0.8}
is in $\S$3. $\S$5 is devoted to the proofs
of Theorem \ref{yythm0.5}(2) and Corollary \ref{yycor0.6}. Some
computational details of the proof of
Theorem \ref{yythm5.6} are relegated to the Appendix, $\S$6.

\subsection*{Acknowledgments}
Some of the research described in this paper will form part of
the second-named author's PhD thesis at the University of Glasgow,
funded by the Carnegie Trust for the Universities of Scotland.
The third-named author
was supported by the US National Science Foundation
(Nos. DMS--0855743 and DMS--1402863). Some work for this
paper was carried out at the BIRS Workshop on Nichols Algebras
and Their Interactions with Lie Theory, Hopf Algebras and
Tensor Categories, held in Banff, Canada, 6-10 September 2015.
The first and third-named authors thank BIRS for its very
supportive hospitality. We also thank Pavel Etingof and
Milen Yakimov for very helpful conversations at this meeting.

\bigskip

\section{Preliminaries}
\label{yysec1}

\subsection{General setup}
\label{yysec1.1}
Throughout let $k$ denote a base field, which we assume to be
algebraically closed and of characteristic $0$. (The algebraically closed
assumption can probably be dropped for most of our results, but
the restriction to characteristic 0 is undoubtedly needed for most
of them.) All algebraic
structures and unadorned tensor products $\otimes$ will be assumed to be over
the base field $k$. We omit the definitions of some standard homological
terms; these can be found in, for example, \cite{BZ, Le, LP, RRZ, WZ2}.
The Gel'fand-Kirillov dimension of a module $M$ over a $k$-algebra $A$ is
denoted $\GKdim M$; we use \cite{KL} as the standard reference for this.

\subsection{Hopf algebra notation}
\label{yysec1.2}
For a general Hopf algebra $H$, the usual notation $\Delta$, $\epsilon$
and $S$ is used to denote the coproduct, counit and antipode respectively.
Our standard reference for Hopf algebra material is \cite{Mo}; details of
the following, for instance, can be found in \cite[Chapter 5]{Mo}.
The \emph{coradical} filtration of $H$ is denoted by
$\{H_{i}\}_{i=0}^{\infty}$, where $H_{0}$ is the coradical of $H$, that is
the sum of all simple subcoalgebras of $H$, and we define inductively,
for all $i \geq 1$,
$$H_{i}:=\Delta^{-1}(H\otimes H_{i-1}+ H_{0}\otimes H).$$
As explained already in $\S$\ref{yysec0.1}, the Hopf algebra $H$ is
$\emph{connected}$ if $H_{0}=k$.

\subsection{Filtrations in the setting of graded algebras}
\label{yysec1.3}
We start with some material which is a slight variation of parts
of \cite{LvO1, LvO2}. Let $A=A(0)\oplus A(1)\oplus A(2)\oplus \cdots$
be an ${\mathbb N}$-graded $k$-algebra, which we shall assume unless
otherwise stated to be {\it connected} - that is, $A(0) = k$. We shall
also frequently assume that the grading of $A$ is {\it locally finite}
- that is, $\dim_k A(i)<\infty$ for all $i$. We shall call this given
grading of $A$ its {\it Adams grading}, with the degree of a homogeneous
element $w$ of $A$ denoted by $\deg_a w$. Then the {\it Hilbert series}
of $A$ is defined and denoted by
$$H_A(t)=\sum_{i=0}^{\infty} \dim_k A(i) \; t^i.$$
We shall always denote the graded maximal ideal $\oplus_{i\geq 1}A(i)$
by $\fm$, and write $d(A)$ for the minimal number of generators of $A$ as
an algebra, $d(A) \in \mathbb{N} \cup \{\infty\}$. The symbol
$A_{\mathrm{ab}}$ is used for the factor of $A$ by the ideal
$\langle [A,A] \rangle$. Note that $A_{\mathrm{ab}}$ inherits a grading
from $A$. We recall the following elementary facts.

\begin{lemma}
\label{yylem1.1}
Let $A = \oplus_{i \geq 0}A(i)$ be a connected graded locally finite
algebra, with $\fm = \sum_{i \geq 1}A(i)$.
\begin{enumerate}
\item[(1)]
$A$ is an affine algebra $\Longleftrightarrow$
$\dim_k \fm/\fm^2 =:d < \infty$. In this case, $d(A) = d$.
\item[(2)]
$d(A) = d(A_{\mathrm{ab}}).$
\end{enumerate}
\end{lemma}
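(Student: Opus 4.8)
The plan is to deduce both parts from a single graded Nakayama-type identity, namely that for a connected graded locally finite algebra $A$ with augmentation ideal $\fm=\bigoplus_{i\geq 1}A(i)$ one has $d(A)=\dim_k \fm/\fm^2$ as an element of $\NN\cup\{\infty\}$. The pivotal claim I would establish first is: \emph{a set of homogeneous elements of $\fm$ generates $A$ as a $k$-algebra if and only if its image spans $\fm/\fm^2$ over $k$.} The forward direction is immediate, since any product of length $\geq 2$ of elements of $\fm$ lies in $\fm^2$, so modulo $\fm^2$ only the linear terms of the chosen generators survive. For the converse I would argue by induction on the Adams degree: writing $V$ for the $k$-span of the given elements and $B$ for the subalgebra they generate, the hypothesis $\fm=V+\fm^2$ lets me show $\fm_n\subseteq B$ for every $n$, using that $(\fm^2)_n=\sum_{i+j=n,\,i,j\geq 1}\fm_i\fm_j$ involves only strictly smaller degrees, which lie in $B$ by the inductive hypothesis; since $V_n\subseteq B$ as well, we get $\fm_n\subseteq B$, whence $A=k\oplus\fm\subseteq B$.

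From this claim I would read off the identity $d(A)=\dim_k\fm/\fm^2$. A homogeneous lift of any $k$-basis of the graded space $\fm/\fm^2$ generates $A$ by the claim, giving $d(A)\leq\dim_k\fm/\fm^2$; conversely, for any (not necessarily homogeneous) generating set $\{y_j\}$ of minimal size, replacing each $y_j$ by its positive-degree part $y_j'\in\fm$ still generates $A$, and since $\fm$ is spanned by products of the $y_j'$, the images of the $y_j'$ alone span $\fm/\fm^2$, giving $\dim_k\fm/\fm^2\leq d(A)$. Part (1) is then immediate: $A$ is affine precisely when $d(A)<\infty$, which by the identity happens exactly when $d:=\dim_k\fm/\fm^2<\infty$, and in that case $d(A)=d$.

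For part (2) I would identify the augmentation ideal of $A_{\mathrm{ab}}$ and compare. Writing $I=\langle[A,A]\rangle$ and using that $A(0)=k$ is central, every commutator reduces to a commutator of positive-degree elements and so lies in $\fm^2$; hence $[A,A]\subseteq\fm^2$, and since $\fm^2$ is a two-sided ideal the ideal it generates satisfies $I\subseteq\fm^2\subseteq\fm$. Consequently the augmentation ideal of $A_{\mathrm{ab}}=A/I$ is $\overline{\fm}=\fm/I$ with square $\overline{\fm}^2=\fm^2/I$, so $\overline{\fm}/\overline{\fm}^2\cong\fm/\fm^2$. Applying the identity $d(\,\cdot\,)=\dim_k(\text{augmentation ideal})/(\text{its square})$ to both $A$ and $A_{\mathrm{ab}}$ then yields $d(A)=d(A_{\mathrm{ab}})$.

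The only place demanding care — the closest thing to an obstacle — is the graded Nakayama step together with its bookkeeping in the possibly infinite-dimensional case: one must check that passing to homogeneous components, and to positive-degree parts of arbitrary generators, neither destroys the generating property nor inflates the count past $\dim_k\fm/\fm^2$. Local finiteness and connectedness reduce each of these points to a finite, degree-by-degree verification, so no genuine difficulty arises, and the two parts follow formally once the identity $d(A)=\dim_k\fm/\fm^2$ is in hand.
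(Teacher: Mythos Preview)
Your argument is correct. The paper does not actually supply a proof of this lemma --- it is introduced as ``elementary facts'' which the authors ``recall'', and no argument is given --- so there is nothing to compare against. Your graded Nakayama route (show that homogeneous elements of $\fm$ generate $A$ iff their images span $\fm/\fm^2$, deduce $d(A)=\dim_k\fm/\fm^2\in\NN\cup\{\infty\}$, and for (2) observe $[A,A]\subseteq\fm^2$ so that $\overline{\fm}/\overline{\fm}^2\cong\fm/\fm^2$) is exactly the standard one and handles both the finite and infinite cases cleanly. One tiny remark: in your lower bound step you might note explicitly that $A_{\mathrm{ab}}$ is again connected graded locally finite (since $\langle[A,A]\rangle$ is a graded ideal contained in $\fm$), so that the identity $d(\,\cdot\,)=\dim_k(\text{aug.\ ideal}/\text{its square})$ genuinely applies to it; you use this implicitly, and it is immediate, but worth a half-line.
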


The definition of a {\it Zariskian filtration} is omitted; it can be
found in many places, for example \cite{LvO1, LvO2}. We consider a
special type of ``graded Zariskian'' filtration of a graded algebra
$A$ as specified in Lemma \ref{yylem1.1}.  A family of graded vector subspaces
$\mathcal{F}:=\{F_i A\subseteq A\mid i\in {\mathbb Z}\}$ of $A$ is called a
{\it graded filtration} of $A$ if the following hold:
\begin{enumerate}
\item[(F0)]
$1\in F_0 A$ and $F_{-1}A \subseteq \fm$;
\item[(F1)]
$F_i A\subseteq F_{i+1} A$ for all $i$;
\item[(F2)]
$\bigcap_{i} F_i A=\{0\}$ and $\bigcup_{i} F_i A=A$;
\item[(F3)]
$F_i A F_j A\subseteq F_{i+j} A$ for all $i,j$.
\end{enumerate}
Define the graded ring associated to $\mathcal{F}$ to be
$$\gr_{\mathcal{F}} A:=\bigoplus_{i=-\infty}^{\infty} F_{i}A/F_{i-1} A$$
and the Rees algebra associated to $\mathcal{F}$ to be
$$\mathrm{Rees}_{\mathcal{F}} A:=\bigoplus_{i=-\infty}^{\infty} r^i F_i A,$$
where $r$ is an indeterminate. If $x\in F_{n}A\setminus F_{n-1}A$, then
the associated element in $(\gr_{\mathcal F} A)_{n}$ is denoted by $\overline{x}$.
Note that both $\gr_{\mathcal{F}} A$ and
$\mathrm{Rees}_{\mathcal{F}} A$ are ${\mathbb Z}^2$-graded
where the first component is reserved for the Adams grading and the second
component is for the grading coming from the filtration, denoted by
$\deg_{\mathcal F}$. We define $\deg_a r:=0$ and $\deg_{\mathcal F} r:=1$, or
$\deg r:=(0,1)$. Now $\gr_{\mathcal{F}} A$ and $A$ have the same Hilbert
series when we consider the grading on $\gr_{\mathcal{F}} A$ induced by
the Adams grading of $A$. In particular, $\gr_{\mathcal{F}} A$ is
connected graded with respect to the Adams grading.

The {\it standard graded filtration} of $A$ is defined by
\begin{equation}
\label{E1.1.1}\tag{E1.1.1}
F_i A=\begin{cases} A & i\geq 0\\ \fm^{-i} & i<0.\end{cases}
\end{equation}
In this case the associated graded ring $\gr_{\mathcal{F}} A$ is isomorphic to
$\bigoplus_{i=0}^{\infty} \fm^i/\fm^{i+1}$. We record for future use the
following obvious lemma:

\begin{lemma}
\label{yylem1.2}
Let $A$ be a connected graded algebra generated in degree 1. Let
$\fm= \oplus_{i\geq 1}A(i)$ and let $\mathcal{F}$ be the standard filtration.
Then $\gr_{\mathcal{F}} A:= \bigoplus\fm^i/\fm^{i+1}$ is isomorphic to $A$.
\end{lemma}

Continuing with the above notation, define $\deg_a F_i A$ be
the the lowest Adams degree of a nonzero homogeneous element in $F_i A$.
A filtration $\mathcal{F} = \{F_i A : i \in \mathbb{Z} \}$ is called
{\it strict} if

\begin{enumerate}
\item[(F4)]
$\qquad \qquad \underline{\lim}_{n\to\infty} \frac{\deg_a F_{-n} A}{n}>0$.
\end{enumerate}
It is clear that the standard graded filtration is strict since
$\deg_a F_{-n} A\geq n$.

\begin{lemma}
\label{yylem1.3} Let $A$ be a connected graded algebra with
a graded filtration $\mathcal{F}= \{F_i A : i \in \mathbb{Z} \}$.
Let $B$ be the associated graded algebra
$\gr_{\mathcal{F}} A$.
\begin{enumerate}
\item[(1)]
If $B$ is a domain, so is $A$.
\item[(2)]
If $B$ is affine {\rm{(}}=finitely generated{\rm{)}} as an algebra, then
$\mathcal{F}$ is strict.
\end{enumerate}
\end{lemma}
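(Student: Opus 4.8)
The plan is to handle the two parts separately, with part (2) carrying the real content. For part (1) I would run the standard associated-graded lifting argument. First note that for any nonzero $a \in A$ the set $\{i : a \in F_iA\}$ is, by (F2) together with the monotonicity (F1), nonempty and bounded below, so it has a least element $n(a)$; then $\overline{a}$ is a nonzero homogeneous element of $(\gr_{\mathcal F}A)_{n(a)}$. Given nonzero $a,b$, put $n=n(a)$ and $m=n(b)$. By the very definition of the multiplication on $\gr_{\mathcal F}A$ (legitimate by (F3)), the product $\overline a\,\overline b$ is the class of $ab$ in $F_{n+m}A/F_{n+m-1}A$. Since $B=\gr_{\mathcal F}A$ is a domain and $\overline a,\overline b\neq 0$, this class is nonzero, so $ab\notin F_{n+m-1}A$ and in particular $ab\neq 0$. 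Hence $A$ is a domain, and no real obstacle arises.

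For part (2) I would exploit the bigrading of $B$ by $(\deg_a,\deg_{\mathcal F})$, writing $B_m := F_mA/F_{m-1}A$ for its $\deg_{\mathcal F}$-component. Since $B$ is affine, splitting a finite generating set into bihomogeneous components and discarding scalars yields finitely many bihomogeneous generators $\beta_1,\dots,\beta_s$ of bidegrees $(\alpha_j,\phi_j)$, with $\alpha_j=\deg_a\beta_j\ge 0$ and $\phi_j=\deg_{\mathcal F}\beta_j$. The step I expect to be the main obstacle is to prove that every generator has strictly positive Adams degree, $\alpha_j\ge 1$. This is exactly where (F0) is needed: from $1\in F_0A$ and $F_{-1}A\subseteq\fm$ one checks that the Adams-degree-$0$ part of $B$ lies entirely in $\deg_{\mathcal F}$-degree $0$ and equals $k$ (for $m<0$ we have $F_mA\subseteq\fm$, contributing nothing in Adams degree $0$; for $m\ge 1$ the scalars already sit in $F_{m-1}A$ and so vanish in $B_m$). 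Thus no non-scalar bihomogeneous element has Adams degree $0$, so $\alpha_j\ge 1$ for all $j$.

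It then remains to convert this into the linear lower bound demanded by (F4). Lifting and projecting identifies $\deg_a F_{-n}A$ with the least Adams degree occurring among nonzero Adams-homogeneous elements of the components $B_m$ with $m\le -n$, so it suffices to bound below the Adams degree of a bihomogeneous element of $\deg_{\mathcal F}$-degree $m<0$. Such an element is a sum of monomials in the $\beta_j$, and a monomial with exponent vector $(e_j)$ has $\deg_{\mathcal F}=\sum_j e_j\phi_j=m$ and $\deg_a=\sum_j e_j\alpha_j$. Put $\lambda=\min\{\alpha_j/|\phi_j|:\phi_j<0\}$, a strictly positive minimum over a finite set because each $\alpha_j\ge 1$; since the generators with $\phi_j\ge 0$ only increase $m$, a one-line convexity estimate gives $\sum_j e_j\alpha_j\ge \lambda\sum_{j:\,\phi_j<0}e_j|\phi_j|\ge \lambda|m|\ge \lambda n$. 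Hence $\deg_a F_{-n}A\ge \lambda n$ and $\underline{\lim}_{n\to\infty}\deg_a F_{-n}A/n\ge\lambda>0$, which is (F4). (If no $\phi_j$ is negative then $B_m=0$ for $m<0$, forcing $F_{-1}A=0$, and (F4) holds vacuously.) The only genuinely substantive points are the positivity $\alpha_j\ge 1$ and the finiteness giving $\lambda>0$; everything else is bookkeeping.
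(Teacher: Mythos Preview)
Your proof is correct and follows essentially the same strategy as the paper: use finitely many bihomogeneous generators of $B$, note they have strictly positive Adams degree, and deduce a linear lower bound $\deg_a F_{-n}A \ge cn$ by comparing the Adams and filtration degrees of a monomial in the generators. Your packaging is marginally cleaner---you justify $\alpha_j\ge 1$ explicitly via (F0) and connectedness (the paper simply asserts $\deg_a f_i>0$), and by identifying $\deg_a F_{-n}A$ directly with an infimum over the $B_m$ for $m\le -n$ you avoid the paper's Case~1/Case~2 split---but these are cosmetic differences, not a genuinely different argument.
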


\begin{proof} (1) This is standard.

(2) Assume that $B$ is generated by a finite set of homogeneous elements
$\{ \overline{f_i}\}_{i=1}^{w}$ where
$\{f_i\}$ are homogeneous elements in $A$ with $\deg_a f_i=a_i>0$ and
$f_i\in F_{b_i} A\setminus F_{b_i-1} A$. Let $\beta =\max\{ |b_i| \mid i=1,\cdots,w\}$
and $\alpha =\min\{a_i\mid i=1,\cdots, w\}$. We claim that
$\underline{\lim}_{n\to\infty} \frac{\deg_a F_{-n} A}{n}\geq \alpha/\beta >0$. As a consequence,
${\mathcal F}$ is strict.

Suppose $\deg_a F_{-n}A\neq \infty$ for some $n>0$.
Let $x\in F_{-n}A$ be a nonzero homogeneous element
such that $\deg_a x=\deg_a F_{-n}A$. We consider the following two
cases.

Case 1: $x\not\in F_{-n-1}A=F_{-(n+1)}A$. Then $\overline{x} \in B$, so we can write
$\overline{x}=p(\overline{f_1}, \cdots,
\overline{f_w})$ where $p$ is a noncommutative polynomial in $w$ variables.
Recalling that $\deg$ is the
bi-degree, this implies that $\deg \overline{x}=\deg (\overline{f_{i_1}}\cdots \overline{f_{i_m}})$
for some integers $1\leq i_s \leq w$ where $s=1,\cdots, m$. Thus we have
\begin{equation}
\label{E1.3.1}\tag{E1.3.1}
\deg_{\mathcal F} \overline{x}=-n=\sum_{s=1}^m \deg_{\mathcal F}
\overline{f_{i_s}},
\end{equation}
and
\begin{equation}
\label{E1.3.2}\tag{E1.3.2}
\deg_{a} \overline{x}=\sum_{s=1}^m \deg_{a}
\overline{f_{i_s}}.
\end{equation}

By \eqref{E1.3.1}, $n\leq \sum_{s=1}^m |\deg_{\mathcal F}
\overline{f_{i_s}}| \leq m \beta$; that is, $m\geq n/\beta$. By \eqref{E1.3.2}, we have
$$\deg_a x=\deg_a \overline{x}\geq m \alpha \geq \frac{n}{\beta} \alpha=(\alpha/\beta) n.$$

Case 2: $x\in F_{-(n+1)}A$. By (F2),
$\bigcap_{n>0} F_{-n} A=\{0\}$, so there is an $n_0>0$ such that
$x\in F_{-(n+n_0)}A\setminus F_{-(n+n_0+1)}A$. Note that $\deg_a x=
\deg_a F_{-(n+n_0)}A$. By Case 1, we have
$$\deg_a x\geq (\alpha/\beta) (n+n_0)\geq (\alpha/\beta) n.$$

Combining these two cases, we have
$\deg_a F_{-n}A=\deg_a x\geq (\alpha/\beta) n$ for all
$n>0$, proving the claim.
\end{proof}

\begin{proposition}
\label{yypro1.4} Let $A$ be a connected graded algebra with
a strict graded filtration $\mathcal{F}$. Let $B$ be the associated
graded algebra $\gr_{\mathcal{F}} A$.
\begin{enumerate}
\item[(1)]
$B$ is {\rm{(}}left{\rm{)}} noetherian if and only if
$\mathrm{Rees}_{\mathcal{F}} A$ is.
\item[(2)]
If $B$ is {\rm{(}}left{\rm{)}} noetherian, then so is $A$. In this case $A$ is
locally finite and affine.
\item[(3)]
If $B$ is noetherian Auslander Gorenstein and GK-Cohen-Macaulay, then
so is $A$.
\item[(4)]
If $B$ as in {\rm{(3)}} has finite global dimension, then so does $A$.
In this case
\begin{equation}
\label{E1.4.1}\tag{E1.4.1} \gldim A=\GKdim A = \GKdim B = \gldim B.
\end{equation}
\end{enumerate}
\end{proposition}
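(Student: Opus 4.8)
The plan is to pass everything to the Rees algebra $R := \mathrm{Rees}_{\mathcal{F}} A$ and to exploit its central indeterminate $r$. Since $\mathcal{F}$ is separated and exhaustive (F2), $r$ is a central \emph{regular} element of bidegree $(0,1)$, and one has the three standard identifications
$$R/rR \cong B, \qquad R/(r-1)R \cong A, \qquad R[r^{-1}] \cong A[r,r^{-1}].$$
The first two reduce every assertion about the pair $(A,B)$ to an assertion about $R$ and its quotients by the central elements $r$ and $r-1$. First I would record the grading picture: because $A$ is connected graded and $1 \in F_0 A$ while $F_{-1}A \subseteq \fm$ (F0), the Adams-degree-zero part of $R$ is exactly $k[r]$, and $R$ is connected $\mathbb{N}$-graded for the Adams grading. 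Strictness (F4) enters as the statement that, in each fixed Adams degree, the filtration degrees occurring in $R$ are bounded below (by a line of positive slope, exactly as in the proof of Lemma \ref{yylem1.3}); this is the finiteness that will replace $r$-adic completeness below.

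For (1), the implication $R$ noetherian $\Rightarrow B = R/rR$ noetherian is immediate. For the converse, suppose $B$ is noetherian. Being connected graded and noetherian, $B$ is affine, hence $B_{>0}/B_{>0}^2$ is finite-dimensional; lifting a finite homogeneous generating set of $B$ to $R$, I would show that $R$ is a finitely generated algebra over its Adams-degree-zero part $k[r]$. The one nontrivial point is a graded Nakayama argument over the central polynomial ring $k[r]$: strictness guarantees that each Adams-graded piece of $R_{>0}/R_{>0}^2$ is a \emph{bounded-below} graded $k[r]$-module whose reduction mod $r$ is finite-dimensional, and such a module is finitely generated over $k[r]$. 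Once $R$ is finitely generated over the noetherian ring $k[r]$, the graded Hilbert basis theorem gives that $R$ is noetherian. (Alternatively, strictness is precisely what makes $\mathcal{F}$ Zariskian, and one may quote the Zariskian transfer theorems of Li--Van Oystaeyen for $R$.) Part (2) then follows: $B$ noetherian gives $R$ noetherian by (1), and $A \cong R/(r-1)R$ is a noetherian quotient; moreover $R$ is affine, so $A$ is affine, and since $A$ and $B$ share the same Hilbert series while $B$ (connected graded noetherian) is locally finite, $A$ is locally finite as well.

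For (3) and (4) I would invoke the standard filtered--graded transfer results, now justified because $\mathcal{F}$ is strict (Zariskian). The Auslander--Gorenstein property, finite injective dimension, the GK--Cohen--Macaulay property, and finiteness of global dimension all pass from $B = \gr_{\mathcal{F}} A$ to $A$ (via the Rees ring, or directly by the Bj\"ork / Li--Van Oystaeyen transfer theorems), which gives (3) and the finiteness of $\gldim A$ in (4). For the numerical identity in (4) I would assemble the chain
$$\gldim A \le \gldim B = \GKdim B = \GKdim A \le \gldim A,$$
forcing equality throughout: the outer inequality $\gldim A \le \gldim \gr_{\mathcal{F}}A = \gldim B$ is the classical filtered bound; the middle equality $\gldim A = \GKdim A$ (and likewise for $B$) holds because a connected graded noetherian algebra that is Auslander--Gorenstein, GK--Cohen--Macaulay, and of finite global dimension has its global dimension equal to its GK-dimension; and $\GKdim A = \GKdim B$ because $A$ and $\gr_{\mathcal{F}}A$ have equal Hilbert series.

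I expect the main obstacle to be the forward direction of (1): lifting noetherianity through the central regular element $r$ when the filtration grading on $R$ is two-sided. The point is that $R$ is not $r$-adically complete, so one cannot simply quote the complete-local lifting lemma; instead one must use strictness (F4) to bound the filtration degrees in each Adams degree, reducing matters to graded Nakayama over $k[r]$. Verifying carefully that strictness delivers exactly the boundedness needed for this argument---equivalently, that a strict graded filtration is Zariskian---is the crux; the homological transfers in (3) and (4) are then routine applications of the established machinery.
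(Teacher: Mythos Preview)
Your overall architecture---pass to the Rees ring $R$, use $R/rR\cong B$ and $R/(r-1)R\cong A$, then transfer homological properties---is exactly the paper's strategy, and your treatment of (2), (3), (4) is essentially what the paper does (it cites \cite{Le} and \cite{LP} for the transfer results, and closes the chain of (in)equalities in (4) just as you propose).

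There is, however, a genuine gap in your argument for the hard direction of (1). Your graded Nakayama argument correctly shows that $R$ is finitely generated as a $k[r]$-\emph{algebra}, but the step ``finitely generated over the noetherian ring $k[r]$ $\Rightarrow$ $R$ noetherian by the graded Hilbert basis theorem'' is simply false for noncommutative rings: a finitely generated algebra over a noetherian base need not be noetherian. What you need is a lifting of noetherianity along the central regular element $r$, and that requires $r$ to have positive degree in a grading under which $R$ is connected graded. In your setup $r$ has Adams degree $0$, so the standard lifting lemma does not apply directly. Your fallback (``strictness is precisely what makes $\mathcal F$ Zariskian'') is close to circular, since Zariskian in the sense of \cite{LvO1} essentially \emph{means} that the Rees ring is noetherian.

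The trick the paper uses---and which you are very close to, since you already observe that strictness bounds the filtration degree below by a line of positive slope in the Adams degree---is to \emph{rescale} the Adams grading by a large enough integer $d$ so that $\deg_a F_{-n}A>n$ for all $n>0$, and then equip $R$ with the \emph{total} grading (rescaled Adams degree plus filtration degree). Under this total grading $R$ is genuinely connected $\mathbb{N}$-graded with $r$ a central regular element of degree $1$, and then \cite[Proposition~3.5(a)]{Le} gives $R$ noetherian $\Leftrightarrow R/rR\cong B$ noetherian in one stroke. In other words, strictness should be used not as input to a Nakayama argument over $k[r]$, but as the numerical condition that allows a regrading making $R$ connected graded with $\deg r=1$.
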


\begin{proof} (1) By (F4), there is a positive integer $d$ such that
$\deg_a F_{-n} A> (n+1)/d$ for all $n>0$. For the rest of the proof we
change the Adams grading of $A$ by setting a new Adams grading
$$\widetilde{\deg_a} x:=d\deg_a x$$  for a homogeneous element $x$.
Under the new Adams grading, we have $\deg_a F_{-n}A>n$ for all $n>0$.
Consider the Rees ring $R:=\bigoplus_{i=-\infty}^{\infty} r^{i} F_{i} A$
with the total grading. In particular, one has
$\deg r=1$. Then $R$ is a connected graded algebra and $r$ is a central regular
element in $R$ of degree 1. It is clear that $R/rR\cong B$. The assertion
follows from \cite[Proposition 3.5(a)]{Le}.

(2) Suppose that $B$ is left noetherian. As noted in the proof of (1), $R$ is connected
graded with respect to the total grading. By part (1), $R$ is left noetherian. Since $A$
is isomorphic to $R/(r-1)R$, $A$ is left noetherian. Noetherian
connected graded algebras are locally finite and affine, and hence the second
assertion follows.

(3) Suppose that $B$ is as stated. As in the proofs of parts (1) and (2),
$B\cong R/rR$ and $A\cong R/(r-1)R$, where $r$
is the central regular element of degree 1 in $R$ introduced in (1). Thus $R$ is
Auslander Gorenstein and GK-Cohen-Macaulay, by
\cite[Theorem 5.10]{Le}. By the Rees Lemma \cite[Proposition 3.4(b)
and Remarks 3.4(3)]{Le}, $A$ is Auslander Gorenstein and
GK-Cohen-Macaulay.

(4) Suppose that $\mathrm{gldim}B < \infty$, with $B$ noetherian, 
Auslander-Gorenstein and GK-Cohen-Macaulay. By parts (2) and (3),
$A$ is noetherian,
Auslander Gorenstein, and GK-Cohen-Macaulay. Moreover $B$ is a
domain by \cite[Theorem 4.8]{Le}, so $A$ is a domain by
Lemma \ref{yylem1.3}(1). It remains to prove (\ref{E1.4.1}).
Since $B=R/rR$ with $r$ a central non-zero-divisor of degree 1,
\cite[Lemma 7.6]{LP} implies that $R$ has global
dimension equal to $\gldim B+1$. Let $C$ be the localization $R[r^{-1}]$.
Then
\begin{equation}
\label{E1.4.2}\tag{E1.4.2}\gldim C\leq \gldim R=\gldim B+1.
\end{equation}
Note that $C=A[r^{\pm 1}]$, so that $\gldim A + 1 = \gldim C$.
Therefore, by \eqref{E1.4.2}, $\gldim A \leq \gldim B$. Since
both $A$ and $B$ are
affine, one can use their Hilbert series to compute their
GK-dimensions, \cite[Lemma 6.1(b)]{KL}; hence
$\GKdim A\geq \GKdim B$. By the GK-Cohen-Macaulay condition for
connected graded algebras, $\GKdim A=\gldim A$ and $\GKdim B=\gldim B$.
Therefore (\ref{E1.4.1}) follows.
\end{proof}

\section{Hopf algebras whose algebra is connected graded}
\label{yysec2}

\subsection{Shifting the augmentation}
\label{yysec2.1}

\begin{lemma}
\label{yylem2.1} Let $H$ be a Hopf algebra.
\begin{enumerate}
\item[(1)]
Let $I$ be the kernel of a character $\chi: H \rightarrow k$. Then
there is an algebra isomorphism $\sigma\in {\rm{Aut}}(H)$ such
that $\sigma(I)=\ker \epsilon$.
\item[(2)]
If $H$ is connected graded as an algebra, then there is a
grading of $H$ such that $H=\bigoplus_{i=0}^\infty H(i)$ is
connected graded and $\ker \epsilon =\bigoplus_{i\geq 1}H(i)$.
\end{enumerate}
\end{lemma}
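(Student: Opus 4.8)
The plan is to prove (1) by the classical device of \emph{winding automorphisms} attached to a character, and then to deduce (2) from (1) by transporting the given grading along such an automorphism.

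For (1), I would use the winding automorphism attached to $\chi$. For a character $\psi\colon H\to k$ write $\tau_\psi:=(\psi\otimes\mathrm{id})\circ\Delta$, so that $\tau_\psi(h)=\sum\psi(h_1)h_2$ in Sweedler notation, and set $\sigma:=\tau_\chi$. Since $\chi$ is an algebra map and $\Delta\colon H\to H\otimes H$ is an algebra map, $\sigma$ is a composite of algebra maps $H\to H\otimes H\to k\otimes H=H$, hence an algebra endomorphism fixing $1$. Invertibility is the point at which the full Hopf structure enters: the convolution inverse of $\chi$ in the dual algebra is $\chi\circ S$, since $\sum\chi(h_1)\chi(S(h_2))=\chi(\sum h_1 S(h_2))=\epsilon(h)$, while a short coassociativity computation gives the composition law $\tau_\chi\circ\tau_{\chi'}=\tau_{\chi'\ast\chi}$, where $\ast$ denotes the convolution product. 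Taking $\chi'=\chi\circ S$ yields $\tau_\chi\circ\tau_{\chi\circ S}=\tau_\epsilon=\mathrm{id}_H$, so $\sigma\in\mathrm{Aut}(H)$. Finally the counit axiom gives $\epsilon(\sigma(h))=\sum\chi(h_1)\epsilon(h_2)=\chi(h)$, that is $\epsilon\circ\sigma=\chi$; hence $h\in I=\ker\chi$ if and only if $\sigma(h)\in\ker\epsilon$, which is precisely $\sigma(I)=\ker\epsilon$.

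For (2), suppose $H=k\oplus\bigoplus_{i\geq1}H(i)$ is connected graded as an algebra, and let $\mathfrak{m}=\bigoplus_{i\geq1}H(i)$ be its augmentation ideal. Since $\mathfrak{m}$ is a graded two-sided ideal with $H/\mathfrak{m}\cong k$, the projection $\pi\colon H\to k$ along $\mathfrak{m}$ is a character with $\ker\pi=\mathfrak{m}$. Applying (1) to $\chi=\pi$ produces $\sigma\in\mathrm{Aut}(H)$ with $\sigma(\mathfrak{m})=\ker\epsilon$. I would then define a new grading by $H'(i):=\sigma(H(i))$. Because $\sigma$ is a bijective algebra map fixing $1$, one checks immediately that $H=\bigoplus_i H'(i)$, that $H'(i)H'(j)\subseteq H'(i+j)$, and that $H'(0)=\sigma(k)=k$, so this is again a connected grading of $H$; moreover $\bigoplus_{i\geq1}H'(i)=\sigma(\mathfrak{m})=\ker\epsilon$, as required.

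The conceptual steps are short; the only genuine work lies in the bookkeeping for (1), namely verifying the composition law $\tau_\chi\circ\tau_{\chi'}=\tau_{\chi'\ast\chi}$ (a direct application of coassociativity) together with the convolution-inverse identity $\chi\ast(\chi\circ S)=\epsilon$ (where the antipode axiom is essential). I expect this invertibility verification to be the main, though routine, obstacle; part (2) is then purely formal, since transporting a grading along an algebra automorphism automatically preserves the grading axioms and connectedness, and an isomorphism preserves the dimensions of the graded pieces so that local finiteness, if present, is retained.
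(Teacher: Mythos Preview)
Your proof is correct and follows exactly the same approach as the paper: for (1) you use the left winding automorphism $h\mapsto\sum\chi(h_1)h_2$, and for (2) you transport the original grading along this automorphism. The paper's proof is much more terse (it simply names the winding automorphism and then defines $H(i)=\sigma(B(i))$ without further comment), whereas you supply the verification that $\sigma$ is invertible via the convolution inverse $\chi\circ S$ and that $\epsilon\circ\sigma=\chi$; but the underlying argument is identical.
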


\begin{proof}
(1) Take $\sigma$ to be the left winding automorphism $h\mapsto
\sum \chi(h_1) h_2$.

(2) Let $H=k\oplus \bigoplus_{i=1}^\infty B(i)$ be the given
grading of $H$ and let $I=\bigoplus_{i=1}^\infty B(i)$.
By part (1), there is an algebra automorphism $\sigma$ of $A$ such that
$\sigma(I)=\ker \epsilon$. Let $H(i)=\sigma(B(i))$. Then
$H=\bigoplus_{i=0}^\infty H(i)$ is a connected graded algebra
and $\ker \epsilon =\bigoplus_{i\geq 1}H(i)$.
\end{proof}

\subsection{Structure of connected graded Hopf algebras}
\label{yysec2.2}
We now prove Theorem \ref{yythm0.2}.

Let $A$ be an algebra and $M$ an $A$-bimodule. For any $i\geq 0$,
the $i^{th}$ Hochschild homology (respectively, Hochschild cohomology)
of $A$ with coefficients in $M$ is denoted by $H_i(A,M)$
(respectively, $H^i(A,M)$).  We say that \emph{Poincar{\'e}
duality} holds for Hochschild (co)homology over $A$ if there is a
non-negative integer $n$ such that
$$H^{n-i}(A,M)\cong H_{i}(A,M)$$
for all $i$, $0\leq i \leq n,$ and for all $A$-bimodules $M$.

\begin{theorem}
\label{yythm2.2}
Let $H$ be a Hopf $k$-algebra which is connected graded as an algebra.
In parts {\rm{(2,3,4,5)}}, we further assume that $\GKdim H =n <\infty$.
\begin{enumerate}
\item[(1)]
$H$ is a domain.
\item[(2)]
$H$ is affine as an algebra,
and is noetherian, Cohen-Macaulay, Auslander regular and Artin-Schelter
regular, with $\gldim H = n$.
\item[(3)]
$H$ is Calabi-Yau; that is, the Nakayama automorphism of $H$ is the identity.
\item[(4)]
$S^2=Id_H$.
\item[(5)]
Poincar{\'e} duality holds for Hochschild (co)homology over $H$.
\end{enumerate}
\end{theorem}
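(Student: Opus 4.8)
The plan is to reduce the statement to the case of a Hopf algebra that is connected as a \emph{coalgebra}, where the corresponding facts are already available from \cite[Theorem 6.9 and Corollary 6.10]{Zh}, and then to transport the conclusions back to $H$ through the machinery of \S\ref{yysec1.3}. First I would apply Lemma \ref{yylem2.1}(2) to re-grade $H$ so that $\ker\epsilon=\fm=\bigoplus_{i\geq 1}H(i)$. The augmentation ideal $\fm$ is then a graded Hopf ideal satisfying $\Delta(\fm)\subseteq \fm\otimes H+H\otimes \fm$, so the $\fm$-adic filtration --- that is, the standard graded filtration \eqref{E1.1.1} --- is a Hopf algebra filtration. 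Consequently $B:=\gr_{\mathcal F}H\cong\bigoplus_{i\geq 0}\fm^i/\fm^{i+1}$ is itself a connected graded Hopf algebra, generated in degree $1$ for the filtration grading; since its degree-one component is primitive and generates, $B$ is connected as a coalgebra. Because $\gr_{\mathcal F}H$ and $H$ share the same Adams Hilbert series, \cite[Lemma 6.1]{KL} gives $\GKdim B=\GKdim H=n$.

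With $B$ in hand I would invoke \cite[Theorem 6.9 and Corollary 6.10]{Zh}: the coalgebra-connected Hopf algebra $B$ of finite GK-dimension is a noetherian domain which is Cohen-Macaulay, Auslander regular and Artin-Schelter regular of global dimension $n$, has $S^2=\mathrm{Id}$, and is Calabi-Yau. The ring-theoretic and homological assertions (1) and (2) then follow by the filtered-to-graded transfer packaged in \S\ref{yysec1.3}. Indeed, $B$ is affine, so $\mathcal F$ is strict by Lemma \ref{yylem1.3}(2); since $B$ is noetherian, Auslander-Gorenstein, GK-Cohen-Macaulay and of finite global dimension, Proposition \ref{yypro1.4}(2)-(4) shows $H$ is affine, locally finite, noetherian, Auslander-Gorenstein, GK-Cohen-Macaulay and of finite global dimension with $\gldim H=\GKdim H=n$, while Lemma \ref{yylem1.3}(1) gives that $H$ is a domain. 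That $H$ inherits the Artin-Schelter (Gorenstein, hence regular) condition is the corresponding statement for the central extension $R=\mathrm{Rees}_{\mathcal F}H$, for which $R/rR\cong B$ and $R/(r-1)R\cong H$.

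The delicate points are the antipode and Nakayama statements (3)-(5), because Proposition \ref{yypro1.4} transports ring-theoretic data but not the behaviour of $S$. Here I would use the homological identity of \cite{BZ}, valid for any noetherian Artin-Schelter Gorenstein Hopf algebra: the Nakayama automorphism $\nu_H$ equals $S^{-2}$ composed with the left winding automorphism attached to the left homological integral character $\xi$ of $H$. For our connected graded $H$ the integral $\Ext^n_H(k,H)$ is a one-dimensional graded right $H$-module concentrated in a single Adams degree; as $\fm$ strictly raises degree it must annihilate this module, so $\xi=\epsilon$, the associated winding automorphism is the identity, and therefore $\nu_H=S^{-2}$. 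Thus (3) and (4) are equivalent, and it suffices to prove the Calabi-Yau statement $\nu_H=\mathrm{Id}_H$. I would deduce this by transferring $\nu_B=\mathrm{Id}_B$ along the central regular element $r$ of $R$: since $S$ preserves the $\fm$-adic filtration it induces $S_B$ on $B$, so $S_H^2$ acts as the identity on $\gr_{\mathcal F}H$, hence unipotently on each finite-dimensional graded piece $H(m)$, and matching this with the rigidity of Nakayama automorphisms under the extension $R/rR\cong B$ \cite{RRZ, LP} forces $\nu_H=\mathrm{Id}_H$, whence $S^2=\mathrm{Id}_H$. Finally (5) is formal: a noetherian Auslander-regular, Cohen-Macaulay algebra that is Calabi-Yau satisfies Van den Bergh duality with trivial Nakayama twist, which is exactly the Poincar\'e duality $H^{n-i}(H,M)\cong H_i(H,M)$ for all bimodules $M$.

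The step I expect to be the main obstacle is precisely this transfer of the antipode-level invariants from $B$ to $H$: establishing $\nu_H=\mathrm{Id}_H$ (equivalently $S^2=\mathrm{Id}_H$) cannot be read off from Proposition \ref{yypro1.4} and requires combining the homological identity $\nu_H=S^{-2}$ with the compatibility of Nakayama automorphisms across the central extension $R\to B$ and the graded unipotence of $S_H^2$. Everything else is either a direct appeal to \cite{Zh} for the coalgebra-connected model $B$ or the routine filtered-to-graded bookkeeping of \S\ref{yysec1.3}.
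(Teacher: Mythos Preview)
Your overall architecture is the paper's: pass to $B=\gr_{\mathcal F}H$ via the $\fm$-adic filtration, establish the needed properties for $B$, and lift them to $H$ through Proposition~\ref{yypro1.4} and the Rees ring. Two points deserve correction.

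First, a misattribution. You credit \cite[Theorem 6.9, Corollary 6.10]{Zh} with giving $S_B^2=\mathrm{Id}_B$ and the Calabi--Yau property for $B$; those results only yield that a coalgebra-connected Hopf algebra of finite GK-dimension is a noetherian Auslander regular Cohen--Macaulay domain with $\gr_c$ a polynomial ring --- nothing about $S^2$ or the Nakayama automorphism. The paper instead invokes \cite[Proposition~3.4(a)]{GZ}, which identifies $B$ directly with $U(\mathfrak g)$ for a positively graded (hence nilpotent) Lie algebra~$\mathfrak g$. This is precisely your observation that $B$ is generated by primitives, hence cocommutative; once $B\cong U(\mathfrak g)$, both $S_B^2=\mathrm{Id}_B$ and $\mu_B=\mathrm{Id}_B$ are classical facts about enveloping algebras of nilpotent Lie algebras. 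So the route through $U(\mathfrak g)$ is not just cleaner, it is what actually supplies the antipode-level information for $B$.

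Second, your transfer of $\mu_B=\mathrm{Id}$ to $\mu_H=\mathrm{Id}$ has a genuine gap. You correctly observe that $S^2$ preserves the $\fm$-adic filtration and induces the identity on $B$, so $S^2$ acts unipotently on each finite-dimensional Adams piece $H(m)$. But unipotence alone does not force $S^2=\mathrm{Id}$: a nontrivial unipotent operator is never the identity. The paper supplies the missing ingredient: since $H$ is a connected graded domain it has no nontrivial inner automorphisms, so \cite[Theorem~0.6]{BZ} gives $S^4=\mathrm{Id}_H$, i.e.\ $\mu_H=S^2$ is an involution. Now one can finish either by your unipotence remark (a unipotent involution in characteristic~$0$ is the identity) or, as the paper does, by an explicit contradiction: if $\mu_H\neq\mathrm{Id}$, choose a homogeneous $(-1)$-eigenvector $x$ and use \cite[Lemma~1.5]{RRZ} to compare $\mu_R$ with $\mu_{\gr H}=\mathrm{Id}$ and with $\mu_H$ through the Rees ring, forcing $x$ into ever higher powers of $\fm$. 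Your phrase ``rigidity of Nakayama automorphisms'' is pointing at \cite[Lemma~1.5]{RRZ}, but without the finite-order input $S^4=\mathrm{Id}$ from \cite{BZ} the argument does not close.
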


\begin{proof}
(1) By Lemma \ref{yylem2.1}(2), we may assume that $\fm :=
\bigoplus_{i \geq 1}H(i)
=\ker \epsilon$. Let $\mathcal{F}$ be the standard graded filtration as in
\eqref{E1.1.1}, and write $\gr_{\mathcal{F}} H$ as $\gr H$. Then
$\gr H\cong \bigoplus_{i=0}^{\infty} \fm^i/\fm^{i+1}$.
By \cite[Proposition 3.4(a)]{GZ}, as Hopf algebras,
\begin{equation}
\label{E2.2.1}\tag{E2.2.1}
\gr H \quad \cong \quad U({\mathfrak g})\end{equation}
for some positively graded Lie algebra $\mathfrak g$, where, in
\eqref{E2.2.1},
$U(\mathfrak{g})$ has its standard cocommutative coalgebra structure. Since
$U({\mathfrak g})$ is a domain, so is $H$ by Lemma \ref{yylem1.3}(1).

(2) By \cite[Proposition 3.4(b)]{GZ}, $\dim {\mathfrak g}\leq
\GKdim H<\infty$. So ${\mathfrak g}$ is finite dimensional.
Then $\gr H$ is affine.
Since ${\mathfrak g}$ is finite dimensional,
$\gr H$ is a noetherian Auslander regular and
Cohen-Macaulay domain. The same list of properties for $H$,
and with them the fact that $\gldim H = n$, follows from
Proposition \ref{yypro1.4}(3) and (4). Since $H$ is a noetherian
connected graded algebra, it is easily seen to be affine.
Finally, $H$ is Artin-Schelter regular by \cite[Lemma 6.1]{BZ}.

(3,4) By (2) and \cite[Corollary 0.4]{BZ}, $H$ is skew
Calabi-Yau, so it remains for (3) to show that the Nakayama
automorphism is trivial. Since the bimodule
$\int^{\ell}_H := \Ext^n_H(H/\fm, H)$ is graded and has
dimension one by Artin-Schelter regularity (2), it is the trivial module
(on both right and left). Hence, by \cite[Theorem 0.3]{BZ}, the
Nakayama automorphism of $H$ is $S^2$. Note that, since $H$ is
a connected graded domain, it has no non-trivial inner automorphisms.
Hence, by \cite[Theorem 0.6]{BZ}, $S^4=Id_H$. It remains to
show that $S^2 =Id_H$.

By \eqref{E2.2.1}, $S_{\gr H}^2=Id_{\gr H}$. Moreover, since
$\gr H$ is the enveloping algebra of a nilpotent Lie algebra, the
Nakayama automorphism $\mu_{\gr H}$ is the identity. Let
$\mu_R$ be the Nakayama automorphism of
$R:=\mathrm{Rees}_{\mathcal{F}} H$.  Keeping the notation
introduced for $R$ in $\S$\ref{yysec1.3}, since $r$ is central,
\begin{equation}
\label{E2.2.2}\tag{E2.2.2}
\mu_{\gr H}=\mu_R\otimes_R \gr H =\mu_R\otimes_R R/\langle r \rangle,
\end{equation}
by \cite[Lemma 1.5]{RRZ}. Similarly, since \cite[Lemma 1.5]{RRZ}
holds in an ungraded setting (with the same proof),
\begin{equation}
\label{E2.2.3}\tag{E2.2.3}
\mu_H=\mu_R\otimes_R H=\mu_R\otimes_R R/\langle r-1 \rangle.
\end{equation}
Since $H$, $\gr H$ and $R$ are all graded, their Nakayama automorphisms
preserve their gradings. Since $r$ is central in $R$, $\mu_R(r)=r$.
Suppose $\mu_H$ is not the identity. Nevertheless $\mu_H^2=Id_H$ by the
previous paragraph. Pick a nonzero homogeneous element $x \in H$ of smallest
degree such that $\mu_H (x)=-x$. Fix $i$ such that
$x\in \fm^i\setminus \fm^{i+1}$. Since $\mu_R$ preserves the grading, $y:=\mu_R(x)\in H$
has the same degree as $x$. So $\mu_R(xr^{-i})=yr^{-i}$
where $y\in \fm^i\setminus \fm^{i+1}$. Write $w'=\gr_{\mathcal{F}} w$ for every $w\in H$.
Then \eqref{E2.2.2} implies that
\begin{align*} x'=& \mu_{\gr H}(x') \\
=& \mu_R(xr^{-i}) \mod (r) \\
=& yr^{-i} \mod(r) \\
=& y'.
\end{align*}
Hence $x-y\in \fm^{i+1}$, so $y=x+z$ for some $z\in \fm^{i+1}$.
Similarly, \eqref{E2.2.3} implies that $y=-x$ as $\mu_H(x)=-y$.
Therefore, $-x=x+z$, so that $x=-\frac{1}{2} z
\in \fm^{i+1}$. This is a contradiction, and so $\mu_H=S^2 = Id_H$ as required.

(5) This follows from \cite[Corollary 0.4]{BZ} and the fact (3) that
the Nakayama automorphism $\mu_H$ is the identity.
\end{proof}

Theorem \ref{yythm2.2}(2) answers \cite[Question C]{Br1},
\cite[Question E]{Br2} and \cite[Question 3.5]{Go2} affirmatively when $H$
is connected graded of finite Gel'fand-Kirillov dimension as an algebra.
Theorem \ref{yythm0.2} follows from the above theorem.
The following are convenient reformulations of parts of Theorem \ref{yythm2.2}.

\begin{corollary}
\label{yycor2.3} Let $A$ be an algebra of finite GK-dimension that
is not Calabi-Yau.
\begin{enumerate}
\item[(1)]
If $A$ is connected graded, then $A$ does not possess a
Hopf algebra structure.
\item[(2)]
If $A$ has a Hopf algebra structure, then $A$ cannot be connected
graded as an algebra.
\end{enumerate}
\end{corollary}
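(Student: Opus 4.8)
The plan is to observe that both parts are simply the contrapositive of Theorem \ref{yythm2.2}(3), once one notes that both ``finite GK-dimension'' and ``Calabi-Yau'' (that is, triviality of the Nakayama automorphism) are invariants of the underlying associative algebra, defined without reference to any coalgebra or Hopf structure. Thus the phrase ``$A$ is not Calabi-Yau'' is unambiguous for a bare algebra $A$, and the finiteness and homological hypotheses placed on $A$ are precisely the conclusions that Theorem \ref{yythm2.2}(3) would deny.

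For part (1), I would argue by contradiction. Suppose the connected graded algebra $A$ of finite GK-dimension does carry some Hopf algebra structure $(\Delta, \epsilon, S)$. Then $A$, so equipped, is a Hopf $k$-algebra which is connected graded as an algebra and has $\GKdim A = n < \infty$. These are exactly the hypotheses of Theorem \ref{yythm2.2}, whose part (3) then forces $A$ to be Calabi-Yau. This contradicts the standing assumption, so no such Hopf structure can exist.

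For part (2) the argument is the mirror image: here $A$ is assumed to carry a Hopf algebra structure and to have finite GK-dimension but not to be Calabi-Yau, and I would suppose for contradiction that $A$ is in addition connected graded as an algebra. Once again Theorem \ref{yythm2.2}(3) applies to this Hopf algebra and yields that $A$ is Calabi-Yau, contradicting the hypothesis; hence $A$ cannot be connected graded.

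The only point requiring any care --- and it is not truly an obstacle --- is to confirm that Theorem \ref{yythm2.2} imposes no compatibility between the algebra grading and the coalgebra structure, asking merely that the algebra be connected graded and simultaneously admit a Hopf structure. In particular the counit need not be the graded augmentation \emph{a priori}, but Lemma \ref{yylem2.1}(2) shows that the grading may be adjusted by a winding automorphism so that $\ker \epsilon = \bigoplus_{i \geq 1} A(i)$, so this matching causes no difficulty. With that verified, both implications are immediate.
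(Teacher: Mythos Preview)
Your proposal is correct and matches the paper's treatment: the paper gives no separate proof of this corollary, merely remarking that it is a ``convenient reformulation'' of parts of Theorem \ref{yythm2.2}, which is exactly the contrapositive argument you spell out. Your added comment about Lemma \ref{yylem2.1}(2) is accurate but unnecessary here, since that adjustment is already absorbed into the proof of Theorem \ref{yythm2.2} itself; the hypotheses of that theorem demand only that $H$ be connected graded \emph{as an algebra}, with no compatibility assumed.
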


\subsection{Hilbert series}
\label{yysec2.3}
The following is Theorem \ref{yythm0.3}.

\begin{theorem}
\label{yythm2.4}
Let $H$ be a Hopf algebra that is connected graded and locally
finite as an algebra.
\begin{enumerate}
\item[(1)]
The Hilbert series of $H$ is
$$\frac{1}{\prod_{i=1}^\infty (1-t^i)^{n_i}}$$
for some non-negative integers $n_i$. Further, the sequence $\{n_i\}_{i\geq 1}$
is uniquely determined by the Hilbert series of $H$.
\item[(2)]
Suppose $H$ is generated in degree 1. Then $H$ is
isomorphic as an algebra to $U({\mathfrak g})$ for a graded Lie algebra
${\mathfrak g}$ generated in degree 1. Hence, for all $i \geq 1$,

$$\mathrm{dim}_k \mathfrak{g}_i \quad = \quad n_i,$$
where $\mathfrak{g}_i$ is the $i^{\mathit{th}}$ degree component
of ${\mathfrak g}$.
\item[(3)] Suppose again that $H$ is generated in degree 1, $H \neq k$. Then
$$ \{j: n_j \neq 0 \} \quad = \quad [1,\ell) \cap \mathbb{N},
\textit{ for some } \ell \in [2, \infty]. $$
\item[(4)]
If the Hilbert series of $H$ is $\frac{1}{(1-t)^n}$ for some $n \geq 1$,
then $H$ is commutative, and hence $H \cong k[x_1, \ldots , x_n].$
\item[(5)]
$\mathrm{GKdim}H < \infty$ if and only if $\sum_{i=1}^{\infty} n_i<\infty$.
In this case, $$\GKdim H=
\sum_{i=1}^{\infty} n_i.$$
\end{enumerate}
\end{theorem}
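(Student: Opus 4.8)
The plan is to reduce every part to the structural identity established in the proof of Theorem~\ref{yythm2.2}: the isomorphism \eqref{E2.2.1} of graded Hopf algebras $\gr H\cong U(\mathfrak g)$, where $\gr H=\bigoplus_{i\ge 0}\fm^i/\fm^{i+1}$ is the associated graded ring of $H$ for the standard (that is, $\fm$-adic) filtration and $\mathfrak g=\bigoplus_{i\ge 1}\mathfrak g_i$ is a positively Adams-graded Lie algebra. Since $\gr H$ and $H$ share a Hilbert series (as recorded in \S\ref{yysec1.3}), it suffices throughout to compute with $U(\mathfrak g)$.

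For part (1) I would invoke the Poincar{\'e}--Birkhoff--Witt theorem. Fixing a homogeneous $k$-basis of $\mathfrak g$, PBW writes $U(\mathfrak g)$ as ordered monomials in this basis, whence
$$H_H(t)=H_{\gr H}(t)=\prod_{i\ge 1}\frac{1}{(1-t^i)^{\dim_k\mathfrak g_i}}.$$
This is the asserted form. Uniqueness of the exponents is an elementary induction---comparing two such product expansions degree by degree, equivalently reading off the coefficients of $-\log H_H(t)$, recovers each $n_m$ from the Hilbert series together with $n_1,\dots,n_{m-1}$---and it simultaneously identifies $n_i=\dim_k\mathfrak g_i$, the bookkeeping that feeds all the remaining parts.

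For parts (2) and (3) I assume $H$ is generated in degree $1$. Then Lemma~\ref{yylem1.2} provides a graded algebra isomorphism $\gr H\cong H$, which together with \eqref{E2.2.1} gives $H\cong U(\mathfrak g)$ as graded algebras. As $U(\mathfrak g)_1=\mathfrak g_1$, generation of $H$ in degree $1$ forces $\mathfrak g$ to be generated as a Lie algebra by $\mathfrak g_1$; this is (2), with $\dim_k\mathfrak g_i=n_i$ supplied by (1). For (3) I would use the standard fact that a graded Lie algebra generated in degree $1$ satisfies $\mathfrak g_j=[\mathfrak g_1,\mathfrak g_{j-1}]$ for every $j\ge 2$ (the left-normed brackets span, by the Jacobi identity), so that $\mathfrak g_{j-1}=0$ implies $\mathfrak g_j=0$. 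Hence $\{j:n_j\ne 0\}$ is downward closed in $\mathbb N_{\ge 1}$, that is, it equals $[1,\ell)\cap\mathbb N$ for some $\ell$; and $H\ne k$ forces $\mathfrak g_1\ne 0$, so $\ell\ge 2$.

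Part (4) is the step I expect to demand the most care, since ``$\gr H$ is commutative'' does not imply ``$H$ is commutative'' in general; indeed a noncommutative quantum polynomial ring is Artin-Schelter regular with Hilbert series $(1-t)^{-n}$. Here uniqueness in (1) forces $n_1=n$ and $n_i=0$ for $i\ge 2$, so $\mathfrak g=\mathfrak g_1$ is abelian and $\gr H\cong k[x_1,\dots,x_n]$ with the $x_i$ in Adams degree $1$. The crux is to promote this to generation of $H$ itself in degree $1$: the ring $\gr H$ is generated in filtration degree $1$ by $\fm/\fm^2$, and the polynomial identification shows $\fm/\fm^2$ is $n$-dimensional and lies entirely in Adams degree $1$, so Lemma~\ref{yylem1.1} gives $d(H)=n$ with all minimal generators in degree $1$. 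Thus $H$ is generated in degree $1$, part (2) applies, and $H\cong U(\mathfrak g')$ with $\mathfrak g'=\mathfrak g'_1$ abelian, i.e.\ $H\cong k[x_1,\dots,x_n]$. Finally, for (5) I would split on the finiteness of $\sum_i n_i=\dim_k\mathfrak g$: if it is finite then $\gr H=U(\mathfrak g)$ is noetherian, Auslander regular and GK-Cohen-Macaulay of finite global dimension, so Proposition~\ref{yypro1.4}(4) gives $\GKdim H=\GKdim\gr H=\dim_k\mathfrak g=\sum_i n_i$; if it is infinite, the product formula of (1) makes the Hilbert function of $H$ grow faster than any polynomial, forcing $\GKdim H=\infty$.
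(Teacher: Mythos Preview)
Your approach is essentially that of the paper: both proofs rest on the identification $\gr_{\mathcal F}H\cong U(\mathfrak g)$ from \cite[Proposition 3.4(a)]{GZ}, use PBW for the Hilbert series in (1), invoke Lemma~\ref{yylem1.2} for (2), the bracket identity $\mathfrak g_j=[\mathfrak g_1,\mathfrak g_{j-1}]$ for (3), and the observation that $\mathfrak g=\mathfrak g_1$ forces degree-$1$ generation of $H$ for (4). For the forward direction of (5) you route through Proposition~\ref{yypro1.4}(4) while the paper quotes Theorem~\ref{yythm2.2} and \cite{KL}; these are equivalent.

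There is, however, a genuine gap in your argument for the converse of (5). The step ``the Hilbert function of $H$ grows faster than any polynomial, forcing $\GKdim H=\infty$'' is not valid without further input: for a connected graded locally finite algebra, rapid growth of the Hilbert function does \emph{not} by itself imply large Gel'fand--Kirillov dimension. (For a crude illustration, the trivial extension $A=k\oplus V$ with $V$ an infinite-dimensional positively graded space and $V\cdot V=0$ can have exponential Hilbert function yet $\GKdim A=0$.) The Hilbert series only bounds $\dim V^n$ from \emph{above} via $V^n\subseteq A_{\le nd}$; it gives no lower bound unless $A$ is finitely generated, which is exactly what fails when $\sum n_i=\infty$. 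The paper closes this gap by passing through the associated graded: since $\GKdim \gr_{\mathcal F}H\le \GKdim H$ \cite[Lemma 6.5]{KL} and $\gr_{\mathcal F}H\cong U(\mathfrak g)$ has $\GKdim U(\mathfrak g)=\dim_k\mathfrak g$ \cite[Example 6.9]{KL}, finiteness of $\GKdim H$ forces $\sum_i n_i=\dim_k\mathfrak g<\infty$. Inserting this one line repairs your argument completely.
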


\begin{proof} (1),(2): Let $\mathcal{F}$ be the standard graded filtration.
Since $H$ is locally finite, $\gr_{\mathcal{F}} H$ and $H$
have the same Hilbert series with respect to their Adams grading.
Thus, in proving (1), we may assume that $H=\gr_{\mathcal{F}} H$. By
\cite[Proposition 3.4(a)]{GZ}, $\gr_{\mathcal{F}} H$ is isomorphic to
$U({\mathfrak g})$ for a graded Lie algebra ${\mathfrak g}$,
with ${\mathfrak g}$ generated by ${\mathfrak g}_1:=\fm/\fm^2$.
Since both $\fm$ and $\fm^2$ are Adams graded, so is
${\mathfrak g}_1$. Therefore ${\mathfrak g}$ is Adams graded.
Since $H$ is locally finite, so is ${\mathfrak g}$. Now, by the PBW theorem,
$$H_{H}(t)=H_{U({\mathfrak g})}(t)=
\frac{1}{\prod_{i=1}^\infty (1-t^i)^{n_i}}$$
where $n_i$ is the dimension of the degree $i$ component $\mathfrak{g}_i$
of ${\mathfrak g}$. It is clear that the sequence $\{n_i\}_{i \geq 1}$ is
uniquely determined by
the Hilbert series of $H$.

Now (2) follows from the above together with Lemma \ref{yylem1.2}.

(3) Suppose that $H$ is generated in degree 1. By (2), so is $\mathfrak{g}$.
Now (3) is a consequence of (2) and the fact that, for all $i > 1$,
$\mathfrak{g}_i = [\mathfrak{g}_1,\mathfrak{g}_{i-1}]$.

(4) Suppose that the Hilbert series of $H$ is $(1-t)^n$. Then (1)
implies that $n=n_1$ and $n_i=0$ for all $i>1$. Hence, by the
proof of (1), ${\mathfrak g}$ is concentrated in degree 1. Thus ${\mathfrak g}$
is abelian, and $\gr_{\mathcal{F}} H$ is generated in degree 1. As a consequence,
$H$ also is generated in degree 1. By Lemma \ref{yylem1.2},
$H\cong \gr_{\mathcal{F}} H=U({\mathfrak g})$, which is commutative.

(5) Suppose that $\sum_{i=1}^{\infty} n_i<\infty$. By (1), $\GKdim H<\infty$.
By Theorem \ref{yythm2.2}, $H$ is an affine domain, so that $\GKdim H$
can be computed by its Hilbert series, by \cite[Lemma 6.1(b)]{KL}. Hence,
$\GKdim H=\sum_{i=1}^{\infty} n_i$ by \cite[Theorem 12.6.2]{KL}.

Suppose conversely that $\mathrm{GKdim}H < \infty$. Then
$\mathrm{GKdim}\gr_{\mathcal{F}} H < \infty$ by
\cite[Lemma 6.5]{KL}. Since $\gr_{\mathcal{F}} H
\cong U(\mathfrak{g})$ as noted in the proof of (1),
$\mathrm{dim}_k \mathfrak{g} = \sum_i n_i < \infty$ by \cite[Example 6.9]{KL}.
\end{proof}

\begin{remarks}
\label{yyrem2.5}
(1) It is trivial but nevertheless perhaps relevant to observe
that the hypothesis of degree 1 generation in parts (2) and (3)
of the theorem is not always valid: for example, the coordinate
ring of the 3-dimensional Heisenberg group $U$ is a graded Hopf
algebra $\mathcal{O}(U) \cong k[X,Y,Z]$, where the generators
have degrees 1,1 and 2.

(2) By part (2) of the above theorem, when $H$ is generated in
degree 1, then $H$ is isomorphic as an algebra to the enveloping
algebra of a nilpotent Lie algebra $\mathfrak{g}$.
This is not in general true, however, when the
hypothesis of degree 1 generation is
dropped - see Theorem \ref{yythm0.5}(2) for an example.
\end{remarks}

\subsection{Another proof of Theorem \ref{yythm0.3}(4)}
\label{yysec2.4}
We offer in this subsection an alternative proof of Theorem
\ref{yythm0.3}(4), which will motivate the generalisation 
in $\S$3. We use here the notation introduced in
$\S$\ref{yysec1.3}.

\begin{lemma}
\label{yylem2.6} Let $H$ be an affine Hopf algebra.
Suppose $H$ is connected graded as an algebra.
\begin{enumerate}
\item[(1)]
The abelianization $H_{ab}$ is
isomorphic to a commutative polynomial ring.
\item[(2)]
$ d(H) \leq \mathrm{GKdim}H$ and $d(H) \leq \mathrm{Kdim}H.$
\item[(3)]
Suppose that $ d(H) =\mathrm{GKdim}H$ or that $d(H) =
\mathrm{Kdim}H.$ Then $H=H_{ab}$.
\end{enumerate}
\end{lemma}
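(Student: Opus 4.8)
The plan is to deduce all three parts from the abelianization $H_{ab}=H/\langle[H,H]\rangle$, using the commutative classification in Theorem \ref{yythm0.1} and the ring-theoretic properties established in Theorem \ref{yythm2.2}.

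For part (1), I would first observe that $\langle[H,H]\rangle$ is a graded Hopf ideal: it is graded because $H$ is graded and the commutator of two homogeneous elements is homogeneous, and it is a Hopf ideal because the largest commutative quotient of a Hopf algebra is again a Hopf algebra (the counit annihilates commutators, and the coproduct and antipode descend since $H_{ab}\otimes H_{ab}$ is commutative). Hence $H_{ab}$ is an affine commutative Hopf algebra, being a quotient of the affine algebra $H$, and it inherits a connected grading from $H$. The implication $(2)\Rightarrow(3)$ of Theorem \ref{yythm0.1} then yields $H_{ab}\cong k[x_1,\dots,x_n]$ for some $n$.

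For part (2), I would combine $d(H)=d(H_{ab})$ (Lemma \ref{yylem1.1}(2)) with part (1): since $H_{ab}\cong k[x_1,\dots,x_{d(H)}]$ we have $n=d(H)$ and $\GKdim H_{ab}=\Kdim H_{ab}=d(H)$. As $H_{ab}$ is a quotient of $H$, neither dimension can increase, giving $d(H)\le\GKdim H$ and $d(H)\le\Kdim H$. For part (3), the idea is that equality of dimensions forces $K:=\langle[H,H]\rangle$ to vanish. By Theorem \ref{yythm2.2}, $H$ is a connected graded noetherian domain of finite GK-dimension. Suppose $K\neq 0$; then $K$ contains a nonzero homogeneous element $f$ of positive degree $p$, which is regular since $H$ is a domain. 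The graded exact sequence $0\to H(-p)\xrightarrow{\,f\,}H\to H/fH\to 0$ gives $H_{H/fH}(t)=(1-t^p)H_H(t)$, so the order of the pole of the Hilbert series at $t=1$ — namely the GK-dimension — drops by exactly one. Since $fH\subseteq K$, the algebra $H_{ab}=H/K$ is a quotient of $H/fH$ as right $H$-modules, whence $\GKdim H_{ab}\le\GKdim H-1<\GKdim H$. This contradicts $d(H)=\GKdim H$, forcing $K=0$, i.e. $H=H_{ab}$.

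The main obstacle is precisely this dimension-drop step, and in particular treating the Krull-dimension hypothesis uniformly with the GK one. The GK case is clean via Hilbert series, but the Krull case requires either the general fact that factoring a connected graded noetherian domain by a nonzero graded ideal strictly lowers the Krull dimension, or the identification $\Kdim H=\GKdim H$, which follows from the GK-Cohen--Macaulay property recorded in Theorem \ref{yythm2.2}. I expect the cleanest route is to establish $\Kdim H=\GKdim H$ first, so that the two hypotheses of part (3) coincide and a single Hilbert-series argument settles both.
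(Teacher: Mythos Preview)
Your proposal is correct and follows essentially the same route as the paper. Parts (1) and (2) are identical to the paper's argument: $H_{ab}$ is a connected graded commutative affine Hopf algebra, hence a polynomial ring by Theorem~\ref{yythm0.1}, and then $d(H)=d(H_{ab})=\GKdim H_{ab}=\Kdim H_{ab}$ gives the inequalities. For part (3) in the GK case, the paper simply observes that $\GKdim H_{ab}=\GKdim H$ and invokes \cite[Proposition~3.15]{KL} (a proper factor of a domain of finite GK-dimension has strictly smaller GK-dimension) to conclude $H=H_{ab}$; your Hilbert-series computation with a homogeneous regular $f$ is exactly a graded instantiation of that same principle, so the two arguments coincide in substance. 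Your identification of the Krull case as the delicate point is accurate: the paper merely says ``the argument for Krull dimension is similar,'' which amounts to the analogous drop $\Kdim H/I<\Kdim H$ for nonzero ideals in a (noetherian) domain, while your proposed reduction via $\Kdim H=\GKdim H$ would need $\GKdim H<\infty$ as input before Theorem~\ref{yythm2.2} can be invoked---so neither treatment makes this step fully explicit, and your caution there is well placed.
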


\begin{proof} (1) This follows from Theorem \ref{yythm0.1}(3) since
$H_{ab}$ is commutative and connected graded.

(2) As noted in Lemma \ref{yylem1.1}, since $H$ is connected graded,
\begin{equation}
\label{E2.6.1}\tag{E2.6.1}
d(H_{ab}) \quad = \quad d(H).
\end{equation}
Since $H_{ab}$ is a  polynomial ring by part (1),
\begin{equation}
\label{E2.6.2}\tag{E2.6.2}
d(H_{ab}) = \mathrm{GKdim}H_{ab} = \mathrm{Kdim}H_{ab}.
\end{equation}
The assertion follows from \eqref{E2.6.1} and \eqref{E2.6.2}.

(3) Suppose that $d(H) = \GKdim H$. Then $\GKdim H_{ab} = \GKdim H$
by \eqref{E2.6.1} and \eqref{E2.6.2}. Since $H$ is a domain by
Theorem \ref{yythm2.2}(1), $H = H_{ab}$ by \cite[Proposition 3.15]{KL}.
The argument for Krull dimension is similar.
\end{proof}

\begin{proof}[Proof of Theorem {\rm{\ref{yythm0.3}(4)}}]
Replace $H$ by $A$.
Suppose that $H_A(t)=\frac{1}{(1-t)^{n}}$. Then
\begin{equation}
\label{E2.6.4}\tag{E2.6.4}
 d(A) \geq \mathrm{dim}_k A_1 = n = \mathrm{GKdim}A.
\end{equation}
Thus $A = A_{ab}$ by Lemmas \ref{yylem2.6}(2),(3), and so $A$
is a polynomial algebra by Lemma \ref{yylem2.6}(1).
\end{proof}

\subsection{Consequences}
\label{yysec2.5}
We give two ``no Hopf structure" results. The first is a
straightforward consequence of \cite[Proposition 3.4(a)]{GZ},
which we record here since it fits the present context.
The second assembles some immediate consequences of
Theorem \ref{yythm0.3}.

\begin{proposition}
\label{xxpro2.7}
Let $A$ be a connected graded algebra and let $x,y$ be
nonzero elements in $A$ such that $xy=q yx$ for some
$q \in k\setminus \{1\}$. Then there is no Hopf
algebra structure on $A$.
\end{proposition}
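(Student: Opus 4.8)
The plan is to argue by contradiction, exploiting the fact that a connected graded Hopf algebra becomes commutative after passing to associated graded rings, so that it cannot accommodate a genuine $q$-commutation relation when $q \neq 1$. First I would suppose that $A$ carries a Hopf algebra structure. Since $A$ is connected graded as an algebra, \eqref{E2.2.1} (that is, \cite[Proposition 3.4(a)]{GZ}) applies and gives an isomorphism of Hopf algebras $\gr A \cong U(\mathfrak{g})$ for a positively graded Lie algebra $\mathfrak{g}$, where $\gr A = \bigoplus_i \fm^i/\fm^{i+1}$ is formed with respect to the standard graded (that is, $\fm$-adic) filtration of \eqref{E1.1.1}. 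In particular $\gr A$ is a domain. Note that no hypothesis on $\GKdim A$ is needed here, exactly as in the proof of Theorem \ref{yythm2.2}(1).

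Next I would set up the leading-symbol calculus. Because $A$ is connected graded, $\bigcap_i \fm^i = 0$, so every nonzero $w \in A$ has a well-defined nonzero leading symbol $\sigma(w) \in \gr A$. Since $\gr A$ is a domain, the symbol map is multiplicative on nonzero elements, $\sigma(ab) = \sigma(a)\sigma(b)$: the product $\sigma(a)\sigma(b)$ is nonzero, which forces $ab$ to have the expected leading degree. This is precisely the mechanism underlying Lemma \ref{yylem1.3}(1). Applying it to the hypothesis $xy = qyx$, and using that $q$ is a nonzero scalar, I obtain in $U(\mathfrak{g})$ the relation
$$\sigma(x)\,\sigma(y) = q\,\sigma(y)\,\sigma(x),$$
with $\sigma(x),\sigma(y)$ nonzero and $q \neq 1$.

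It then remains to rule this out inside $U(\mathfrak{g})$, which is the crux of the argument. Here I would invoke the Poincar\'e--Birkhoff--Witt theorem: the order filtration on $U(\mathfrak{g})$ has associated graded ring the symmetric algebra $S(\mathfrak{g})$, a commutative domain. Repeating the leading-symbol argument for this second filtration, the images $\overline{\sigma(x)}, \overline{\sigma(y)} \in S(\mathfrak{g})$ are nonzero and satisfy $\overline{\sigma(x)}\,\overline{\sigma(y)} = q\,\overline{\sigma(y)}\,\overline{\sigma(x)} = q\,\overline{\sigma(x)}\,\overline{\sigma(y)}$, the last equality by commutativity of $S(\mathfrak{g})$. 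Since $S(\mathfrak{g})$ is a domain and this product is nonzero, I conclude $q = 1$, contradicting the hypothesis.

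The only points that require care are the multiplicativity of the symbol map (handled uniformly by the domain property of the relevant associated graded) and the availability of \cite[Proposition 3.4(a)]{GZ} with no finiteness assumption, so that $\mathfrak{g}$ may be infinite dimensional; this is harmless, since $S(\mathfrak{g})$ is then a polynomial ring in possibly infinitely many variables and remains a domain. Neither presents a real obstacle. The essential idea is simply that the iterated passage to associated graded rings renders $A$ commutative at the level of leading symbols, leaving no room for a $q$-skew-commuting pair with $q \neq 1$.
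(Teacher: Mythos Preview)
Your argument is correct and follows the same two-step descent as the paper: first pass from $A$ to $\gr A \cong U(\mathfrak{g})$ via the $\fm$-adic filtration, then from $U(\mathfrak{g})$ to $S(\mathfrak{g})$ via the PBW filtration, and observe that a nontrivial $q$-commutation relation cannot survive in a commutative domain. The paper's proof inserts two additional reductions---replacing $\mathfrak{g}$ first by a locally finite Lie subalgebra containing $\sigma(x),\sigma(y)$, then by a finite-dimensional quotient---before invoking the PBW filtration; your version correctly recognises these as unnecessary, since PBW and the domain property of $S(\mathfrak{g})$ hold for arbitrary $\mathfrak{g}$, and your explicit tracking of leading symbols makes the transport of the relation $xy=qyx$ through both filtrations more transparent than the paper's ``we may assume $A=U(\mathfrak{g})$''.
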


\begin{proof} Suppose $A$ is a Hopf algebra. Passing to
the associated graded ring associated to the standard
filtration, we may assume that $A=U({\mathfrak g})$ for
a graded Lie algebra $\mathfrak{g}$, by \cite[Proposition 3.4(a)]{GZ}.
Taking a Hopf subalgebra, we might assume that
${\mathfrak g}$ is locally finite, and then, factoring by a
suitable Lie ideal, we may assume that ${\mathfrak g}$ is
finite dimensional. Now there is a
connected ${\mathbb N}$-filtration ${\mathcal F}$
associated to the Lie algebra such that $\gr_{\mathcal F} A$
is isomorphic to $S(\mathfrak{g})$, a commutative
polynomial ring. But this is impossible if there are
nonzero elements $x,y \in A$ and $q\neq 1$, such that $xy=qyx$.
\end{proof}

\begin{corollary}
\label{yycor2.8}
There is no Hopf algebra structure on
the following Koszul Artin-Schelter regular algebras:
\begin{enumerate}
\item[(1)]
Sklyanin algebras of any dimension;
\item[(2)]
skew polynomial rings $k_{p_{ij}}[x_1,\cdots,x_n]$ {\rm{(}}except for the
case $p_{ij}=1$ for all $i,j${\rm{)}};
\item[(3)]
quantum matrix algebras
${\mathcal O}_{q}(M_{n\times n})$ {\rm{(}}except for $q=1${\rm{)}}.
\item[(4)]
noncommutative Koszul Artin-Schelter regular algebras of
dimension $\leq 4$.
\end{enumerate}
\end{corollary}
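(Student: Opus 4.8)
The plan is to dispatch the four families using the two ``no Hopf structure'' mechanisms already assembled in $\S$\ref{yysec2}. For families (2) and (3) I would exhibit an explicit $q$-commuting pair of nonzero generators and invoke Proposition \ref{xxpro2.7}; for families (1) and (4) I would pin down the Hilbert series, observe that it equals $1/(1-t)^d$, and derive a contradiction from Theorem \ref{yythm2.4}(4). In every case the ambient algebra is connected graded, so all the tools of $\S$\ref{yysec2} apply directly.

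For the skew polynomial ring $A = k_{p_{ij}}[x_1,\dots,x_n]$ with some $p_{ij} \neq 1$, I would simply take the two generators $x_i, x_j$ realising this relation: they are nonzero and satisfy $x_i x_j = p_{ij} x_j x_i$ with $p_{ij} \in k \setminus \{1\}$, so Proposition \ref{xxpro2.7} immediately forbids a Hopf structure. The quantum matrix algebra $\mathcal{O}_q(M_{n\times n})$ with $q \neq 1$ is handled identically: among its defining relations are the ``row'' relations of the shape $X_{il}X_{ij} = q\, X_{ij}X_{il}$ for $j<l$, which furnish a nonzero $q$-commuting pair with $q \neq 1$, so Proposition \ref{xxpro2.7} applies again. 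The only point needing a word of justification is that the chosen generators are genuinely nonzero in $A$, which follows from the standard PBW/monomial basis of these algebras.

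For the Sklyanin algebras (of any global dimension $n$) and for the noncommutative Koszul Artin-Schelter regular algebras of global dimension $d \leq 4$, I would argue via the Hilbert series. Each such algebra $A$ is connected graded and, being Koszul, is generated in degree $1$. The key input is that its Hilbert series is $1/(1-t)^n$ (respectively $1/(1-t)^d$): for Sklyanin algebras this is a standard computation valid in every dimension, while in the general Koszul AS-regular case in dimension $\leq 4$ it follows from Koszulity together with the fact that the Koszul dual $A^!$ is Frobenius with Hilbert series $(1+t)^d$, so that the Koszul identity $H_A(t)\, H_{A^!}(-t) = 1$ yields $H_A(t) = 1/(1-t)^d$. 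Now suppose, for a contradiction, that $A$ carried a Hopf algebra structure. Then $A$ satisfies the hypotheses of Theorem \ref{yythm2.4}, and, its Hilbert series being $1/(1-t)^d$, part (4) of that theorem forces $A \cong k[x_1,\dots,x_d]$, which is commutative. This contradicts the standing assumption that $A$ is noncommutative, and so no Hopf structure can exist.

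The main obstacle is precisely the Hilbert-series step for family (4): one must know that every noncommutative Koszul AS-regular algebra of global dimension at most $4$ has Hilbert series exactly $1/(1-t)^d$. For $d \leq 3$ this is classical and uncontroversial; in dimension $4$ it rests on the structural results for Koszul AS-regular algebras, namely that the Koszul dual is Frobenius with the expected Hilbert polynomial, and it is here that the classification literature must be invoked. Once the Hilbert series is secured, the rest of the reasoning is purely formal, driven entirely by Theorem \ref{yythm2.4}(4) and Proposition \ref{xxpro2.7}. (One could instead route families (1) and (4) through Corollary \ref{yycor2.3} by checking failure of the Calabi-Yau property, but the Hilbert-series argument via Theorem \ref{yythm2.4}(4) is cleaner and applies uniformly.)
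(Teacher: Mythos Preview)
Your argument is correct, but the paper proceeds more uniformly: it simply observes that \emph{all four} families have Hilbert series $1/(1-t)^d$ (clear for (1)--(3), and by \cite[Proposition 1.4]{LP} for (4)), and then applies Theorem~\ref{yythm2.4}(4) once to get commutativity, a contradiction in every noncommutative case. In particular, the paper does not split off (2) and (3) via Proposition~\ref{xxpro2.7}.

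Your route for (2) and (3) via an explicit $q$-commuting pair is perfectly valid and arguably more hands-on, since it avoids even thinking about Hilbert series for those two families; on the other hand, the paper's single-mechanism proof is shorter and shows that the full strength of Proposition~\ref{xxpro2.7} is not actually needed here. One small caution on your treatment of (4): the sentence ``the Koszul dual $A^!$ is Frobenius with Hilbert series $(1+t)^d$'' is close to circular as stated, since $H_{A^!}(t)=(1+t)^d$ is equivalent, via the Koszul identity, to the very conclusion $H_A(t)=1/(1-t)^d$ you want. You correctly flag this as the genuine obstacle and defer to the classification literature; the paper resolves it by a direct citation of \cite[Proposition 1.4]{LP}, which you could simply invoke as well.
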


\begin{proof} We claim that the algebras listed
above have Hilbert series $(1-t)^{-d}$ for some $d$.
For parts (1,2,3), this is clear. For part (4),
this follows from \cite[Proposition 1.4]{LP}.
Now the assertion follows from Theorem
\ref{yythm0.3}(4).
\end{proof}

\subsection{The theorem of Latysev-Passman for connected
graded algebras}
\label{yysec2.6}
Latysev \cite{Lat} proved that the enveloping algebra
$U(\mathfrak{g})$ of a finite dimensional Lie algebra
$\mathfrak{g}$  over a field of characteristic 0 satisfies
a polynomial identity (or we say $U(\mathfrak{g})$ is PI)
if and only if $\mathfrak{g}$ is abelian. The hypothesis
of finite dimensionality was removed by Passman \cite{Pas}.
The aim of this subsection is to show that the same result
applies to Hopf algebras which are connected graded as algebras.

\begin{theorem}
\label{yythm2.9} Let $H$ be a Hopf algebra. Suppose that $H$ is
connected graded or that $\bigcap_{i=1}^{\infty} \fm^i=\{0\}$
where $\fm=\ker \epsilon$. Then $H$ is PI if and only if $H$
is commutative.
\end{theorem}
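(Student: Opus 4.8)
The plan is to prove the two directions separately, with the nontrivial content lying entirely in the forward implication (PI $\Rightarrow$ commutative); the reverse is immediate. The key technical reduction is to pass from $H$ to its associated graded algebra with respect to the standard filtration $\mathcal{F}$ coming from the powers of $\fm = \ker\epsilon$, and to exploit the structural result \cite[Proposition 3.4(a)]{GZ} that identifies $\gr_{\mathcal{F}} H$ with an enveloping algebra $U(\mathfrak{g})$ of a graded Lie algebra $\mathfrak{g}$. The strategy rests on the fact that the PI property is inherited by associated graded algebras: if $H$ satisfies a polynomial identity of degree $d$, then so does $\gr_{\mathcal{F}} H$, since a multilinear identity lifts to the associated graded by a standard leading-term argument.

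First I would handle the hypotheses uniformly. In the connected graded case, Lemma \ref{yylem2.1}(2) lets us assume $\fm = \bigoplus_{i \geq 1} H(i)$, and then the standard graded filtration \eqref{E1.1.1} is available with $\gr_{\mathcal{F}} H \cong \bigoplus_i \fm^i/\fm^{i+1}$; crucially, in this case $\bigcap_i \fm^i = \{0\}$ holds automatically because any nonzero homogeneous element lives in only finitely many powers of $\fm$. Thus both hypotheses collapse to the single condition $\bigcap_{i=1}^{\infty} \fm^i = \{0\}$, which is exactly what makes the passage to $\gr_{\mathcal{F}} H$ faithful (separating): the symbol map $H \setminus \{0\} \to \gr_{\mathcal{F}} H$ is well-defined and injective on the level needed to lift identities. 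This lets me treat both cases with one argument built around $\gr_{\mathcal{F}} H \cong U(\mathfrak{g})$.

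Next I would assume $H$ is PI and deduce that $\gr_{\mathcal{F}} H \cong U(\mathfrak{g})$ is PI. By the Latysev--Passman theorem \cite{Lat, Pas}, an enveloping algebra $U(\mathfrak{g})$ is PI if and only if $\mathfrak{g}$ is abelian; hence $\mathfrak{g}$ is abelian, so $U(\mathfrak{g}) = S(\mathfrak{g})$ is commutative, i.e. $\gr_{\mathcal{F}} H$ is commutative. The final step is to descend commutativity from $\gr_{\mathcal{F}} H$ back to $H$. For any $a, b \in H$ the commutator $[a,b]$ has trivial image in $\gr_{\mathcal{F}} H$ (as that ring is commutative, the leading symbol of $[a,b]$ vanishes), which forces $[a,b]$ into $\fm^j$ for every $j$; since $\bigcap_j \fm^j = \{0\}$ we conclude $[a,b] = 0$, so $H$ is commutative.

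The main obstacle I anticipate is making the ``PI descends to $\gr_{\mathcal{F}} H$'' and ``commutativity lifts back'' steps rigorous in the filtered (rather than purely graded) setting, where one must argue with leading symbols and confirm that the separation condition $\bigcap_i \fm^i = \{0\}$ genuinely suffices. One must also verify that in the connected graded case the two stated hypotheses really do reduce to this intersection condition, and that the lift of a multilinear identity to the associated graded algebra behaves correctly with respect to the grading on $\gr_{\mathcal{F}} H$. These are the delicate points; once they are settled, the invocation of \cite[Proposition 3.4(a)]{GZ} together with Latysev--Passman closes the argument.
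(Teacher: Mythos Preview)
Your overall strategy matches the paper's exactly: reduce both hypotheses to the separation condition $\bigcap_i \fm^i=\{0\}$ via Lemma~\ref{yylem2.1}, pass to $\gr_{\mathcal F}H\cong U(\mathfrak g)$ using \cite[Proposition~3.4(a)]{GZ}, apply Latysev--Passman to conclude that $\gr_{\mathcal F}H$ is commutative, and then lift commutativity back to $H$. The gap is in this last step.

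Your argument there reads: ``the leading symbol of $[a,b]$ vanishes, which forces $[a,b]\in\fm^j$ for every $j$.'' But commutativity of $\gr_{\mathcal F}H$ only tells you that if $a\in\fm^p\setminus\fm^{p+1}$ and $b\in\fm^q\setminus\fm^{q+1}$ then $ab-ba\in\fm^{p+q+1}$; it says nothing about the symbol of $[a,b]$ in degree $p+q+1$ or beyond, since that symbol has no reason to be a commutator in $\gr_{\mathcal F}H$. The implication ``$\gr_\fm A$ commutative and $\bigcap_i\fm^i=0$ $\Rightarrow$ $A$ commutative'' is in fact false for augmented $k$-algebras in general. Take $A=k\langle x,y\rangle/(xy-yx-x^3)$ with augmentation $\epsilon(x)=\epsilon(y)=0$ and $\fm=(x,y)$. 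This is the Ore extension $k[x][y;\delta]$ with $\delta(x)=-x^3$, hence a domain with PBW basis $\{x^iy^j\}$; a straightening argument gives $\fm^n=\operatorname{span}\{x^iy^j:i+j\ge n\}$, so $\gr_\fm A\cong k[\bar x,\bar y]$ is commutative and $\bigcap_n\fm^n=0$, yet $[x,y]=x^3\ne 0$.

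The paper handles this step by citing \cite[Lemma~3.5]{GZ}, which uses the Hopf algebra structure (not merely the filtered-ring structure) to pass from commutativity of $\gr_\fm H$ to commutativity of $H$. So you have correctly identified the delicate point, but the proposed resolution by leading symbols does not work; the Hopf structure is genuinely needed here.
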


\begin{proof} Note that, in the light of Lemma \ref{yylem2.1}, if
$H$ is connected graded as an algebra then
$\bigcap_{i=1}^{\infty} \fm^i=\{0\}$. Hence we assume
that $\bigcap_{i=1}^{\infty} \fm^i=\{0\}$.

Let ${\mathcal F}$ be the standard filtration, as defined
in \eqref{E1.1.1} in $\S$\ref{yysec1.3}. If $H$ is PI, then so is
$\gr_{\mathcal F} H$. By \cite[Proposition 3.4(a)]{GZ},
$\gr_{\mathcal F} H \cong U({\mathfrak g})$. Hence, by \cite{Pas},
$\gr_{\mathcal F} H$ is commutative. By \cite[Lemma 3.5]{GZ},
$H$ is commutative, as required.
\end{proof}

\begin{remark}
\label{yyrem2.10}
The above theorem fails when $k$ has positive characteristic.
Consider, for example, the enveloping algebra $H$ of the
3-dimensional Heisenberg Lie algebra over a field of
positive characteristic. Then $H$ is connected graded as
an algebra, and is a finite module over its centre and is
therefore PI. However $H$ is not commutative.
\end{remark}

\section{Almost commutative Hopf algebras}
\label{yysec3}

At this point we take a small diversion to record a
variation of the strand initiated in Theorem \ref{yythm2.9},
using similar ideas to those employed above,  and making
further use of \cite[$\S3$]{GZ}. The outcome can be viewed
as an un-graded version of Lemma \ref{yylem2.6}. Unlike in
most of the rest of the paper, there is no connectedness
hypothesis on $H$ in the following results; however we
continue to assume that $k$ is algebraically closed of
characteristic 0.

\begin{proposition}
\label{yypro3.1}
Let $H$ be a Hopf $k$-algebra with $\GKdim H = n < \infty$, and
let $\mathfrak{m} = \mathrm{ker} \epsilon$.
\begin{enumerate}
\item[(1)]
$\dim \fm/\fm^2 \leq \GKdim H$.
\item[(2)]
Suppose that $H$ is affine or noetherian.
Then the following statements are equivalent:
\begin{enumerate}
\item[(a)]
$\dim \fm/\fm^2 = \GKdim H$.
\item[(b)]
$\mathfrak{m}$ contains a unique minimal prime $P$ of $H$, 
with $H/P \cong \mathcal{O}(G)$ for a connected
algebraic group $G$ of dimension $n$.
\item[(c)]
$H$ has a commutative factor algebra $A$ with $\GKdim A = n$.
\end{enumerate}
\end{enumerate}
\end{proposition}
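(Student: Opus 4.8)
The plan is to combine the filtration machinery from §\ref{yysec1.3} with the associated-graded structure theorem $\gr_{\mathcal F}H\cong U(\mathfrak g)$ from \cite[Proposition 3.4(a)]{GZ}, exactly as in the proof of Theorem \ref{yythm2.9}, but now without any connectedness hypothesis on $H$ itself. For part (1), I would take $\mathcal F$ to be the standard filtration associated to $\fm=\ker\epsilon$, so that $\gr_{\mathcal F}H\cong\bigoplus_i\fm^i/\fm^{i+1}$. The key point is that $\fm/\fm^2=\mathfrak g_1$ is the space of primitives/degree-one generators of this graded Hopf algebra, and by \cite[Proposition 3.4(a),(b)]{GZ} we get $\gr_{\mathcal F}H\cong U(\mathfrak g)$ with $\dim\mathfrak g\le\GKdim H$. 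Since $\dim\fm/\fm^2=\dim\mathfrak g_1\le\dim\mathfrak g$, and $\GKdim\gr_{\mathcal F}H\le\GKdim H$ by \cite[Lemma 6.5]{KL}, the inequality $\dim\fm/\fm^2\le\GKdim H$ follows once one checks $\GKdim U(\mathfrak g)=\dim\mathfrak g$ via \cite[Example 6.9]{KL}. One subtlety: $\bigcap_i\fm^i$ need not vanish here, so $\gr_{\mathcal F}H$ may not determine $H$ — but for the inequality in (1) we only need $\gr_{\mathcal F}H$, which is always defined.

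For part (2) I would prove the cycle of implications (a)$\Rightarrow$(c)$\Rightarrow$(b)$\Rightarrow$(a), mimicking Lemma \ref{yylem2.6}(3) in the un-graded setting. The natural candidate for the commutative factor is $A:=H/\langle[H,H]\rangle=H_{\mathrm{ab}}$, which is a commutative affine (or noetherian) Hopf algebra, hence $A\cong\mathcal O(G)$ for an affine algebraic group $G$ by standard Hopf–algebraic-group duality. To show (a)$\Rightarrow$(c): under hypothesis (a) I expect the equality $\dim\fm/\fm^2=\GKdim H=\dim\mathfrak g$ to force $\mathfrak g=\mathfrak g_1$, i.e.\ $\mathfrak g$ abelian and $\gr_{\mathcal F}H$ commutative; the minimal-generator count $d(H)=\dim\fm/\fm^2$ then shows the abelianization has the same GK-dimension $n$, giving (c). For (c)$\Rightarrow$(b): a commutative affine factor $A$ of GK-dimension $n$ is $\mathcal O(G)$, and since $\GKdim A=\GKdim H=n$ the kernel $\langle[H,H]\rangle$ lies inside a prime (the identity component corresponds to a minimal prime $P$ with $\dim H/P=n$); connectedness of $G$ is arranged by passing to the component of the identity. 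For (b)$\Rightarrow$(a): if $H/P\cong\mathcal O(G)$ with $\dim G=n=\GKdim H$, then the cotangent space $\fm/\fm^2$ surjects onto that of $\mathcal O(G)$ at the identity, whose dimension is $\dim G=n$; combined with (1) this forces equality.

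The main obstacle will be the implications involving the minimal prime $P$ and the identification $H/P\cong\mathcal O(G)$ — that is, transferring the clean graded argument of Lemma \ref{yylem2.6}(3), which used that $H$ is a \emph{domain} and invoked \cite[Proposition 3.15]{KL}, to the present setting where $H$ need not be a domain and need not be graded. Specifically, proving that equality of GK-dimension between $H$ and its commutative factor pins down a \emph{unique} minimal prime contained in $\fm$ requires care: I would use that $\GKdim H/P=\GKdim H$ forces $P$ to be a minimal prime, that $H/P$ inherits a commutative Hopf structure after quotienting by the relevant ideal, and that the counit descends so that $\fm/P$ is the augmentation ideal of $\mathcal O(G)$. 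The connectedness of $G$ (equivalently, uniqueness of the minimal prime inside $\fm$) is the delicate geometric point, handled by noting that a second minimal prime in $\fm$ would produce either extra GK-dimension or a disconnection incompatible with the counit sitting on a single component. The GK-dimension bookkeeping itself, via \cite[Lemma 6.1(b)]{KL} and \cite[Proposition 3.15]{KL}, is routine once the structural identifications are in place.
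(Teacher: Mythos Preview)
Your part (1) matches the paper's argument exactly.

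For part (2), however, there is a genuine gap, and the paper's approach differs from yours in an essential way. You attempt the cycle (a)$\Rightarrow$(c)$\Rightarrow$(b)$\Rightarrow$(a), whereas the paper does (b)$\Rightarrow$(c) (trivial), (c)$\Rightarrow$(a), and the substantial step (a)$\Rightarrow$(b). Two problems arise in your route.

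First, in (a)$\Rightarrow$(c) the claim ``$d(H)=\dim\fm/\fm^2$'' is a statement about connected graded (or local) algebras (Lemma~\ref{yylem1.1}); it is not available for an arbitrary affine Hopf algebra. The conclusion you want, $\GKdim H_{\mathrm{ab}}=n$, does hold, but for a different reason: $[H,H]\subseteq\fm^2$, so $\fm/\fm^2\cong\fm_{\mathrm{ab}}/\fm_{\mathrm{ab}}^2$, and then smoothness of $H_{\mathrm{ab}}=\mathcal O(G)$ in characteristic~0 gives $\dim G=\dim\fm_{\mathrm{ab}}/\fm_{\mathrm{ab}}^2=n$. (The observation that $\mathfrak g$ is abelian is correct but not needed.) This is essentially the paper's (c)$\Rightarrow$(a) argument run in reverse.

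Second, and more seriously, your (c)$\Rightarrow$(b) does not produce the minimal prime. Pulling back the ideal of the identity component $G_0\subseteq G$ from $H_{\mathrm{ab}}$ gives a completely prime ideal $P\subseteq\fm$ with $H/P\cong\mathcal O(G_0)$ of GK-dimension $n$, but nothing in your sketch forces $P$ to be a \emph{minimal} prime of $H$, nor the unique one inside $\fm$: a smaller prime $Q\subsetneq P$ could exist with $\GKdim H/Q=n$ as well, and you have no handle on primes not containing $[H,H]$. The paper avoids this by working in the other direction, proving (a)$\Rightarrow$(b) with the explicit candidate
\[
P \;=\; J_{\fm}\;:=\;\bigcap_{i\ge 0}\fm^i.
\]
This is the key idea you are missing. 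Since $\gr_{\mathcal F}H\cong\gr_{\mathcal F}(H/J_{\fm})\cong U(\mathfrak g)$ is a domain and the filtration is separated on $H/J_{\fm}$, the quotient $H/J_{\fm}$ is itself a domain; it is a Hopf algebra by \cite[Lemma~4.7]{LWZ}. One then compares $H/J_{\fm}$ with its abelianisation $(H/J_{\fm})_{\mathrm{ab}}=\mathcal O(G)$: both have GK-dimension $n$, and because $H/J_{\fm}$ is an Ore domain of finite GK-dimension, \cite[Proposition~3.15]{KL} forces $H/J_{\fm}=(H/J_{\fm})_{\mathrm{ab}}$. Thus $H/J_{\fm}$ is already commutative, $G$ is connected, and $J_{\fm}$ is the required prime. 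Your sketch never isolates $J_{\fm}$ and therefore cannot access the domain and Ore properties that drive this step.
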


\begin{proof} (1)
Let $\mathcal{F}$ be the filtration $\{\fm^i : i \geq 0 \}$ of $H$,
so that $\gr_{\mathcal{F}} H = \oplus_{i \geq 0} \fm^i/\fm^{i+1}$.
Then $\gr_{\mathcal{F}} H \cong U(\mathfrak{g})$ for some Lie
algebra $\mathfrak{g}$ by \cite[Proposition 3.4(a)]{GZ}. Moreover,
$$\begin{aligned}
\GKdim H \; &\geq \; \dim_k \mathfrak{g} \quad \quad \quad
&\textrm{\cite[Proposition 3.4(b)]{GZ}} \\
&\geq  \dim_k \fm / \fm^2, \quad \quad
&\textrm{since } \fm/ \fm^2 \hookrightarrow \mathfrak{g},
\end{aligned}
$$
where the inclusion in the last line is given by \cite[Lemma 3.3(d)]{GZ}.

(2) $(b) \Rightarrow (c)$: Trivial.

$(a) \Rightarrow (b)$: Suppose that $\dim_k \fm / \fm^2 = n.$  Set
$J_{\fm} := \bigcap_{i \geq 0} \fm^i$. Then
\begin{equation}
\label{E3.1.1}\tag{E3.1.1}
\gr_{\mathcal{F}}H \cong U(\mathfrak{g}) \cong \gr_{\mathcal{F}}(H/J_{\fm}),
\end{equation}
and
\begin{equation}
\label{E3.1.2}\tag{E3.1.2}
\dim_k \mathfrak{g} = \GKdim \gr_{\mathcal{F}} H \leq \GKdim H = n,
\end{equation}
by \cite[Lemma 6.5 and Example 6.9]{KL}. By \cite[Lemma 3.3(d)]{GZ},
$\fm / \fm^2 \subseteq \mathfrak{g}$, so that, from (a) and
\eqref{E3.1.2},
$$ \GKdim \gr_{\mathcal{F}} H = n. $$
Since $\GKdim H/J_{\fm} \geq \GKdim \gr_{\mathcal{F}} H $ by
\eqref{E3.1.1} and \cite[Lemma 6.5]{KL}, we deduce that
\begin{equation}
\label{E3.1.3}\tag{E3.1.3} \GKdim H = \GKdim H/J_{\fm}.
\end{equation}
Moreover, $H/J_{\fm}$ is a domain, since its associated graded
algebra $\gr_{\mathcal{F}} H/J_{\fm}$ is a domain and
$\mathcal{F}$ is separating on $H/J_{\fm}$. Hence, given
\eqref{E3.1.3}, $J_{\fm}$ is a minimal prime ideal of $H$.

Now $H/J_{\fm}$ is a Hopf algebra by \cite[Lemma 4.7]{LWZ},
so that $(H/J_{\fm})_{ab}$ is also a Hopf algebra by
\cite[Lemma 3.7]{GZ}. If $H$ is noetherian, then so is
$(H/J_{\fm})_{ab}$, and hence this commutative Hopf algebra
is affine by Molnar's theorem \cite{Mol}. Thus, since $k$
has characteristic 0, and whether $H$ is affine or noetherian,
$(H/J_{\fm})_{ab}$ is the coordinate ring of an algebraic
group $G$, \cite{Wat}; and as such, $(H/J_{\fm})_{ab}$  has
finite global dimension,
\cite[\S11.4 and \S11.6]{Wat}. More precisely, by, for
example, \cite[Theorem 5.2]{Hu},
\begin{equation}
\label{E3.1.4}\tag{E3.1.4}
\dim G = \GKdim \, (H/J_{\fm})_{ab} = \gldim \, (H/J_{\fm})_{ab}
= \dim_k \mathfrak{n}/\mathfrak{n^2},
\end{equation}
where $\mathfrak{n}$ denotes the augmentation ideal of
$(H/J_{\fm})_{ab}$. But, by the definition of $(H/J_{\fm})_{ab}$,
we have
$$ \mathfrak{n}/\mathfrak{n}^2 = \fm/\fm^2.$$
Therefore, invoking hypothesis (a), all the dimensions in
\eqref{E3.1.4} are equal to $n$. Now $J_{\fm}$ is a prime
ideal of $H$ and $\GKdim H/J_{\fm} = n$ by (a) and
\eqref{E3.1.3}, so $H/J_{\fm}$ is an Ore domain by
\cite[Theorem 4.12]{KL}. Hence, the proper factors of
$H/J_{\fm}$ have Gel'fand-Kirillov dimension strictly
less than $n$ by \cite[Proposition 3.15]{KL}, and we thus
deduce that
$$ H/J_{\fm} \; = \; (H/J_{\fm})_{ab} \; = \; \mathcal{O}(G).$$
That is, $G$ is connected, and (b) is proved, with $P = J_{\fm}$.

$(c)\Rightarrow (a)$: Suppose that $H$ has a commutative factor
algebra $C$ with $\GKdim C = n.$ By (1), it is enough to prove
that $\dim_k \fm/ \fm^2 \geq n.$ Thus we only need to prove
the result for the factor $H_{ab}$ of $H$, which is a Hopf
algebra by \cite[Lemma 3.7]{GZ}. For commutative Hopf algebras
the noetherian and affine hypotheses coincide, by the Hilbert
basis theorem and Molnar's theorem \cite{Mol}, so we can
assume that $H = \mathcal{O}(G)$ is a commutative affine Hopf
algebra with $\GKdim H = n < \infty$. Now $J_{\fm} = 0$ without
loss of generality since we can pass to a suitable prime Hopf
factor of $H$ (namely, the coordinate ring of the connected
component of $1_G$), and in this factor domain Krull's
intersection theorem applies. So by standard commutative
algebra (or see \cite[Proposition 3.6]{GZ}),
$\dim_k \fm/\fm^2 = n$, as required.
\end{proof}

As an immediate consequence of the above proposition we have

\begin{corollary}
\label{yycor3.2}
Let $H$ be an affine prime Hopf algebra of finite GK-dimension.
Suppose that $\dim \fm/\fm^2\geq \GKdim H$.
Then $H$ is commutative.
\end{corollary}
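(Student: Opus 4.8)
The plan is to deduce the corollary almost immediately from Proposition \ref{yypro3.1}, so the proof will be short and the real content will be in recognizing that the hypotheses exactly trigger part (2) of that proposition and that primeness collapses the conclusion. First I would combine the standing hypothesis $\dim \fm/\fm^2 \geq \GKdim H$ with Proposition \ref{yypro3.1}(1), which supplies the reverse inequality $\dim \fm/\fm^2 \leq \GKdim H$ with no extra assumptions. Together these force the equality $\dim \fm/\fm^2 = \GKdim H$, which is precisely statement (a) in Proposition \ref{yypro3.1}(2).

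Next, I would observe that $H$ is assumed affine, so the hypotheses of Proposition \ref{yypro3.1}(2) are met and its equivalences are available. Applying the implication (a)$\Rightarrow$(b), I obtain a unique minimal prime $P$ of $H$ contained in $\fm$ together with an isomorphism $H/P \cong \mathcal{O}(G)$ for a connected algebraic group $G$ of dimension $n = \GKdim H$. The point is that all the genuine work—passing to $\gr_{\mathcal{F}} H \cong U(\mathfrak{g})$, identifying $J_{\fm}$ as a minimal prime, and realizing the abelianization as a coordinate ring via Molnar's theorem—has already been carried out in establishing Proposition \ref{yypro3.1}.

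Finally I would invoke the primeness of $H$: in a prime ring the zero ideal is itself prime, hence it is the unique minimal prime. Since $P$ is a minimal prime of $H$, this gives $P = 0$, whence $H = H/P \cong \mathcal{O}(G)$ is commutative, as required.

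I do not expect a serious obstacle here; the corollary is essentially a specialization of Proposition \ref{yypro3.1}(2). The only care needed is bookkeeping: confirming that the affineness hypothesis is what activates part (2) (noetherianity would serve equally, but is not assumed), and that the primeness hypothesis—rather than any finer information about $G$—is exactly what forces the minimal prime $P$ to vanish so that the factor isomorphism becomes an isomorphism $H \cong \mathcal{O}(G)$.
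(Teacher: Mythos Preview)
Your proposal is correct and matches the paper's intent exactly: the paper gives no separate proof, stating the corollary simply as ``an immediate consequence of the above proposition,'' and your argument---combining part (1) to force equality, invoking (a)$\Rightarrow$(b) via affineness, and then using primeness to conclude $P=0$---is precisely the intended deduction.
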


\begin{remark}
\label{yyrem3.3}
Proposition \ref{yypro3.1} prompts an obvious problem, namely:
to completely describe the Hopf algebras featuring in part (2)
of the proposition. We leave this for the future.
\end{remark}

\section{Hopf algebras that are connected as a coalgebra}
\label{yysec4}

In this section we study a connected Hopf algebra $H$ of finite
Gel'fand-Kirillov dimension $n$, writing $\GKdim H = n$.
The associated graded ring with respect to the
coradical filtration of $H$, whose definition was
recalled in $\S$\ref{yysec1.2}, is denoted by $\gr_c H$.
By \cite[Theorem 6.9 and Corollary 6.10]{Zh},
\begin{enumerate}
\item[(1)]
$H$ and $\gr_c H$ are affine noetherian domains;
\item[(2)]
$\gr_c H$ is isomorphic to a commutative
polynomial ring in $n$ indeterminates;
\item[(3)]
$H$ is Auslander regular of global dimension $n$;
\item[(4)]
$H$ is GK-Cohen-Macaulay.
\end{enumerate}
From (1),(3) and \cite[Lemma 6.1]{BZ}, $H$ is Artin-Schelter regular.
Therefore, by \cite[Theorem 0.3]{BZ},
$H$ is skew Calabi-Yau (CY), with Nakayama automorphism given by
$S^2\circ \Xi^{l}_{\chi}$, where $\Xi^{l}_{\chi}$ denotes the
left winding automorphism of $H$ arising from the right character
$\chi$ of the left integral $\int_H^l$ of $H$.

\subsection{Artin-Schelter regularity of the coalgebra structure}
\label{yysec4.1}
The main result of this
subsection concerns the Artin-Schelter regularity
(or skew CY property) of the coalgebra structure of
$H$. The Artin-Schelter regularity of an artinian coalgebra is
introduced in \cite[Definition 2.1 and Remark 2.2]{HT}. We
now prove Theorem \ref{yythm0.9}. Recall the definition of a
{\it graded Hopf algebra} from $\S$\ref{yysec0.4}. If $A$ is 
a locally finite graded Hopf algebra, then the graded
$k$-linear dual of $A$, denoted by $A^{\circ}$, is also a
graded Hopf algebra.

\begin{theorem}
\label{yythm4.1}
Let $H$ be a connected Hopf algebra with GK-dimension $n < \infty$.
Then, as a coalgebra, $H$ is artinian and Artin-Schelter regular
of global dimension $n$.
\end{theorem}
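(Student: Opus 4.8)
The plan is to reduce everything to the associated graded Hopf algebra $\gr_c H$ and to exploit the duality between connected graded algebras and coalgebras. Since $H$ is connected as a coalgebra, its coradical filtration $\{H_i\}$ is a coalgebra filtration, so $\gr_c H$ is a connected graded Hopf algebra — connected in \emph{both} the algebra and the coalgebra sense. By the results of \cite[Theorem 6.9 and Corollary 6.10]{Zh} recalled at the start of this section, $\gr_c H \cong k[x_1,\dots,x_n]$ as an algebra, and both $H$ and $\gr_c H$ are affine noetherian domains. The key point is that the coalgebra Artin--Schelter regularity of $\gr_c H$, in the sense of \cite{HT}, should be detected by the algebra structure of its graded $k$-linear dual.

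First I would treat the graded case, i.e. the coalgebra $C := \gr_c H$. Form the graded dual Hopf algebra $D := C^\circ$. Because $C$ is commutative as an algebra and connected as a coalgebra, $D$ is cocommutative and connected graded as an algebra; the Cartier--Kostant theorem \cite[Theorem 5.6.5]{Mo} then gives $D \cong U(\fg)$ for a graded Lie algebra $\fg$. Since the graded dual preserves Hilbert series, $\GKdim D = \GKdim \gr_c H = n$, so Theorem \ref{yythm2.2} applies to $D$ and shows that $D$ is Artin--Schelter regular of global dimension $n$. Dualizing through the correspondence of \cite{HT} between connected graded AS-regular algebras and artinian AS-regular coalgebras, I conclude that $C = D^\circ$ is an artinian coalgebra which is Artin--Schelter regular of global dimension $n$.

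The second step is to lift these conclusions from $\gr_c H$ to $H$ itself. Here the coradical filtration plays the role that the standard graded filtration played in $\S\ref{yysec1}$: it is separated and exhaustive, with associated graded coalgebra $\gr_c H = C$. I would establish a coalgebra analogue of Proposition \ref{yypro1.4}, transferring the artinian property, the finiteness of the injective dimension of the trivial comodule, and the Gorenstein-type symmetry of the relevant $\Ext$ (equivalently $\mathrm{Cotor}$) groups from $\gr_c H$ to $H$. Concretely, the cobar complex of $H$ carries a filtration induced by the coradical filtration, yielding a spectral sequence whose $E_1$-page is computed from $\gr_c H$; convergence, together with the AS-regularity of $\gr_c H$ already established, forces the corresponding invariants of $H$ to agree and pins the global dimension at $n$.

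The main obstacle I expect is precisely this filtered-to-graded transfer in the coalgebra setting. Unlike the algebra case of $\S\ref{yysec1}$, where the Rees algebra and the results of \cite{Le, LP} are available off the shelf, here I must dualize that machinery to comodules and verify that the Artin--Schelter symmetry condition of \cite{HT} is stable under the coradical filtration — in particular that no higher $\Ext$ is created in passing from $\gr_c H$ to $H$, and that the one-dimensionality of the top $\Ext$ group is preserved. Once this stability is in hand, the artinian-ness and Artin--Schelter regularity of global dimension $n$ for the coalgebra $H$ follow at once from the graded case.
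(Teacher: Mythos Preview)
Your treatment of the graded case $C=\gr_c H$ is essentially identical to the paper's: dualize to $D=C^\circ$, recognise $D\cong U(\fg)$ with $\dim_k\fg=n$, invoke the Artin--Schelter regularity of $D$ (Theorem~\ref{yythm2.2} or the remarks at the start of \S\ref{yysec4}), and dualize back via \cite{HT}. So far, so good.

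Where you diverge from the paper is in the lifting step, and the divergence is instructive. You propose to build a coalgebra analogue of the Rees/Zariskian machinery of \S\ref{yysec1} directly on the comodule side (cobar complex, filtration spectral sequence), and you rightly flag this as the main obstacle. The paper sidesteps this obstacle entirely: rather than dualizing the \emph{machinery}, it dualizes the \emph{object}. Concretely, it passes to the complete local algebra $H^*:=\lim_{\leftarrow}(H_i)^*$, observes that the $\fm$-adic associated graded of $H^*$ is exactly $(\gr_c H)^\circ\cong U(\fg)$, and then applies the \emph{existing} algebra-side results on Zariskian filtrations and Auslander regularity \cite{LvO1,LvO2,Le} to conclude that $H^*$ is Auslander (hence Artin--Schelter) regular of global dimension $n$ with $\Ext^n_{H^*}(k,H^*)$ one-dimensional. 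The translation back to the coalgebra $H$ is then handled by \cite[Propositions 1.2 and 2.3]{HT}. The artinian-ness and global dimension bound for $H$ are obtained separately, by the dual of the standard complete-local fact (recorded in the paper's footnote) together with \cite[Lemma 8.4(1)]{Wa2} and \cite[Corollary 3]{NTZ}.

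So your plan is not wrong, but it leaves the hardest part as a promissory note. The paper's route shows that no new comodule-side transfer theorem is needed: the correspondence $H\leftrightarrow H^*$ of \cite{HT} lets you run the entire filtered-to-graded argument on the algebra side, where Proposition~\ref{yypro1.4}-style results are already available. If you want to complete your version, you would either have to carry out the cobar spectral sequence argument in full (checking convergence and that the one-dimensionality of the top $\Ext$ survives), or---more efficiently---adopt the paper's trick and work with $H^*$ instead.
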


\begin{proof} First we assume that $H=\gr_c H$,
which is a graded Hopf algebra and is connected graded as an algebra.
Let $H^\circ$ be the graded dual of $H$. By \cite[Theorem 6.9]{Zh},
$H$ is commutative, so $H^\circ$ is a cocommutative Hopf algebra
which is connected graded as an algebra. Consequently, $H^\circ$ is
isomorphic to $U(\mathfrak g)$ for a Lie algebra $\mathfrak{g}$ of
dimension $n$, which is positively graded, since $H^\circ$ is
generated in degree 1 by \cite[Lemma 5.5]{AS2}. (Also see the
discussion in subsection $\S$0.2). Thus, for example, by the discussion
in the opening paragraph of $\S 3$, $H^\circ$ is a noetherian algebra
of global dimension
$n$ satisfying the Artin-Schelter property. By duality, or a graded
version of \cite[Propositions 1.2 and 2.3]{HT}, it follows that $H$ is an
artinian coalgebra with global dimension $n$, satisfying the following
Artin-Schelter condition, see \cite[Definition 2.1 and Remark 2.2]{HT}
and Definition \ref{yydef4.3} below:
\begin{equation}
\label{E4.1.1}\tag{E4.1.1}
{\text{Ext}}^i_{H-comod}(H,k)=\begin{cases} 0 & i\neq n,\\
k& i=n.\end{cases}
\end{equation}
Here $H-comod$ (respectively, $comod-H$) denotes the category of
left (respectively, right) $H$-comodules.

Now, for general $H$ as in the theorem, let $K=\gr_c H$. By the
first paragraph, the coalgebra
$K$ is artinian of global dimension $n$. Hence
\footnote{This is dual to the following fact in the algebra
setting: if $A$ is complete and
$\gr_{\fm} A$ is noetherian and has finite global dimension, then
$A$ is noetherian and has finite global dimension
\cite[Theorem I.5.7 and $\S$I.7.2, Corollary 2]{LvO1}.}, so is $H$.
Since $K$ has global dimension $n$, the primitive cohomology dimension of
$H$ is no more than $n$ by \cite[Lemma 8.4(1)]{Wa2}. Since $H$ is
connected and artinian, the primitive cohomology dimension of $H$ equals
the global dimension of $H$ by \cite[Corollary 3]{NTZ}.

It remains to show that $H$ satisfies the Artin-Schelter condition
\eqref{E4.1.1}. Let $H^*$ be the complete ring $\lim_{i\to\infty}
(H_i)^*$, where $\{H_i : i \geq 0\}$ is the coradical filtration of $H$.
Then $H^*$ is a local noetherian algebra \cite[Proposition 1.1]{HT}.
Further, $H^*$ is a filtered algebra with $\fm$-adic filtration
where $\fm$ is the maximal ideal of $H^*$. It is easy to check
that $\gr_{\mathfrak m} H^*$ is the graded dual of $K(=\gr_c H)$, which is denoted
by $K^\circ$. Since $K^\circ$ is noetherian, the $\fm$-filtration
of $H^*$ is a Zariskian filtration  \cite[$\S$II.2.2, Proposition 1]{LvO1}.
By the first paragraph, $K^\circ\cong U(\mathfrak g)$, which is Auslander
regular. Hence $H^*$ is Auslander regular \cite[$\S$III.2.2, Theorem 5]{LvO1}. Since
$\Ext^n_{K^\circ}(k,K^\circ)=k$, which is one-dimensional,
$\Ext^n_{H^*}(k, H^*)$ is one-dimensional by \cite[Theorem 4.7]{LvO2}.
By a local version of
\cite[Theorem 6.3]{Le} (the proof of \cite[Theorem 6.3]{Le} works for the
local version), local Auslander regular algebras are Artin-Schelter
regular. So $H^*$ is Artin-Schelter regular of global
dimension $n$. By \cite[Proposition 2.3]{HT}, $H$ satisfies the
Artin-Schelter condition \eqref{E4.1.1}, as required.
\end{proof}

The Calabi-Yau property of a coalgebra is defined and studied in
\cite{HT}, with the Nakayama automorphism of a coalgebra being defined
there also. Every connected graded Artin-Schelter regular algebra is
skew CY as an algebra by \cite[Lemma 1.2]{RRZ}. Following \cite[Theorem 3.2]{HT},
every Artin-Schelter regular connected coalgebra is also called
skew Calabi-Yau. In this connection, we propose a conjecture dual to
\cite[Theorem 0.3]{BZ}.

\begin{conjecture}
\label{yycon4.2}
Let $H$ be a connected Hopf algebra of finite GK-dimension, then
the Nakayama automorphism of the coalgebra $H$ is given by
$S^2$.
\end{conjecture}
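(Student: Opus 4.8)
The plan is to establish the coalgebra analogue of \cite[Theorem 0.3]{BZ} and then exploit the fact that a connected coalgebra has no nontrivial grouplike elements. In the algebra setting, \cite[Theorem 0.3]{BZ} expresses the (algebra) Nakayama automorphism of an Artin--Schelter regular Hopf algebra as $S^2 \circ \Xi^l_\chi$, where the winding automorphism $\Xi^l_\chi$ is built from the character $\chi$ of the left homological integral $\int_H^l = \Ext^n_H(k,H)$, a one-dimensional right $H$-module. Dually, for the coalgebra structure of $H$ --- which is Artin--Schelter regular of global dimension $n$ by Theorem \ref{yythm4.1} --- I would define a \emph{coalgebra homological integral} as the one-dimensional $H$-comodule $\Ext^n_{H\text{-}comod}(k,H)$, and prove that the coalgebra Nakayama automorphism $\nu$ has the form $\nu = S^2 \circ \Omega_g$, where $\Omega_g$ is the \emph{co-winding} coalgebra automorphism $h \mapsto gh$ (or its right-handed variant) attached to the grouplike element $g \in H$ corresponding to that one-dimensional comodule.

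Granting this formula, the conclusion is immediate. Since $H$ is connected, its coradical is $H_0 = k$, so the only grouplike element of $H$ is $1$. Hence the coalgebra integral is forced to be the trivial comodule, $g = 1$, the co-winding $\Omega_g$ is the identity, and therefore $\nu = S^2$, as asserted. This is precisely where the connectedness hypothesis does its work, and it explains the asymmetry with the algebra case: although the algebra winding $\Xi^l_\chi$ is typically \emph{non}trivial for connected Hopf algebras (the character group $\mathrm{Alg}(H,k)$ is large), the dual co-winding lives in the grouplike group $G(H)$, which collapses to $\{1\}$.

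To make the dual formula rigorous I would work through the complete local algebra $H^* = \lim_{i\to\infty}(H_i)^*$ from the proof of Theorem \ref{yythm4.1}. There $H^*$ is local, noetherian and Auslander regular, with $\gr_{\fm} H^* \cong K^\circ \cong U(\mathfrak{g})$ for the nilpotent graded Lie algebra $\mathfrak g$ of $\S$\ref{yysec4.1}; the coalgebra Nakayama automorphism of $H$ should dualize to the algebra Nakayama automorphism of $H^*$, and a local version of \cite[Theorem 0.3]{BZ}, compatible with the Zariskian $\fm$-adic filtration as in \cite{LvO1, LvO2}, should compute the latter as $S^2_{H^*}$ composed with the winding from the integral character of $H^*$. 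Transferring through the duality identifies that integral character with a grouplike of $H$, so connectedness again makes it trivial; on the associated graded one checks compatibility using the Nakayama-automorphism base-change \cite[Lemma 1.5]{RRZ}, noting that $\gr_c H$ is commutative, so both $S^2$ and the Nakayama automorphism restrict to the identity there.

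The main obstacle is precisely the first step: developing the coalgebra homological integral and proving the dual formula $\nu = S^2 \circ \Omega_g$, together with the identification of the integral datum as an \emph{actual} grouplike of $H$ rather than merely an abstract one-dimensional comodule of the completed dual. The duality bookkeeping --- tracking left versus right comodules, $S$ versus $S^{-1}$, and the continuity needed so that a character of $H^*$ corresponds to a grouplike of $H$ --- is where the genuine difficulty lies; once these are settled, the triviality of $G(H)$ for connected $H$ finishes the argument cheaply.
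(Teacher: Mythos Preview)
This statement is labeled a \emph{Conjecture} in the paper and is \emph{not} proved there. Immediately after stating it, the authors remark only that ``Conjecture~\ref{yycon4.2} should be a special case of a more general result about the Nakayama automorphism of a coalgebra $H$ when $H$ is a Hopf algebra,'' and they observe, via \eqref{E4.1.1} and Definition~\ref{yydef4.3}, that the co-integrals are trivial in the connected case. That is the full extent of the paper's treatment.

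Your outline is therefore exactly the strategy the paper gestures at: dualise \cite[Theorem~0.3]{BZ} to obtain a formula $\nu = S^2 \circ \Omega_g$ with $g$ the grouplike encoding the co-integral, and then use connectedness to force $g=1$. The paper itself supplies the ``easy half'' of this plan --- triviality of the co-integral in the connected case --- but, like you, stops short of the hard half.

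The gap you flag in your final paragraph is the genuine obstruction, and it is not minor. You would need (i) a proof that the coalgebra Nakayama automorphism in the sense of \cite{HT} is expressible as $S^2$ composed with a co-winding, and (ii) a rigorous identification of the co-integral datum with an honest grouplike element of $H$ (not merely a character of the completed dual $H^*$). Neither the paper nor the references it cites contain such a result; indeed, the authors explicitly leave it as a conjecture dual to \cite[Theorem~0.3]{BZ}. Your passage through $H^*$ and the Zariskian filtration is a reasonable line of attack, but the transfer of the Nakayama automorphism across the duality $H \leftrightarrow H^*$ and through the filtration (so that \cite[Lemma~1.5]{RRZ} applies in this completed, non-graded setting) requires substantial new work. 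Until that is done, what you have written is a heuristic for why the conjecture should be true --- consistent with the paper's own intuition --- rather than a proof.
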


If $H$ is a Hopf algebra that is Artin-Schelter Gorenstein
as a coalgebra, then we can define the co-integral as follows.

\begin{definition}
\label{yydef4.3}
Let $H$ be a Hopf algebra satisfying the following
Artin-Schelter condition \cite[Definition 2.1]{HT}
for the coalgebra structure of $H$:
there is an integer $n$ such that
\begin{equation}
\label{E4.3.1}\tag{E4.3.1}
{\text{Ext}}^i_{comod-H}(H,k)=\begin{cases} 0 & i\neq n,\\
S& i=n,\end{cases}
\end{equation}
and
\begin{equation}
\label{E4.3.2}\tag{E4.3.2}
{\text{Ext}}^i_{H-comod}(H,k)=\begin{cases} 0 & i\neq n,\\
T& i=n\end{cases}
\end{equation}
where $S$ and $T$ are 1-dimensional $H$-bi-comodules.
Then the {\it right co-integral} of $H$ is defined to be
the $H$-bi-comodule $S$ in \eqref{E4.3.1} and the
{\it left co-integral} of $H$ is defined to be the
$H$-bi-comodule $T$ in \eqref{E4.3.2}. We say the
co-integrals are trivial if $S$ and $T$ are isomorphic
to the trivial bi-comodule $k$.
\end{definition}

In the connected case, $H$ has trivial co-integrals by \eqref{E4.1.1}.
Conjecture \ref{yycon4.2} should be a special case of a more general
result about the Nakayama automorphism of a coalgebra $H$ when $H$
is a Hopf algebra.

\subsection{The theorem of Latysev-Passman for connected Hopf algebras}
\label{yysec4.2}
As discussed in $\S$\ref{yysec2.6}, Latysev \cite{Lat} and  Passman
\cite{Pas} proved that the enveloping algebra $\mathcal {U}(\mathfrak{g})$
of a Lie $k$-algebra $\mathfrak{g}$ satisfies a polynomial identity
if and only if $\mathfrak{g}$ is abelian. In this subsection we prove an
extension of this result to all connected Hopf algebras. Note that
not every connected Hopf algebra is isomorphic as an algebra to
an enveloping algebra, by Theorem \ref{yythm0.5}(2).

Let $\{H_i : i \geq 0\}$ be the coradical filtration of
the connected Hopf algebra $H$, so $H=\bigcup_{i\geq 0} H_i$.
Then $H_1=k\oplus P(H)$ where $P(H)$ denotes the space of
primitive elements of $H$. Note that $P(H)$ is a Lie
algebra with bracket $[x,y] := xy - yx$ for all
$x,y \in P(H),$ and that $U(P(H))$ is a Hopf subalgebra
of $H$ by \cite[Corollary 5.4.7]{Mo}. For the following
lemma, define $\delta(t)=\Delta(t)-1\otimes t-t\otimes 1$
for any $t\in H$.

\begin{lemma}
\label{yylem4.4}
Let $H$ be a connected Hopf algebra and suppose that the Lie algebra
$P(H)$ is abelian. Fix $i \geq 1$.
\begin{enumerate}
\item[(1)]
 For any $x\in P(H)$ and $y\in H_i$, $[x,y]\in H_{i-1}$.
\item[(2)]
Let $x\in P(H)$ and $y\in H_i$. Then,
for any integer $j$ with $j\geq i$, $x^{j} y=y_{x,j} x$ for some $y_{x,j}\in H$.
\item[(3)]
The multiplicatively closed set of monomials
$\langle P(H)\setminus \{0\} \rangle$ is an Ore set of
regular elements in $H$.
\end{enumerate}
\end{lemma}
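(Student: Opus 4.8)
The plan is to establish (1) by induction on $i$, to deduce from it that $\mathrm{ad}_x:=[x,-]$ is locally nilpotent on $H$, and then to read off (2) and (3) essentially formally.

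The basic tool for (1) is a coderivation identity: if $x\in P(H)$ and the reduced coproduct of $y$ is $\delta(y)=\sum y'\otimes y''$, then expanding $\Delta(xy)=\Delta(x)\Delta(y)$ and $\Delta(yx)=\Delta(y)\Delta(x)$ using $\Delta(x)=x\otimes1+1\otimes x$ gives $\delta([x,y])=\sum[x,y']\otimes y''+\sum y'\otimes[x,y'']$. I would then induct on $i$. The base case $i=1$ is exactly where the hypothesis enters: since $H_1=k\oplus P(H)$, we have $[x,y]\in[x,P(H)]=0\subseteq H_0$ because $P(H)$ is abelian. For the inductive step, recall that in a connected coalgebra $\delta(H_i)\subseteq\sum_{a+b=i,\,a,b\ge1}H_a\otimes H_b$, so in the identity above each $y'\in H_a$ and $y''\in H_b$ with $a,b\le i-1$; the inductive hypothesis then yields $[x,y']\in H_{a-1}$ and $[x,y'']\in H_{b-1}$, whence $\delta([x,y])\in\sum_{c+d=i-1}H_c\otimes H_d$.

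The step I expect to be the main obstacle is the descent from this containment for $\delta([x,y])$ back to the membership $[x,y]\in H_{i-1}$. For this I would use the description of the coradical filtration of a connected Hopf algebra by iterated reduced coproducts, namely $h\in H_m$ if and only if $\bar\delta^{(m)}(\bar h)=0$ in $(H/k)^{\otimes(m+1)}$, where $\bar\delta$ is the reduced coproduct on $H/k$. Applying $\bar\delta^{(i-2)}\otimes\mathrm{id}$ to a summand in $H_c\otimes H_d$ with $c+d=i-1$, that summand is annihilated: if $c=0$ or $d=0$ the relevant factor lives in $\bar H_0=0$, while if $c,d\ge1$ then $c\le i-2$, so $\bar\delta^{(c)}$, and hence $\bar\delta^{(i-2)}$, kills $H_c$. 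Thus $\bar\delta^{(i-1)}(\overline{[x,y]})=0$ and $[x,y]\in H_{i-1}$, completing (1). One could instead argue from the defining relation $H_m=\Delta^{-1}(H_0\otimes H+H\otimes H_{m-1})$, but the symmetric iterated-coproduct formulation keeps the bookkeeping cleanest.

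For (2), iterating (1) gives $\mathrm{ad}_x^k(y)\in H_{i-k}$; in particular $\mathrm{ad}_x^{i-1}(y)\in H_1$, and then $\mathrm{ad}_x^i(y)=[x,\mathrm{ad}_x^{i-1}(y)]\in[x,H_1]=0$ again by abelianness, so $\mathrm{ad}_x^k(y)=0$ for all $k\ge i$. Since left multiplication $L_x$ commutes with $\mathrm{ad}_x$, the identity $x^j y=\sum_{k\ge0}\binom{j}{k}\mathrm{ad}_x^k(y)\,x^{j-k}$ holds and truncates at $k=i-1$; for $j\ge i$ every surviving term carries a factor $x^{j-k}$ with $j-k\ge1$, so factoring one $x$ to the right yields $x^jy=y_{x,j}x$ with $y_{x,j}=\sum_{k=0}^{i-1}\binom{j}{k}\mathrm{ad}_x^k(y)x^{j-k-1}$. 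Finally, for (3) multiplicative closure is immediate from the definition, and regularity is immediate because $H$ is a domain (as recorded at the start of \S\ref{yysec4}), so any product of nonzero primitives is nonzero, hence a non-zero-divisor. For the Ore conditions it suffices, by multiplicativity of the Ore property, to treat a single nonzero primitive $x$: given $a\in H$, choose $i$ with $a\in H_i$, and part (2) gives $x^i a=a'x$, which is the left Ore relation $(x^i)a=a'(x)$ with $x^i\in\langle P(H)\setminus\{0\}\rangle$; the symmetric identity $a x^i=x a''$, obtained the same way from $R_x=L_x-\mathrm{ad}_x$, supplies the right Ore relation. Hence $\langle P(H)\setminus\{0\}\rangle$ is a left and right Ore set of regular elements.
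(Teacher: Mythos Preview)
Your proof is correct and follows essentially the same architecture as the paper's: for (1) you both induct on $i$ via the coderivation identity $\delta([x,y])=[x\otimes1+1\otimes x,\delta(y)]$, and for (3) you both deduce the Ore condition from (2) together with the fact that $H$ is a domain. The paper handles the descent step in (1) more tersely, simply observing that $\delta([x,y])\in\sum_{t=1}^{i-1}H_t\otimes H_{i-1-t}\subseteq H_0\otimes H+H\otimes H_{i-2}$ and invoking the defining recursion $H_{i-1}=\Delta^{-1}(H_0\otimes H+H\otimes H_{i-2})$; your iterated reduced-coproduct argument is a valid alternative, though slightly heavier than needed.

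The one genuine difference is in (2). The paper proves (2) by a second induction on $i$, writing $x^jy=yx^j+\sum_{s=0}^{j-1}x^s z\,x^{j-1-s}$ with $z=[x,y]\in H_{i-1}$, isolating the single term $x^{j-1}z$ without a right factor of $x$, and applying the inductive hypothesis to it. Your route via the commuting-operator identity $L_x^j=(R_x+\mathrm{ad}_x)^j=\sum_k\binom{j}{k}\mathrm{ad}_x^k R_x^{j-k}$ is cleaner: once you know from (1) that $\mathrm{ad}_x$ is locally nilpotent with $\mathrm{ad}_x^i(y)=0$, the conclusion is immediate with no further induction, and you even get an explicit formula for $y_{x,j}$.
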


\begin{proof} (1) We argue by induction on $i$. Since $P(H)$
is abelian, there is nothing to prove for $i=1$.
Suppose now that $i > 1$ and that the result is proved for
$y \in H_{i-1}.$ Let $x \in P(H)$ and $y\in H_i$. By
definition and \cite[Theorems 5.2.2 and 5.4.1(2)]{Mo}, $\delta(y)\in
\sum_{s=1}^{i-1} H_s\otimes H_{i-s}$. Then
$$\delta([x,y])=
[x\otimes 1+1\otimes x, \delta(y)]\in \sum_{s=1}^{i-1}
[x, H_s]\otimes H_{i-s} +H_s\otimes [x, H_{i-s}].$$
By induction, $[x,H_s]\subset H_{s-1}$ for all $s \leq i-1$,
and $[x,H_1]=0$ since $P(H)$ is abelian.
Hence
$$\delta([x,y])\in \sum_{t=1}^{i-1} H_t\otimes H_{i-1-t}.$$
Thus $[x,y]\in H_{i-1}$, proving the induction step.

(2) We again use induction on $i$. Since $P(H)$ is abelian,
the result holds for $i=1$. Assume now that $i>1$ and that
the result is proved for all $x \in P(H)$ and all $y \in H_{i-1}$.
Let $x \in P(H)$ and $y\in H_i$. By part (1),
$xy=yx+z$ where $z\in H_{i-1}$. By induction, for all $j \geq 0$,
$$x^j y=yx^j+\sum_{s=0}^{j-1} x^s z x^{j-1-s}.$$
Since $z\in H_{i-1}$, for each $j \geq i - 1$ there exists
$z_{x,j} \in H$ such that $x^j z=z_{x,j} x$. Then, for all
$j\geq i$, for some $y' \in H$,
$$x^j y=yx^j+\sum_{s=0}^{j-1} x^s z x^{j-1-s}=
x^{j-1} z+ y' x=z_{x,j-1} x+y' x=y_{x,j} x,$$
and the induction step is proved.

(3) It follows routinely from part (2) that
$\langle P(H)\setminus \{0\} \rangle$ is a left
Ore set in $H$; the argument appears as \cite[Lemma 4.1]{KL},
for example. By symmetry, it is also a right Ore set. Since
$H$ is a domain by \cite[Theorem 6.6]{Zh},
$\langle P(H)\setminus \{0\} \rangle$ consists of regular elements.
\end{proof}

The argument in Lemma \ref{yylem4.4} to deduce parts (2) and (3)
from (1) is essentially the one due to Borho and Kraft which is
reproduced as \cite[Theorem 4.9]{KL}. It is given here for the
reader's convenience.

In the setting of Lemma \ref{yylem4.4}, let $Q(H)$ be the
localization of $H$ obtained by inverting
$\langle P(H)\setminus \{0\} \rangle$.  Note that $H$ is PI if and only
if $Q(H)$ is PI, since if $H$ is PI
then $Q(H)$ is a subalgebra of the central simple quotient
division algebra $Q$ of $H$, which satisfies the same
identities as $H$. With minimal extra effort, we can prove a
local version of the result stated as the $\mathrm{(CCA)}$ case of 
Theorem \ref{yythm0.7}
in the Introduction. We say that a ring $R$ is \emph{locally PI}
if every finite set of elements of $R$ is contained in a subring
of $R$ which satisfies a polynomial identity.

\begin{theorem}
\label{yythm4.5}
Let $H$ be a connected Hopf algebra. Then $H$ is locally PI if and only
if it is commutative.
\end{theorem}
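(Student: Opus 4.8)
The plan is to prove the two implications separately; the direction ``commutative $\Rightarrow$ locally PI'' is immediate, since a commutative ring satisfies the identity $xy-yx=0$ and hence so does every subring. So assume $H$ is locally PI and aim to show $H$ is commutative. The first step is to record that $P(H)$ is abelian: given $x,y\in P(H)$, the subalgebra they generate is finitely generated, hence contained in a PI subring of $H$ and therefore itself PI; but this subalgebra is the image of $U(\mathfrak l)$, where $\mathfrak l\subseteq P(H)$ is the Lie subalgebra generated by $x$ and $y$, so $U(\mathfrak l)$ is PI and Passman's theorem \cite{Pas} forces $\mathfrak l$, and in particular $[x,y]$, to vanish. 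With $P(H)$ abelian, Lemma \ref{yylem4.4} becomes available.

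The technical engine I would isolate is the following assertion: if $B$ is a PI domain and $w\in B$ is such that $\mathrm{ad}_w=[w,-]$ is locally nilpotent on $B$, then $w$ is central. To see this, apply Posner's theorem: the central localization $D_B=B\otimes_{Z(B)}\Fract(Z(B))$ is a division ring, finite dimensional over its centre $Z:=Z(D_B)$, and $D_B=Z\cdot B$. Since $\mathrm{ad}_w$ kills $Z(B)$ it extends to a $Z$-linear derivation of $D_B$, and choosing a $Z$-basis of $D_B$ inside $B$ shows $\mathrm{ad}_w$ is in fact nilpotent on the finite dimensional space $D_B$. Passing to $D_B\otimes_Z\bar Z\cong M_r(\bar Z)$, nilpotence of $\mathrm{ad}_{\bar w}$ forces $\bar w$ to have a single eigenvalue $\lambda$, so the reduced characteristic polynomial of $w$ is $(t-\lambda)^r$ with $\lambda\in Z$ (here characteristic $0$ is essential), whence $(w-\lambda)^r=0$ in the division ring $D_B$ and $w=\lambda\in Z$.

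Applying the engine first to primitives gives that $P(H)$ is central: for $x\in P(H)$ and any $z\in H$, Lemma \ref{yylem4.4}(1) shows $\mathrm{ad}_x(H_i)\subseteq H_{i-1}$, so $\mathrm{ad}_x$ is locally nilpotent on $H$ and hence on $B=\langle x,z\rangle$, giving $[x,z]=0$. The main work is then to propagate centrality up the coradical filtration, proving by induction on $n$ that $H_n\subseteq Z(H)$. Assuming $H_{n-1}\subseteq Z(H)$ and taking $w\in H_n$, the term $\delta(w)\in\sum_{s=1}^{n-1}H_s\otimes H_{n-s}$ lands in $Z(H)\otimes Z(H)$, which is central in $H\otimes H$; a short computation with $\Delta$ then shows $\mathrm{ad}_w$ is a coderivation, so it preserves the coradical filtration and carries primitives to primitives. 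An inner induction on $j$ shows $\mathrm{ad}_w(H_j)=0$: granting $\mathrm{ad}_w(H_{j-1})=0$, the coderivation identity applied to $z\in H_j$ collapses to $\Delta(\mathrm{ad}_w z)=\mathrm{ad}_w z\otimes 1+1\otimes \mathrm{ad}_w z$, so $[w,z]\in P(H)\subseteq Z(H)$; but then $\mathrm{ad}_w$ merely substitutes a central constant for each occurrence of $z$ in a word, so $\mathrm{ad}_w$ is locally nilpotent on $\langle w,z\rangle$, and the engine gives $[w,z]=0$. Hence $w$ is central, completing the induction, and $H=\bigcup_n H_n$ is commutative.

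The step I expect to be the main obstacle is exactly this propagation. Knowing only that the primitives are central, or even that $\gr_c H$ is commutative (which one can obtain separately by running the same ideas on the connected graded Hopf algebra $\gr_c H$), is \emph{not} enough: a purely associative/PI argument cannot rule out quantum-plane-type behaviour, and there is no ``$\gr$ commutative $\Rightarrow$ commutative'' principle for an increasing filtration. What makes the induction close is the Hopf-theoretic input that $\mathrm{ad}_w$ becomes a coderivation once $\delta(w)$ is central, forcing each new commutator to be primitive and hence --- by centrality of the primitives together with the PI engine --- to vanish. Care is needed to order the arguments correctly (abelian $\Rightarrow$ Lemma \ref{yylem4.4} $\Rightarrow$ primitives central $\Rightarrow$ the coradical induction) and to verify the localization details in the engine lemma, but no finiteness of Gel'fand--Kirillov dimension is required at any point.
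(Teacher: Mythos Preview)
Your proof is correct and shares the paper's inductive architecture: reduce to $P(H)$ abelian via Passman, then show $H_n\subseteq Z(H)$ by induction on $n$, the crux being that once $H_{n-1}$ is central, any commutator $[w,z]$ with $w\in H_n$ is forced to be primitive. Where you diverge is in the final ``kill'' step. Having shown that $p:=[w,z]\in P(H)$ is central, the paper uses the Ore localization $Q(H)$ of Lemma~\ref{yylem4.4}(3) to invert $p$: if $p\ne 0$ then $w(p^{-1}z)-(p^{-1}z)w=1$ exhibits a copy of the first Weyl algebra inside $Q(H)$, contradicting PI in characteristic~$0$. You instead prove a self-contained engine (locally nilpotent $\mathrm{ad}_w$ on a PI domain forces $w$ to be central) via Posner's theorem and a reduced-characteristic-polynomial argument. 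Your route bypasses parts (2) and (3) of Lemma~\ref{yylem4.4} entirely, and the engine is a clean reusable statement; the paper's route is more elementary (no Posner/structure theory of CSAs) and, as a bonus, yields the dichotomy recorded in Remarks~\ref{yyrem4.6}(2): either $H$ is commutative or its quotient division ring contains the Weyl algebra. One small point worth making explicit in your write-up is that $H$, and hence every subalgebra $B=\langle w,z\rangle$, is a domain, so that the engine applies; this is \cite[Theorem~6.6]{Zh}.
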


\begin{proof}
If $H$ is commutative, then it is trivially PI.

Conversely, assume that $H$ is locally PI. Every finite
set of elements of $H$ is contained in an affine
Hopf subalgebra of $H$, by \cite[Corollary 3.4]{Zh}. Thus,
in proving that $H$ is commutative, we may assume that $H$
is affine, and hence that $H$ satisfies a
polynomial identity. Since $H=\bigcup_i H_i$, it
suffices to show that elements in $H_i$, for all $i$,
are central in $H$.
For this, we use induction on $i$.

Initial step: Since $H_1=k\oplus P(H)$, the subalgebra of $H$
generated by $H_1$ is $\mathcal {U}(P(H))$.
By \cite[Theorem 1.3]{Pas}, $P(H)$ is abelian, so
Lemma \ref{yylem4.4}
applies. Let $x\in H_1 \setminus k$. We claim, for
each $j \geq 1$, that $x$ commutes with
every element in $H_j$. There is nothing to be
proved when $j=1$. Now assume
$j>1$, and that
\begin{equation}
\label{E4.5.1}\tag{E4.5.1}
[x, H_{j-1}] = 0.
\end{equation}
Let $y\in H_j$, and write $z=[x,y]$. By the definition of the
coradical filtration, $\delta(y)\in H_{j-1}\otimes H_{j-1}$.
By the induction
hypothesis \eqref{E4.5.1},
$$\delta(z)=[1\otimes x+x\otimes 1, \delta(y)]=0.$$
That is, $z\in P(H)$, and in particular, $z$ commutes with $x$.
If $z\neq 0$, then the equation $z=[x,y]$ implies that, in
the localization
$Q(H)$ of $H$,
$$ x(z^{-1} y)-(z^{-1}y) x=1.$$
So $Q(H)$ contains a copy of the first Weyl
algebra, which is not PI since we are in characteristic 0.
This yields a contradiction. Therefore $z=0$ and $x$ commutes
with $y$. So the induction step is proved, and with it the
claim that $H_1$ is central in $H$.

Induction step: Now assume that $i > 1$ and that elements in $H_{i-1}$ are
central in $H$, and let $x\in H_i\setminus H_{i-1}$. We claim that
$x$ commutes with every element in $H_j$ for all $j$. Nothing needs
to be proved when $j\leq i-1$. So assume that $j\geq i$ and that
$[x,H_{j-1}] = 0$. Let $y\in H_j$, so
$\delta(y)\in H_{j-1}\otimes H_{j-1}$. Set $z=[x,y]$.
Then $\delta(z)=[1\otimes x+x\otimes 1, \delta(y)]=0$.
So $z\in P(H)$, and in particular, $z$ commutes with $x$.
By using the same argument as in the initial step, we obtain
$z=0$. This proves the induction step, as required.
\end{proof}

\begin{remarks}
\label{yyrem4.6}
(1) The enveloping algebra of a finite dimensional Lie algebra over a field of
positive characteristic is a finite module over its centre, \cite{Za},
so Theorem \ref{yythm4.5} requires the characteristic 0 hypothesis on $k$.

(2) Since every domain of finite Gel'fand-Kirillov dimension is
an Ore domain by the result of Borho-Kraft
\cite[Theorem 4.12]{KL}, a small recasting of the
above argument yields the following dichotomy:
{\it Let $H$ be a connected Hopf algebra with $\GKdim H < \infty$.
Then either
\begin{enumerate}
\item[(i)]
$H$  is commutative, or
\item[(ii)]
its quotient division ring contains a copy of the first Weyl algebra.
\end{enumerate}}
\end{remarks}

The hypothesis of finite Gel'fand-Kirillov dimension in the above reformulation is
only introduced to guarantee the Ore condition, prompting

\begin{question}
\label{yyque4.7} Let $H$ be a connected Hopf
$k$-algebra. Under what circumstances is $H$ an Ore domain?
\end{question}

Note that, by a well-known result due to Jategaonkar,
\cite[Proposition 4.13]{KL}, $H$, being a domain by \cite[Theorem 6.6]{Zh},
satisfies the Ore condition if it does not contain a free
algebra on two generators.

The above reformulation of Theorem \ref{yythm4.5} also
immediately brings to mind variants of the Gel'fand-Kirillov
conjecture, \cite{AOV}. For example:

\begin{question}
\label{yyque4.8}
Let $H$ be a connected Hopf algebra with $\GKdim H < \infty$.
What conditions on $H$ ensure that its quotient division
ring is a Weyl skew field?
\end{question}

\section{Examples}
\label{yysec5}

The main result of this section is Theorem \ref{yythm5.6},
which answers a number of open questions by giving an example
of a connected Hopf algebra $L$ with $\GK L = 5$, which is also
connected graded as a (Hopf) algebra, but which is not isomorphic
to the enveloping algebra of a Lie algebra.

\subsection{Algebra $H$}
\label{yysec5.1}
First, take two copies of the Heisenberg Lie algebra of dimension 3.
Thus, let
$$\mathfrak{h}_{1}=ka\oplus kb\oplus kc$$
be the Lie algebra with $[a,b]=c$ and $c\in Z(\mathfrak{h}_{1})$. Let
$$\mathfrak{h}_{2}=kz\oplus kw\oplus k{d}$$
where $[z,w]={d}$ and ${d}\in Z(\mathfrak{h}_{2})$.
Set $H:=U(\mathfrak{g})$, where
$\mathfrak{g}=\mathfrak{h}_{1}\oplus\mathfrak{h}_{2}$. By the
Poincar{\'e}-Birkhoff-Witt theorem $H$ has basis $\mathcal{B}$
consisting of the ordered monomials in
$$\{a,b,c,z,w,d\},$$
which we call the $\emph{PBW-generators}$ of $H$.
Since $H$ is an enveloping algebra it comes equipped with a natural
cocommutative coproduct. We show next that there also
exists a non-cocommutative Hopf structure on the algebra $H$.

\subsection{Algebra $J$: defining a new coproduct}
\label{yysec5.2}
Let $J$ be a second copy of the algebra $H$. We now define a
noncocommutative bialgebra structure $(J, \Delta, \epsilon).$

\begin{lemma}
\label{yylem5.1}
Retain the above notation. Then there exist algebra homomorphisms
$\Delta:J\rightarrow J\otimes J$  and $\epsilon:J\rightarrow k$ such that
\begin{equation}\label{E5.1.1}\tag{E5.1.1}
\Delta(a)=1\otimes a+a\otimes 1; \quad \Delta(b)=1\otimes b+b\otimes 1;
\quad \Delta(c)=1\otimes c+c\otimes 1;
\end{equation}
\begin{equation}\label{E5.1.3}\tag{E5.1.2}
\Delta(z)=1\otimes z+a\otimes c-c\otimes a+z\otimes 1;
\end{equation}
\begin{equation}\label{E5.1.4}\tag{E5.1.3}
\Delta(w)=1\otimes w+b\otimes c-c\otimes b+w\otimes 1;
\end{equation}
\begin{equation}\label{E5.1.5}\tag{E5.1.4}
\Delta(d)=1\otimes d+c\otimes c^{2}+c^{2}\otimes c+d\otimes 1;
\end{equation}
and
\begin{equation}\label{E5.1.6}\tag{E5.1.5}
\epsilon(a)=\epsilon(b)=\epsilon(c)=\epsilon(z)=\epsilon(w)=
\epsilon(d)=0.
\end{equation}
With these definitions $(J, \Delta,\epsilon)$ is a bialgebra.
\end{lemma}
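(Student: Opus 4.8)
The plan is to build both structure maps from the universal property of the enveloping algebra $J = U(\mathfrak{g})$: a linear map from $\mathfrak{g}$ into the commutator Lie algebra of an associative algebra $A$ extends to an algebra homomorphism $J \to A$ exactly when it preserves Lie brackets. Thus, to obtain $\Delta$ and $\epsilon$ as algebra homomorphisms, it suffices to verify that the prescribed images of $a,b,c,z,w,d$ satisfy, inside $J \otimes J$ (respectively $k$), the same bracket relations as the PBW-generators: namely $[a,b]=c$ and $[z,w]=d$, with $c$ central in $\mathfrak{h}_1$, $d$ central in $\mathfrak{h}_2$, and $\mathfrak{h}_1$ commuting with $\mathfrak{h}_2$. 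For $\epsilon$ this is immediate, since every generator maps to $0$ and every bracket relation then maps to $0$ in the commutative ring $k$.

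For $\Delta$ the organising observation is that $c$ and $d$ are in fact central in $J$, each being central in its own copy of the Heisenberg algebra and commuting with the other copy. Hence $\Delta(c) = 1 \otimes c + c \otimes 1$ and $\Delta(d) = 1 \otimes d + c \otimes c^2 + c^2 \otimes c + d \otimes 1$ are sums of tensors of central elements, so they are central in $J \otimes J$; this disposes at once of every bracket relation involving $c$ or $d$. What remains is to check $[\Delta(a),\Delta(b)] = \Delta(c)$ and $[\Delta(z),\Delta(w)] = \Delta(d)$, together with the vanishing of the cross-brackets $[\Delta(a),\Delta(z)]$, $[\Delta(a),\Delta(w)]$, $[\Delta(b),\Delta(z)]$, $[\Delta(b),\Delta(w)]$. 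The first relation is routine. I expect the one genuine obstacle to be $[\Delta(z),\Delta(w)] = \Delta(d)$: one expands the product of two four-term expressions in $J \otimes J$ and collects the resulting monomials, repeatedly using centrality of $c,d$ and the commutation of $\mathfrak{h}_1$ with $\mathfrak{h}_2$. Most contributions cancel in pairs; the decisive point is that the terms $ab \otimes c^2$ and $-\,ba \otimes c^2$ (together with their transposes) combine through $ab - ba = c$ to produce exactly the middle summands $c \otimes c^2 + c^2 \otimes c$ of $\Delta(d)$. The cross-brackets vanish by similar but shorter bookkeeping, in which a stray $c \otimes c$ arising from $[a,b]=c$ is cancelled by an opposite contribution.

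Once $\Delta$ and $\epsilon$ are available as algebra homomorphisms, the two coalgebra axioms reduce to the generators. Since $\Delta$, $\Delta \otimes \mathrm{id}$ and $\mathrm{id} \otimes \Delta$ are all algebra homomorphisms and $J$ is generated as an algebra by $a,b,c,z,w,d$, coassociativity $(\Delta \otimes \mathrm{id})\Delta = (\mathrm{id} \otimes \Delta)\Delta$ only has to be confirmed on these six elements. For $a,b,c$ this is the standard computation for primitives; for $z$ it is a direct expansion in $J^{\otimes 3}$, and for $d$ it is the same computation, using $\Delta(c^2) = \Delta(c)^2 = 1 \otimes c^2 + 2\,c \otimes c + c^2 \otimes 1$. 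In each case the two iterated coproducts yield the same collection of tensor monomials. Finally, the counit axiom likewise reduces to the generators and holds because every term of $\Delta(x)$ other than $1 \otimes x$ and $x \otimes 1$ lies in $\ker\epsilon \otimes \ker\epsilon$, hence is annihilated by both $\epsilon \otimes \mathrm{id}$ and $\mathrm{id} \otimes \epsilon$. This shows that $(J,\Delta,\epsilon)$ is a bialgebra.
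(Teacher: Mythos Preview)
Your proof is correct and follows the same underlying strategy as the paper's (given in the Appendix as Lemma~\ref{yylem6.1}): verify that the assignments on generators respect the defining relations of $J=U(\mathfrak{g})$, so that $\Delta$ and $\epsilon$ are algebra homomorphisms, and then check coassociativity and the counit axiom on the generators. The paper phrases the first step as lifting to the free algebra $F$ on six letters and showing $\Delta'(I)\subseteq I\otimes F+F\otimes I$, then works through all fifteen Lie relations one by one. Your organising observation---that $c$ and $d$ are central in $J$, so that $\Delta(c)$ and $\Delta(d)$, being built from $1,c,c^2,d$, are central in $J\otimes J$---is a genuine economy: it dispatches the nine relations $[\,\cdot\,,c]=0$ and $[\,\cdot\,,d]=0$ at once, leaving only $[\Delta(a),\Delta(b)]=\Delta(c)$, $[\Delta(z),\Delta(w)]=\Delta(d)$, and the four cross-brackets between $\{a,b\}$ and $\{z,w\}$. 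The paper does not exploit this and checks every relation by hand. For coassociativity and the counit both arguments are essentially the same; the paper streamlines slightly by working with $\delta(t)=\Delta(t)-t\otimes 1-1\otimes t$ rather than $\Delta$ itself.
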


The proof of the lemma is a straightforward but long
computation. The interested reader can read it in the
Appendix, $\S$\ref{yysec6}.

\begin{proposition}
\label{yypro5.2}
Retain the above notation. Then $J$ is a noncommutative,
noncocommutative connected Hopf algebra with $\GKdim J=6$.
\end{proposition}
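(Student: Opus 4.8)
The plan is to take the bialgebra structure $(J,\Delta,\epsilon)$ of Lemma~\ref{yylem5.1} as given and to verify the five remaining assertions. Three are immediate: as an \emph{algebra} $J$ is nothing but $U(\mathfrak{g})$ with $\mathfrak{g}=\mathfrak{h}_1\oplus\mathfrak{h}_2$ of dimension $6$, so $\GKdim J=\dim_k\mathfrak{g}=6$, and $J$ is noncommutative since $ab-ba=c\neq0$. For noncocommutativity I would read off \eqref{E5.1.3}: writing $\tau$ for the flip, $\Delta(z)-\tau\Delta(z)=2(a\otimes c-c\otimes a)\neq0$, because $a\otimes c$ and $c\otimes a$ are linearly independent in $J\otimes J$ by the PBW basis. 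This leaves connectedness of the coalgebra and the existence of an antipode, and these are the real content.

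For connectedness the key device is a grading. I would give $J$ the $\mathbb{N}$-grading determined on the PBW-generators by
\[
\deg a=\deg b=1,\quad \deg c=2,\quad \deg z=\deg w=3,\quad \deg d=6,
\]
and first check that this is a genuine algebra grading, i.e.\ that all defining relations of $U(\mathfrak{g})$ are homogeneous. The only relations not of the form $[\,\cdot\,,\cdot\,]=0$ are $ab-ba=c$ (both sides of degree $2$) and $zw-wz=d$ (both sides of degree $6$); since $c,d$ are central and $\mathfrak{h}_1,\mathfrak{h}_2$ commute, every remaining relation is a vanishing bracket and so is trivially homogeneous. Thus $J$ is a connected, locally finite $\mathbb{N}$-graded algebra with $J(0)=k$. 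Inspecting the coproduct formulas of Lemma~\ref{yylem5.1}, each generator is sent by $\Delta$ to an element of $J\otimes J$ of the correct total degree (for instance the summands of $\Delta(d)$ all have degree $6$); since $\Delta$ is an algebra map, it is therefore a graded map, and $J$ is a connected graded bialgebra.

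Connectedness of the underlying coalgebra now follows formally. The degree filtration $V_n:=\bigoplus_{i\le n}J(i)$ is a coalgebra filtration, because $\Delta$ graded gives $\Delta(V_n)\subseteq\sum_{i=0}^{n}V_i\otimes V_{n-i}$, and $V_0=J(0)=k$. Since the coradical filtration is contained in every coalgebra filtration \cite[Lemma~5.3.4]{Mo}, the coradical $J_0$ satisfies $J_0\subseteq V_0=k$, so $J_0=k$ and $J$ is connected. Finally, a connected bialgebra automatically has an antipode: one defines $S$ recursively along the coradical filtration by $S(1)=1$ and, for $x\in J_n\cap\ker\epsilon$ with reduced coproduct $\Delta(x)-1\otimes x-x\otimes1=\sum x'\otimes x''$ (where $x',x''\in\ker\epsilon$ have strictly smaller coradical degree), $S(x)=-x-\sum S(x')x''$; this produces the convolution inverse of $\mathrm{id}_J$. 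Hence $J$ is a Hopf algebra, completing the proof.

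I expect the only real obstacle to be the grading bookkeeping: confirming that the weights $(1,1,2,3,3,6)$ simultaneously render the enveloping-algebra relations homogeneous \emph{and} the nonstandard coproducts \eqref{E5.1.3}, \eqref{E5.1.4}, \eqref{E5.1.5} degree-preserving. Once that single compatibility is in hand, both connectedness and the antipode are soft, essentially formal, consequences.
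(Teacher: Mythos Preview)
Your proof is correct and follows essentially the same architecture as the paper's: exhibit a coalgebra filtration with bottom piece $k$, invoke \cite[Lemma 5.3.4]{Mo} to get $J_0=k$, and then obtain the antipode from connectedness. The one genuine difference is in the choice of weights. The paper assigns $(a,b,c,z,w,d)\mapsto(1,1,1,2,2,3)$, which does \emph{not} give an algebra grading (since $[a,b]=c$ would need $\deg c=2$), only an algebra filtration; it then checks separately that this filtration is also a coalgebra filtration. Your weights $(1,1,2,3,3,6)$ are chosen so that both defining relations $[a,b]=c$ and $[z,w]=d$ are homogeneous, yielding an honest connected $\mathbb{N}$-grading for which $\Delta$ is a graded map; the associated filtration is then automatically a coalgebra filtration. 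This is a small but clean improvement: you get for free that $J$ is a connected \emph{graded} Hopf algebra, and indeed your grading is exactly the one the paper later imposes on the quotient $L$ in Lemma~\ref{yylem5.5}(5).
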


\begin{proof}
From $\S$\ref{yysec5.1}, $J$, as an algebra, is $H$, which has a
PBW-basis $\mathcal{B}$. Define the degree of a PBW-monomial
$p=a^{n_{1}}b^{n_{2}}c^{n_{3}}z^{n_{4}}
w^{n_{5}}d^{n_{6}}\in \mathcal{B}$ to be
$$N(p):=\sum_{i=1}^{3}n_{i}+2(n_{4}+n_{5})+3n_{6}.$$
Define $F_{0}=k$, and for each $n\geq 1$ let $F_{n}$
be the vector space spanned by all
monomials in $\mathcal{B}$ of degree at most $n$. In
particular, $z,w\in F_{2}, d\in F_{3},$ and
\begin{equation}\label{E5.2.2}\tag{E5.2.2}
\Delta(z)=1\otimes z+z\otimes 1+(a\otimes c-c\otimes a)\in
F_{0}\otimes F_{2}+ F_{1}\otimes F_{1}+F_{0}\otimes F_{2};
\end{equation}
\begin{equation}\label{E5.2.3}\tag{E5.2.3}
\Delta(w)=1\otimes w+w\otimes 1+(b\otimes c-c\otimes  b)\in
F_{0}\otimes F_{2}+ F_{1}\otimes F_{1}+F_{0}\otimes F_{2};
\end{equation}
\begin{equation}\label{E5.2.4}\tag{E5.2.4}
\Delta(d)=1\otimes d+d\otimes 1+c^{2}\otimes c+c\otimes c^{2}\in
\sum_{i=0}^{3}F_{i}\otimes F_{3-i}.
\end{equation}
$\textbf{Claim:}$ $\{F_{n}\}$ is an algebra and a coalgebra
filtration of $J$.

That $\{F_{n}\}$ is an exhaustive vector space filtration
of $J$ is immediate. That it is then an algebra filtration
is clear from the defining relations of $J$. Thus it suffices
to prove $\{F_{n}\}$ is a coalgebra filtration. To this end,
let $p \in \mathcal{B}$ be a PBW monomial with $N(p)=n$. We claim
\begin{equation}
\label{E5.2.5}\tag{E5.2.5}
\Delta(p)\in \sum_{i=0}^{n}F_{i}\otimes F_{n-i}.
\end{equation}
Indeed, noting \eqref{E5.2.2}, \eqref{E5.2.3} and \eqref{E5.2.4},
we have
$$\begin{aligned}
\; \Delta(p)&=\Delta(a)^{n_{1}}\Delta(b)^{n_{2}}\Delta(c)^{n_{3}}\Delta(z)^{n_{4}}
\Delta(w)^{n_{5}}\Delta(d)^{n_{6}}\\
& \subseteq \left(\sum_{j_{1}=0}^{{1}}F_{j_{1}}\otimes
F_{1-j_{1}}\right)^{n_{1}}\ldots \left(\sum_{j_{6}=0}^{1}F_{1}\otimes
F_{1-j_{3}}\right)^{n_{3}}\\
& \qquad \left(\sum_{j_{4}=0}^{2}F_{j_{4}}\otimes F_{2-j_{4}}\right)^{n_{4}}
\left(\sum_{j_{5}=0}^{2}F_{j_{5}}\otimes F_{2-j_{5}}\right)^{n_{5}}
\left(\sum_{j_{6}=0}^{3}F_{j_{6}}\otimes F_{3-j_{6}}\right)^{n_{6}}\\
& \subseteq \left(\sum_{j_{1}=0}^{{n_{1}}}F_{j_{1}}\otimes F_{{n_{1}}-j_{1}}
\right)\ldots \left(\sum_{j_{3}=0}^{n_{3}}F_{j_{3}}\otimes F_{n_{3}-j_{3}}
\right)\\
&
\qquad \left(\sum_{j_{4}=0}^{2n_{4}}F_{j_{4}}\otimes F_{2n_{4}-j_{4}}\right)
\left(\sum_{j_{5}=0}^{2n_{5}}F_{j_{5}}\otimes F_{2n_{5}-j_{5}}\right)
\left(\sum_{j_{6}=0}^{3n_{6}}F_{j_{6}}\otimes F_{3n_{6}-j_{6}}\right)\\
&\subseteq\sum_{i=0}^{n}F_{i}\otimes F_{n-i}.
\end{aligned}
$$
where the last two inclusions follow from the fact that $\{F_{n}\}$ is
an algebra filtration. This proves \eqref{E5.2.5}. By
\cite[Lemma 5.3.4]{Mo}, it follows that
$J_{0}\subset F_{0}=k$, and hence $J$ is a connected bialgebra. By
\cite[Lemma 5.2.10]{Mo}, $J$ is thus a connected Hopf algebra. Moreover,
since $J$ is, as an algebra, the enveloping algebra of a Lie algebra of
dimension 6, $\GKdim J = \GKdim {U}(\mathfrak{g})= 6$
by \cite[Example 6.9]{Mo}.
\end{proof}

\subsection{Definition of $L$}
\label{yysec5.3}

Let $J$ be the Hopf algebra defined in $\S$\ref{yysec5.2}.
Using the defining relations of $J$ and
the definition of its coproduct, a straightforward
calculation shows that $d-\frac{1}{3}c^{3}$ is a primitive
element central element of $J$. Let $I$ be the principal
ideal $(d-\frac{1}{3}c^{3})J$ of $J$, and define $L := J/I$.
Recall that an ideal $P$ in a ring $R$ is called
\emph{completely prime} if $R/P$ is a domain.

\begin{proposition}
\label{yypro5.3}
Retain the above notation. Then $L$ is a connected Hopf
algebra with $\GKdim L=5$, and $I$ is a completely prime
Hopf ideal of $J$.
\end{proposition}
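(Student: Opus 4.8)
The plan is to exploit that $\Omega := d - \frac{1}{3}c^3$ is simultaneously \emph{central} and \emph{primitive} in $J$. Centrality is immediate: since $\mathfrak{g} = \mathfrak{h}_1 \oplus \mathfrak{h}_2$ is a direct sum of Lie algebras, both $c \in Z(\mathfrak{h}_1)$ and $d \in Z(\mathfrak{h}_2)$ lie in $Z(\mathfrak{g})$, hence in $Z(J)$. Primitivity is the stated computation: using the coproduct formulas of Lemma \ref{yylem5.1} together with $\Delta(c^3) = (1\otimes c + c\otimes 1)^3 = 1\otimes c^3 + 3c\otimes c^2 + 3c^2\otimes c + c^3\otimes 1$ (valid as $c$ is central), the cross terms $c\otimes c^2$ and $c^2\otimes c$ cancel and one is left with $\Delta(\Omega) = \Omega\otimes 1 + 1\otimes\Omega$. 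Granting this, $I = \Omega J = J\Omega$ is a two-sided ideal, and it is a Hopf ideal: $\epsilon(\Omega)=0$; $\Delta(\Omega) = \Omega\otimes 1 + 1\otimes\Omega$ forces $\Delta(I) \subseteq I\otimes J + J\otimes I$ since $\Delta$ is multiplicative; and $S(\Omega) = -\Omega$ gives $S(I)\subseteq I$. Thus $L = J/I$ is a Hopf algebra. For connectedness I would push the filtration $\{F_n\}$ of Proposition \ref{yypro5.2} down to $L$: its image $\{\overline{F_n}\}$ is again a coalgebra filtration of $L$, and $\overline{F_0} = \pi(k) = k$ because $\Omega \in \ker\epsilon$ is not a unit, so $I \subsetneq J$. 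By \cite[Lemma 5.3.4]{Mo}, $L_0 \subseteq \overline{F_0} = k$, whence $L$ is connected.

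For the remaining assertions I would pass to the associated graded ring of $J$ with respect to the same filtration $\mathcal{F} = \{F_n\}$. Because the two defining brackets strictly lower the degree function $N$ of Proposition \ref{yypro5.2} (one has $N(c)=1 < N(a)+N(b)$ and $N(d)=3 < N(z)+N(w)$), the associated graded algebra is commutative and, by the PBW theorem, $\gr_{\mathcal{F}} J \cong k[a,b,c,z,w,d]$, a weighted polynomial ring. Now $\Omega \in F_3 \setminus F_2$, its two terms $d$ and $c^3$ both having $N$-degree $3$ and being distinct PBW monomials; hence its principal symbol is $\sigma(\Omega) = d - \frac{1}{3}c^3$, which is linear in the variable $d$ and therefore a \emph{prime} element. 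Consequently $\gr_{\mathcal{F}}(J)/(\sigma(\Omega)) \cong k[a,b,c,z,w]$, a domain of GK-dimension $5$.

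The crux, and the step I expect to require the most care, is transferring this from the associated graded level back to $L$ itself, namely showing $\gr_{\mathcal{F}} L \cong \gr_{\mathcal{F}}(J)/(\sigma(\Omega))$ for the quotient filtration on $L$. There is always a graded surjection $\gr_{\mathcal{F}}(J)/(\sigma(\Omega)) \twoheadrightarrow \gr_{\mathcal{F}} L$, and it is an isomorphism precisely because $\sigma(\Omega)$ is a non-zero-divisor in the domain $\gr_{\mathcal{F}} J$; this is the standard comparison lemma for filtered rings (see \cite{LvO1}, or the Rees-algebra argument used in the proof of Proposition \ref{yypro1.4}, applied to $\Omega$). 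Granting it, $\gr_{\mathcal{F}} L \cong k[a,b,c,z,w]$ is a domain, so $L$ is a domain by the standard fact that a filtered algebra with domain associated graded is itself a domain (cf. Lemma \ref{yylem1.3}(1)); equivalently, $I$ is completely prime. Finally, since the filtration is exhaustive with finite-dimensional pieces and $\gr_{\mathcal{F}} L$ is the polynomial ring in five variables, $\GKdim L = \GKdim \gr_{\mathcal{F}} L = 5$ by \cite[Lemma 6.5]{KL}. As a self-contained alternative that sidesteps the lifting lemma, one may instead identify $L$ directly with the iterated Ore extension $U(\mathfrak{h}_1)[z][w;\delta]$, where $\delta$ is the derivation sending $z \mapsto -\frac{1}{3}c^3$ and killing $a,b,c$; here the PBW basis of the extension matches the spanning set $\{a^i b^j c^k z^l w^m\}$ of $L$ obtained by using $\Omega$ to eliminate $d$, yielding the domain and GK-dimension statements simultaneously.
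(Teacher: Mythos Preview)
Your argument is correct, but it takes a different route from the paper's for the domain and GK-dimension assertions. The paper proves connectedness the same way you do (quotient of a connected Hopf algebra), then invokes the general structural theorem \cite[Corollary 6.11]{Zh} that any connected Hopf algebra is a domain, rather than computing $\gr_{\mathcal F} L$. For $\GKdim L = 5$ the paper uses a sandwich: the chain of surjective maps of domains $J \twoheadrightarrow L \twoheadrightarrow L' := J/(c,d)$ gives $6 = \GKdim J > \GKdim L > \GKdim L' = 4$, both inequalities strict by \cite[Proposition 3.15]{KL}, and then integrality of $\GKdim L$ (from \cite[Theorem 6.9]{Zh}) forces $\GKdim L = 5$. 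Your associated-graded computation is more self-contained and has the pleasant feature of exhibiting a PBW basis for $L$ directly; the paper's argument is shorter given the cited machinery, and avoids the lifting lemma you flag as the delicate step. Your closing Ore-extension description of $L$ is exactly what the paper establishes in the very next result, Lemma~\ref{yylem5.5}(1).

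One small point: \cite[Lemma 6.5]{KL} only gives $\GKdim \gr_{\mathcal F} L \leq \GKdim L$. The reverse inequality needs the additional observation you allude to, that the filtration layers $\overline{F_n}$ are finite dimensional with $\dim \overline{F_n}$ growing polynomially of degree $5$, together with the fact that $L$ is generated by $\overline{F_2}$ so that $V^n \subseteq \overline{F_{2n}}$ for $V = \overline{F_2}$. It would be cleaner to cite \cite[Proposition 6.6]{KL}, or simply to rely on your Ore-extension alternative, which settles both the domain property and $\GKdim L = 5$ in one stroke.
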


\begin{proof}
From the above comments and since $\epsilon(I)=0$, $I$ is a Hopf
ideal of $J$ and hence $L$ is a
Hopf algebra, and is connected by \cite[Corollary 5.3.5]{Mo}
because it is a factor of $J$, which is connected by
Proposition \ref{yypro5.2}. By \cite[Corollary 6.11]{Zh},
$L$, being connected, is a domain. Hence $I$ is a completely prime
ideal.

It remains to show that $\GK L=5$. It is easy to check
that $I'=\langle d,c \rangle$
is a ideal of $J$ and that the factor ring $L':= J/(d,c)$ is isomorphic
as an algebra to the enveloping algebra of the Lie
algebra $\mathfrak{g}/(kd+kc)$. By \cite[Example 6.9]{KL},
$\GK L'=6-2=4$.
We have natural algebra surjections of domains $J\to L\to L'$, so that
\begin{equation}
\label{E5.3.1}\tag{E5.3.1}
 6 = \GK J\geq \GK L \geq \GK L' = 4.
\end{equation}
Moreover, since $L$ is a proper factor of the domain $J$,
the first inequality above is strict. Similarly, $L'$ is a
proper factor of $L$, since a short exercise with the
PBW monomials $\mathcal{B}$ shows that
$c \notin \langle d-\frac{1}{3}c^{3} \rangle$. Thus the second
inequality in \eqref{E5.3.1} is also strict. As
$\GK L \in \mathbb{Z}$ by \cite[Theorem 6.9]{Zh},
it follows that $\GK L = 5.$
\end{proof}

Of course, the fact that $I$ is completely prime can easily be
proved by a direct ring theoretic argument, or, alternatively,
one could use the fact that $\mathfrak{h}_1 \oplus \mathfrak{h}_2$
is a nilpotent Lie algebra, and apply \cite[Theorem 14.2.11]{MR}.

Since $L' = L/cL$ and $L'$ is an enveloping algebra of a
Lie algebra with basis the images of $a,b,z,w$, it is
easy to deduce that $L$ also has a PBW-basis, namely the
ordered monomials in $\{a,b,c, z,w\}.$ Here and below, we
are abusing notation by omitting ``bars" above these elements

For the reader's convenience, we give an explicit presentation
for the Hopf algebra $L$ in the remark below. To calculate the
effect of the antipode on its algebra generators,
we use the fact that $(S_{L}\otimes\operatorname{id})\Delta=\epsilon$.

\begin{remark}
\label{yyrem5.4}
Retain the above notation. Then $L$ is the connected Hopf
algebra with algebra generators $a, b, c, z, w$ subject to
the relations
\[
[a, c] = [a,z] = [a,w] = [b, c] = [b, z] = [b, w] = [c, z] =[c, w] = 0.
\]
and
\[
[a, b] =c, \quad [z,w] =\frac{1}{3}c^{3}.
\]
with coproduct $\Delta:L\rightarrow L\otimes L$, counit
$\epsilon:L\rightarrow k$ and antipode $S:L\rightarrow L$
defined on generators as follows:
\[
\Delta(a)=1\otimes a+a\otimes 1, \quad \Delta(b)=1\otimes b+b\otimes 1,
\quad \Delta(c)=1\otimes c+c\otimes 1,
\]
\[
\Delta(z)=1\otimes z+a\otimes c-c\otimes a+z\otimes 1,
\]
\[
\Delta(w)=1\otimes w+b\otimes c-c\otimes b+w\otimes 1,
\]
and
\[
S(x) = -x; \quad \epsilon(x) = 0
\]
for $x\in\{a, b, c, z, w\}$.
\end{remark}

\subsection{Properties of $L$}
\label{yysec5.4}
For the definition and basic properties of the
\emph{signature} of a connected Hopf
algebra of finite Gel'fand-Kirillov dimension,
see \cite[Definition 5.3]{BG2}; the signature
records the degrees of the homogeneous generators
of the commutative polynomial algebra $\gr_c H$, the
associated graded algebra of $H$ with respect to
its coradical filtration. An \emph{iterated Hopf
Ore extension} (\emph{IHOE}) is a Hopf algebra constructed
as a finite ascending sequence of Hopf subalgebras,
each an Ore extension of the preceding one - see \cite[Definition 3.1]{BO}.

\begin{lemma}
\label{yylem5.5}
Keep the above notation.
\begin{enumerate}
\item[(1)]
$L$ is an IHOE.
\item[(2)]
$\GKdim L = 5$.
\item[(3)]
The signature $\sigma(L)$ of $L$ is $(1,1,1,2,2)$.
\item[(4)]
The centre $Z(L)$ of $L$ is $k[c].$
\item[(5)]
$L$ is connected graded as an algebra and is a graded Hopf
algebra with respect to the same grading.
\end{enumerate}
\end{lemma}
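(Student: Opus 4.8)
The plan is to work throughout from the explicit presentation of $L$ recorded in Remark \ref{yyrem5.4}, with generators $a,b,c,z,w$, the relations $[a,b]=c$, $[z,w]=\tfrac13 c^3$ and all other generator pairs commuting, together with the stated coproduct. Part (2) requires nothing new: it is exactly Proposition \ref{yypro5.3}. For part (1) I would order the generators as $c,a,b,z,w$ and build the chain
$$ k \;\subset\; k[c] \;\subset\; k[c][a] \;\subset\; k[c,a][b] \;\subset\; k[c,a,b][z] \;\subset\; k[c,a,b,z][w] = L,$$
checking at each stage that the subalgebra is a Hopf subalgebra (each adjoined generator has coproduct landing in the tensor square of the elements already present, e.g. $\Delta(z)=z\otimes 1+1\otimes z+(a\otimes c-c\otimes a)$ needs only $a,c$) and that the extension is an Ore extension of Hopf type in the sense of \cite[Definition 3.1]{BO}. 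The only non-trivial derivations are $\delta_b(a)=-c$ at the third step and $\delta_w(z)=-\tfrac13 c^3$ at the last; the first three steps give $U(\mathfrak{h}_1)$, a standard IHOE, and $z,w$ are skew-primitive of the required form, so verifying the Hopf--Ore conditions is routine bookkeeping.

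For part (5) the key is to find a grading making both the relations and the coproduct homogeneous. Writing $\deg a=\alpha$, $\deg b=\beta$, the relation $[a,b]=c$ and the coproducts of $z,w$ force $\deg c=\alpha+\beta$, $\deg z=2\alpha+\beta$ and $\deg w=\alpha+2\beta$, and one checks these automatically make $[z,w]=\tfrac13 c^3$ homogeneous. Taking $\alpha=\beta=1$ yields $\deg(a,b,c,z,w)=(1,1,2,3,3)$; I would then verify directly that every defining relation and every coproduct formula of Remark \ref{yyrem5.4} is homogeneous and that $S,\epsilon$ are graded. Since every generator has positive degree and $L$ has a PBW basis, it is connected and locally finite, giving (5). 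I would stress that this algebra grading is genuinely different from the coradical degrees appearing in (3).

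For part (3) I would show that the coradical filtration coincides with the filtration $\{F_n\}$ built from the weight $N(a)=N(b)=N(c)=1$, $N(z)=N(w)=2$ of Proposition \ref{yypro5.2}. One inclusion is free: $\{F_n\}$ is a coalgebra filtration with $F_0=k=H_0$, so $H_n\subseteq F_n$ by \cite[Lemma 5.3.4]{Mo}. For the reverse, $a,b,c\in H_1$ and $z,w\in H_2$ (since $\delta(z),\delta(w)\in H_1\otimes H_1$), and as the coradical filtration is an algebra filtration, a PBW monomial of $N$-weight $n$ lies in $H_n$, so $F_n\subseteq H_n$. Hence $\gr_c L=\gr_F L$, and in $\gr_F L$ each commutator drops below the top weight (e.g. $\bar a\bar b-\bar b\bar a$ comes from $c\in F_1$, so vanishes in weight $2$); thus $\gr_c L$ is the commutative polynomial ring on $\bar a,\bar b,\bar c,\bar z,\bar w$ with weights $(1,1,1,2,2)$, which is the signature.

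Part (4) is a direct PBW computation: $c$ is central, so $k[c]\subseteq Z(L)$, and on the PBW basis $a^ib^jc^kz^lw^m$ the inner derivations act cleanly (the relevant commutators being central) as $\operatorname{ad}(a)=c\,\partial_b$, $\operatorname{ad}(b)=-c\,\partial_a$, $\operatorname{ad}(z)=\tfrac13 c^3\,\partial_w$ and $\operatorname{ad}(w)=-\tfrac13 c^3\,\partial_z$; a central element must therefore have $i=j=l=m=0$, forcing it into $k[c]$. The main obstacle is the argument for (3): one must genuinely prove both inclusions $H_n=F_n$ rather than assume the coradical filtration is the obvious weight filtration, since it is precisely this identification — together with the collapse of the commutators in the associated graded — that pins down the signature and keeps it cleanly distinct from the algebra grading of (5).
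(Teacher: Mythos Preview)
Your proposal is correct and, for parts (1), (2), (4), (5), proceeds exactly as the paper does (same generator order $c,a,b,z,w$ for the IHOE, same derivation formulas for the centre, same grading $(1,1,2,3,3)$ for the graded Hopf structure; for (2) you invoke Proposition~\ref{yypro5.3}, while the paper instead deduces it from (1) and \cite[Theorem 2.6]{BO}, but both are fine).

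For part (3) you take a genuinely different route. The paper argues indirectly: $a,b,c$ primitive gives three $1$'s in the signature, the non-primitive coproducts of $z,w$ make their images linearly independent in $P_2(L)/P(L)$, and then \cite[Lemma 2.6(e)]{Wa1} transfers this to $\gr_c L$; since $\GKdim L=5$ the signature must be exactly $(1,1,1,2,2)$. You instead prove the stronger statement that the coradical filtration \emph{equals} the explicit weight filtration $\{F_n\}$ with weights $(1,1,1,2,2)$, by combining $H_n\subseteq F_n$ from \cite[Lemma 5.3.4]{Mo} with $F_n\subseteq H_n$ from the multiplicativity of the coradical filtration on a connected Hopf algebra, and then read the signature off $\gr_F L$ directly. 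Your argument is more self-contained (no need for the machinery of \cite{Wa1}) and gives a sharper conclusion; the paper's argument is shorter once one is willing to import that lemma. One small point to tighten: when you conclude that $\gr_F L$ is the full polynomial ring, you should remark that the PBW basis of $L$ forces the Hilbert series of $\gr_F L$ to match that of $k[\bar a,\bar b,\bar c,\bar z,\bar w]$ with weights $(1,1,1,2,2)$, so the surjection from the polynomial ring is an isomorphism.
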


\begin{proof} (1) Adjoin the generators of $L$ in the order
$c,a,b,z,w$ to produce an iterated Ore extension
$$L(0)=k, \quad L(1)=L(0)[c], \quad L(2)=L(1)[a],$$
$$ L(3)=L(2)[b; -c \frac{\partial\;\;}{\partial a}],
\quad
L(4)=L(3)[z], \quad
L(5)=L(4)[w; -\frac{1}{3} c^3 \frac{\partial\;\;}{\partial z}].$$
It is clear from the definition of the coproduct in
$\S$\ref{yysec5.2} that each step in this construction
gives a Hopf subalgebra of $L$; that is, it yields an
iterated Hopf Ore extension in the sense of \cite[Definition 3.1]{BO}.

(2) This follows from part (1) and \cite[Theorem 2.6]{BO}.

(3) Since $a,b,c$ are primitive, the signature $\sigma(L)$ contains
$(1,1,1)$. By the definitions of $\Delta (z)$ and $\Delta (w)$,
the images of the elements $z$ and $w$ are
linearly independent in $P_2(L)/(P(L))$ (see the definitions given
in \cite[Definition 2.4]{Wa1}). Thus, by \cite[Lemma 2.6(e)]{Wa1},
the images of $z$ and $w$ are linearly independent in $P_2(\gr L)/(P(\gr L))$.
Hence the signature $\sigma(L)$ contains $(2,2)$. Since $\GKdim L=5$,
the assertion follows.

(4) Clearly, $k[c] \subseteq Z(L)$. For the reverse inclusion, from
part (1), we can write $L$ as a free (left, say) $k[c]$-module,
with basis the ordered monomials in $a,b,z,w$. By definition,
$$
[a,-]=c\frac{\partial\;\;}{\partial b},\quad
[b,-]=-c\frac{\partial\;\;}{\partial a}, \quad
[z,-]=\frac{1}{3} c^3 \frac{\partial\;\;}{\partial w},\quad
[w,-]=-\frac{1}{3} c^3 \frac{\partial\;\;}{\partial z}.
$$
Each of the inner derivations listed above reduces the total degree of
$a,b,z,w$ by $1$. Suppose that $\alpha \in Z(L) \setminus k[c]$. Obtain a
contradiction by writing $\alpha$ in terms of the above $k[c]$-basis and
applying an appropriate inner derivation from the collection
$[a,-],[b,-],[z,-],[w,-]$ to $\alpha$.

(5) Assign degrees to the generators as follows:
$$ \deg\,a = \deg\,b
= 1; \quad
\deg\,c = 2; \quad
\deg\, z = \deg w = 3.
$$
It is easy to check from the relations that $L$ is then
connected graded as an algebra. Checking the definition of $\Delta$
on the generators, one finds that $L$ is a graded Hopf algebra
with these assigned degrees.
\end{proof}

We can now prove the main result of this section. For
convenience of reference, we restate in it parts of
Lemma \ref{yylem5.5}.

\begin{theorem}
\label{yythm5.6}
Let $L$ be the algebra defined in $\S${\rm{\ref{yysec5.3}}}.
\begin{enumerate}
\item[(1)]
$L$ is a connected Hopf algebra.
\item[(2)]
$L$ is connected graded as an algebra.
\item[(3)]
As an algebra, $L$ is not isomorphic to $U(\mathfrak{n})$ for
any finite dimensional Lie algebra $\mathfrak{n}$.
\item[(4)]
$L$ is an IHOE.
\item[(5)]
$\mathrm{GKdim}L = 5$.
\item[(6)]
The signature $\sigma(L)$ of $L$ is $(1,1,1,2,2)$.
\end{enumerate}
\end{theorem}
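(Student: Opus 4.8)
Parts (1), (2), (4), (5) and (6) are already in hand: (1) and (5) are Proposition \ref{yypro5.3}, while (2), (4) and (6) are parts (5), (1) and (3) of Lemma \ref{yylem5.5}. So the only genuinely new assertion is the non-isomorphism statement (3), and the entire content of the proof is really a proof of (3). The plan is to restate the five known items with the appropriate references and then argue (3) by contradiction, supposing there is an \emph{algebra} isomorphism $L\cong U(\mathfrak n)$ for some finite-dimensional Lie algebra $\mathfrak n$.

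First I would pin down $\mathfrak n$ as much as possible. Since $L$ is a domain with $\GKdim L=5$, necessarily $\dim_k\mathfrak n=5$. As $L$ is connected graded as an algebra (Lemma \ref{yylem5.5}(5)) we have $\bigcap_i\fm^i=0$ for $\fm=\ker\epsilon$; transporting this through any character of $U(\mathfrak n)$ (using Lemma \ref{yylem2.1}(1)) forces $\mathfrak n$ to be nilpotent. Next I would compare abelianizations: the ideal $\langle[L,L]\rangle$ equals $cL$, because $c=[a,b]$ and everything commutes modulo $c$, so $L_{\mathrm{ab}}\cong k[a,b,z,w]$ has $\GKdim 4$; since $U(\mathfrak n)_{\mathrm{ab}}=S(\mathfrak n/[\mathfrak n,\mathfrak n])$, this yields $\dim_k[\mathfrak n,\mathfrak n]=1$. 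A nilpotent Lie algebra of dimension $5$ with one-dimensional derived subalgebra is, by the standard classification via the alternating form induced on $\mathfrak n/[\mathfrak n,\mathfrak n]$, isomorphic either to the five-dimensional Heisenberg algebra $\mathfrak h_5$ or to $\mathfrak h_3\oplus k^2$, where $\mathfrak h_3$ is the three-dimensional Heisenberg algebra and $k^2$ is abelian.

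The case $\mathfrak n\cong\mathfrak h_3\oplus k^2$ is eliminated by the centre, which is an algebra invariant: $Z(U(\mathfrak h_3\oplus k^2))\cong k[e]\otimes k[x,y]$ has $\GKdim 3$, whereas $Z(L)=k[c]$ has $\GKdim 1$ by Lemma \ref{yylem5.5}(4). This leaves $\mathfrak n\cong\mathfrak h_5$, whose enveloping algebra also has one-dimensional centre, so the centre alone does not separate $L$ from $U(\mathfrak h_5)$.

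The decisive step, and the point I expect to be the main obstacle, is to separate $L$ from $U(\mathfrak h_5)$ by passing to the associated graded ring for the $\fm$-adic filtration. By \cite[Proposition 3.4(a)]{GZ}, $\gr_{\fm}L\cong U(\mathfrak g)$ for a graded Lie algebra $\mathfrak g$, and I would compute $\mathfrak g$ directly. The generators $a,b,z,w$ span $\fm/\fm^2$ (note $c=[a,b]\in\fm^2$), while a degree count in the Adams grading shows $c\notin\fm^3$, so $\bar c$ spans the degree-$2$ part. Crucially, $[z,w]=\tfrac13 c^3$ lies in $\fm^6$, so its symbol vanishes in $\fm^2/\fm^3$, giving $[\bar z,\bar w]=0$ in $\mathfrak g$ while $[\bar a,\bar b]=\bar c\neq 0$; hence $\mathfrak g\cong\mathfrak h_3\oplus k^2$ and $\gr_{\fm}L\cong U(\mathfrak h_3\oplus k^2)$, whose centre has $\GKdim 3$. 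Running the same computation for $U(\mathfrak h_5)$, where both commutators $[p_i,q_i]=e$ retain their symbols, gives $\gr_{\fm}U(\mathfrak h_5)\cong U(\mathfrak h_5)$, with centre of $\GKdim 1$. Finally, an algebra isomorphism $L\cong U(\mathfrak h_5)$ may be adjusted by a winding automorphism (Lemma \ref{yylem2.1}(1)) so as to carry $\ker\epsilon$ to $\ker\epsilon$, whence it induces $\gr_{\fm}L\cong\gr_{\fm}U(\mathfrak h_5)$ and the contradiction $3=1$. The heart of the matter is exactly this $\fm$-adic computation: the right-hand side of the relation $[z,w]=\tfrac13 c^3$ sits in too high an $\fm$-adic order to survive in the associated graded, so $z$ and $w$ become central there and inflate the centre — precisely the behaviour an enveloping algebra of a Heisenberg algebra cannot exhibit.
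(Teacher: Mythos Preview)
Your overall strategy for (3) is sound and in fact yields a cleaner invariant than the paper's, but there is one genuine slip. The claim that $\bigcap_i\mathfrak m^i=0$ for the augmentation ideal of $U(\mathfrak n)$ forces $\mathfrak n$ to be nilpotent is false: this intersection vanishes for the enveloping algebra of \emph{every} finite-dimensional Lie algebra, since by PBW one has $\gr_{\mathfrak m}U(\mathfrak n)\cong S(\mathfrak n)$ and the $\mathfrak m$-adic filtration is separated. The repair is immediate and is exactly what the paper does: since $\langle[L,L]\rangle=cL$ is principal with central generator $c$, under any algebra isomorphism $\theta:L\to U(\mathfrak n)$ one gets $\theta(c)=\lambda x$ with $[\mathfrak n,\mathfrak n]=kx$ and $\lambda\in k^\times$ (units are scalar in a connected graded domain); hence $x$ is central in $U(\mathfrak n)$ and so $x\in Z(\mathfrak n)$, forcing $\mathfrak n$ nilpotent of class at most~$2$. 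With this fix your classification into $\mathfrak h_5$ or $\mathfrak h_3\oplus k^2$, the elimination of $\mathfrak h_3\oplus k^2$ via $Z(L)=k[c]$, and the rest of the argument go through.

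Your decisive step is genuinely different from the paper's. The paper truncates at level~$3$, comparing the finite-dimensional algebras $B=L^+/(L^+)^3$ and $S=\mathfrak m_U/\mathfrak m_U^3$ and showing by an explicit PBW computation (Proposition~\ref{yypro5.7}) that $\dim_kZ(B)=13\ne 11=\dim_kZ(S)$. You instead pass to the full $\gr_{\mathfrak m}$ and observe that $[z,w]=\tfrac13c^3\in\mathfrak m^6$ vanishes in $\mathfrak m^2/\mathfrak m^3$, so $\gr_{\mathfrak m}L\cong U(\mathfrak h_3\oplus k^2)$ with centre of $\GKdim\,3$, whereas $\gr_{\mathfrak m}U(\mathfrak h_5)\cong U(\mathfrak h_5)$ with centre of $\GKdim\,1$. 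Both arguments exploit the same phenomenon --- the cubic right-hand side of $[z,w]$ sits too deep in the $\mathfrak m$-adic filtration --- but yours trades the basis-and-dimension bookkeeping of Proposition~\ref{yypro5.7} for an appeal to \cite[Proposition~3.4(a)]{GZ}. Your computation also recovers, as a by-product, the Lie algebra recorded in Remark~\ref{yyrem5.13}.
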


\begin{proof} Everything except (3) is proved in Lemma \ref{yylem5.5}.
If $L\cong U(\mathfrak{n})$ for a Lie algebra $\mathfrak{n}$, then
$$\dim_k \mathfrak{n}=\GKdim U(\mathfrak{n})=\GKdim L=5.$$
So, let $\mathfrak{n}$ be a five-dimensional Lie algebra, and
write $U := U(\mathfrak{n}).$ Suppose that, as algebras,
\begin{equation}
\label{E5.6.1}\tag{E5.6.1}
L \cong U
\end{equation}
via the map $\theta: L \rightarrow U$. Easy calculations show that
the commutator ideal $\langle [L,L]\rangle$ of $L$ is the principal ideal
$cL$, and hence that $L/\langle [L,L] \rangle$ is a commutative polynomial
algebra in 4 variables. By \eqref{E5.6.1},
\begin{equation}
\label{E5.6.2}\tag{E5.6.2}
\mathrm{dim}_k(\mathfrak{n}/[\mathfrak{n},\mathfrak{n}]) = 4.
\end{equation}
So $\mathrm{dim}_k([\mathfrak{n},\mathfrak{n}]) = 1$. Write
$[\mathfrak{n}, \mathfrak{n}]=kx$ for some $x\in \mathfrak{n}$. Then the
isomorphism \eqref{E5.6.1} takes $cL$ to $xU$. Since $L$ and $U$ are
domains with only scalars as units, we may assume without loss that
$\theta (c) = x$. In particular, $[\mathfrak{n}, \mathfrak{n}]$ is a
1-dimensional space contained within $Z(\mathfrak{n})$. More precisely,
from Lemma \ref{yylem5.5}(4), $Z(\mathfrak{n}) = kx.$  That is,
\begin{equation}
\label{E5.6.3}\tag{E5.6.3}
\mathfrak{n} \textit{ is nilpotent of class 2, with } Z(\mathfrak{n}) = kx.
\end{equation}

By \cite{Go1} (or use linear algebra), there is up to
isomorphism precisely one Lie algebra of dimension 5 satisfying
\eqref{E5.6.3}. Namely, $\mathfrak{n}$ has a basis
$\{x, x_{1},x_{2}, x_{3}, x_{4} \}$ such that
$$[x_{1}, x_{2}]\quad =\quad x \quad = \quad [x_{3}, x_{4}],$$
and all other brackets are $0$.

Let $L^+$ denote the augmentation ideal of $L$ with respect to its given
Hopf algebra structure. Since the winding automorphisms of $U$ (and
similarly those of $L$) transitively permute the character ideals of $U$
(and, respectively, of $L$), we can follow the map $\theta$ of
\eqref{E5.6.1} by an appropriate winding automorphism of $U$, and thus
assume without loss of generality that
\begin{equation}
\label{E5.6.4}\tag{E5.6.4}
\theta (L^+) = \mathfrak{n}U =
\langle x, x_1, x_2, x_3, x_4 \rangle =: I.
\end{equation}
Define now
\begin{equation}
\label{E5.6.5}\tag{E5.6.5}
B:=L^{+}/(L^{+})^{3} \textit{ and } S := I/I^3.
\end{equation}
By \eqref{E5.6.1} and \eqref{E5.6.4}, $\theta$ induces an isomorphism
(of algebras without identity) of $B$ with $S$. We shall show however that
\begin{equation}
\label{E5.6.6}\tag{E5.6.6}
\mathrm{dim}_k(Z(B)) = 13, \textit{ and } \mathrm{dim}_k(Z(S)) = 11.
\end{equation}
This is manifestly a contradiction, so \eqref{E5.6.1} must be false, and
the proof is complete. It remains therefore to prove \eqref{E5.6.6}.
This is done in Proposition \ref{yypro5.7} below.
\end{proof}

\begin{proposition}
\label{yypro5.7}
Retain the notation introduced in Theorem {\rm{\ref{yythm5.6}}}.
\begin{enumerate}
\item[(1)]
$\mathrm{dim}_k(B) = \mathrm{dim}_k(S) = 15.$
\item[(2)]
$\mathrm{dim}_k(Z(B))= 13.$
\item[(3)]
$\mathrm{dim}_k(Z(S))= 11.$
\end{enumerate}
\end{proposition}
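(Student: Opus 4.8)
The plan is to realise both $B$ and $S$ as graded by the augmentation-adic filtration and to read off everything from the associated graded algebras. Write $L^+=\ker\epsilon$ and $I=\mathfrak n U$ for the two augmentation ideals, and filter $L$ (resp. $U$) by the powers $\{(L^+)^i\}$ (resp. $\{I^i\}$); since both are connected/nilpotently filtered we have $\bigcap_i (L^+)^i=0$ and $\bigcap_i I^i=0$. As in the proof of Proposition \ref{yypro3.1}, \cite[Proposition 3.4(a)]{GZ} then gives $\gr_{L^+}L\cong U(\mathfrak g_0)$ and $\gr_I U\cong U(\mathfrak g_0')$ for graded Lie algebras $\mathfrak g_0,\mathfrak g_0'$, each generated in filtration-degree $1$ (automatic for an adic associated graded, where $\fm/\fm^2$ generates). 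First I would identify these Lie algebras. Assigning $a,b,z,w$ (resp. $x_1,x_2,x_3,x_4$) filtration-degree $1$, each degree-$1$ part is $4$-dimensional; the only surviving bracket for $\mathfrak g_0$ is $[\bar a,\bar b]=\bar c$ in degree $2$, while $[\bar z,\bar w]=0$ because $[z,w]=\frac13 c^3$ lies in $(L^+)^6$ and so has vanishing degree-$2$ symbol. For $\mathfrak g_0'$, by contrast, both $[\bar x_1,\bar x_2]=\bar x$ and $[\bar x_3,\bar x_4]=\bar x$ persist. Either way $\mathfrak g_0,\mathfrak g_0'$ are $5$-dimensional, concentrated in degrees $1$ and $2$ with a one-dimensional degree-$2$ part ($\bar c$, resp. $\bar x$).

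For part (1), applying the PBW theorem to $U(\mathfrak g_0)$ and $U(\mathfrak g_0')$ computes the graded pieces: the degree-$1$ component has dimension $4$, and the degree-$2$ component has dimension $\dim S^2((\mathfrak g_0)_1)+\dim(\mathfrak g_0)_2=10+1=11$. Hence $\dim_k B=\dim_k (L^+)/(L^+)^2+\dim_k (L^+)^2/(L^+)^3=4+11=15$, and identically $\dim_k S=15$. Concretely this exhibits explicit bases: the images of $a,b,z,w$ together with $c$ and the ten ordered quadratic monomials in $a,b,z,w$ for $B$, and the evident analogue for $S$.

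For parts (2) and (3) the key structural point is that $B^3=0=S^3$, so the $11$-dimensional degree-$2$ parts $B^2$, $S^2$ are automatically central: any product involving a degree-$2$ factor lands in $B^3=0$. Thus $Z(B)\supseteq B^2$ and $Z(S)\supseteq S^2$, and since the commutator is graded the center is homogeneous, reducing the remaining problem to linear algebra on the degree-$1$ space: a degree-$1$ element $y_1$ is central iff it commutes with each of the four degree-$1$ generators, as $[y_1,B^2]\subseteq B^3=0$. These commutators are governed entirely by the brackets above. For $B$, the maps $[\,\cdot\,,a]$ and $[\,\cdot\,,b]$ detect the $a,b$-coefficients through $[a,b]=c$, while $z,w$ commute with everything, so the central degree-$1$ elements form the plane $\langle z,w\rangle$ and $\dim Z(B)=11+2=13$. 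For $S$, because both $[x_1,x_2]$ and $[x_3,x_4]$ equal the nonzero element $x$, the four maps $[\,\cdot\,,x_i]$ force all four coefficients to vanish, so no nonzero degree-$1$ element is central and $\dim Z(S)=11$.

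The main obstacle is the rigorous justification that the fifteen proposed monomials are genuinely linearly independent in $B$ (resp. $S$) — equivalently, that $(L^+)^3$ meets the span of the low-filtration monomials trivially and that the degree-$2$ graded piece truly has dimension $11$ rather than collapsing. This is exactly what the isomorphisms $\gr_{L^+}L\cong U(\mathfrak g_0)$ and $\gr_I U\cong U(\mathfrak g_0')$, with $\mathfrak g_0,\mathfrak g_0'$ pinned down as above, provide; so the entire computation rests on correctly determining these two Lie algebras (and in particular on the contrast $[\bar z,\bar w]=0$ versus $[\bar x_3,\bar x_4]=\bar x\neq0$). Everything after that is the routine bracket bookkeeping sketched in the previous paragraph.
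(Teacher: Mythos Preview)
Your approach is correct and a bit more conceptual than the paper's. The paper works directly with explicit bases: for (1) it exhibits fifteen monomials and verifies the key fact $c \notin (L^+)^3$ by a straightening argument using the IHOE PBW basis of $L$; for (2) and (3) it checks centrality element by element. Your route through $\gr_{L^+}L \cong U(\mathfrak g_0)$ packages the same content via PBW in the enveloping algebra, and your observation that $B^2\subseteq Z(B)$ reduces (2),(3) to bracket linear algebra on the degree-$1$ generators is exactly what the paper does, just phrased more uniformly. (A minor quibble: $B$ is only filtered, not graded, so ``the center is homogeneous'' is loose; but since $B^2\subseteq Z(B)$ the reduction to $B/B^2$ is valid regardless.)

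There is, however, one step you have not actually closed. You assert that $(\mathfrak g_0)_2=k\bar c$ is one-dimensional, but whether $\bar c\neq 0$ in $(L^+)^2/(L^+)^3$ is \emph{precisely} the statement $c\notin (L^+)^3$---the fact the paper isolates as \eqref{E5.7.3} and proves directly. Citing \cite[Proposition~3.4(a)]{GZ} only gives $\gr_{L^+}L\cong U(\mathfrak g_0)$ for \emph{some} graded Lie algebra $\mathfrak g_0$; your bracket computation then shows $\mathfrak g_0$ is a \emph{quotient} of (Heisenberg)$\,\oplus\,k^2$, not that the quotient map is an isomorphism. You need one further input to force $\dim\mathfrak g_0=5$: either invoke Proposition~\ref{yypro1.4}(4) (applicable since $L$ is connected graded, Lemma~\ref{yylem5.5}(5)) to get $\dim\mathfrak g_0=\GKdim\gr_{L^+}L=\GKdim L=5$; or observe that $\bar c=0$ would make $\mathfrak g_0$ abelian and hence $\gr_{L^+}L$ commutative, whence $L$ itself is commutative by \cite[Lemma~3.5]{GZ}, a contradiction. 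The analogous point $\bar x\neq 0$ for $\mathfrak g_0'$ is easier, since for nilpotent $\mathfrak n$ one has $\gr_I U(\mathfrak n)\cong U(\mathfrak n)$ classically. With this filled in, your argument is complete and your parts (2),(3) then go through exactly as written, since they too rely on $\bar c\neq 0$ (resp.\ $\bar x\neq 0$) to conclude that $a,b$ (resp.\ $x_1,\dots,x_4$) fail to be central.
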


\begin{proof} (1) To prove that
\begin{equation}\label{E5.7.1}\tag{E5.7.1}
\mathrm{dim}_k(B)= 15,
\end{equation}
we show that
\begin{equation}
\label{E5.7.2}\tag{E5.7.2}
B \textit{ has basis (the images of) } \mathcal{B}_B,
\end{equation}
where
\[
\mathcal{B}_B := \{a,b,c,z,w, ab,az,aw,a^{2},b^{2},bz,bw,z^{2},zw,w^{2}\}.
\]
Since (abusing notation regarding images), $L/cL \cong k[a,b,z,w] =: T$,
it follows that
$$\begin{aligned}
T^{+}/(T^{+})^3&=L^+/((L^+)^3 + cL) \\
&= L^+/((L^+)^3 + ck) \\
&= B/\bar{cL},
\end{aligned}
$$
where $\bar{cL}$ is the image of $cL$ in $B$, and
where the second equality holds since $c$ multiplied by any element of
$L^+$ is in $(L^+)^3$. Now $T^+/(T^+)^3$ is clearly a vector space of
dimension 14 with basis the image of $\mathcal{B}_B \setminus \{c\}$.

Moreover,
$$ \bar{cL} = ((L^+)^3 + cL)/(L^+)^3
= ((L^+)^3 + ck)/(L^+)^3,
$$
so that $\mathrm{dim}_k(cB) \leq 1.$ Thus, to prove \eqref{E5.7.1} it
remains to show that
\footnote{It seems this may be more or less contained in
\cite[Corollary 3.3]{RU}, but in any case we sketch here a direct proof.}
\begin{equation}
\label{E5.7.3}\tag{E5.7.3}
c \notin (L^+)^3.
\end{equation}
By definition, $(L^+)^3$ is the $k$-span of the words of length at least
3 in $\{a,b,c,z,w\}$. By a routine straightening argument using the
defining relations for $L$, using for definiteness the ordering
$\{a,b,c,z,w\}$ of the PBW-generators, we calculate that
\begin{equation}
\label{E5.7.4}\tag{E5.7.4}
(L^+)^3 = \mathcal{S} + \mathcal{C},
\end{equation}
where
$$ \mathcal{S} := k-\textit{span of ordered words in } a,b,z,w
\textit{ of length at least }3, $$
and
$$  \mathcal{C} := c \,\times \{k-\textit{span of non-empty
ordered words in } a,b,c,z,w\}. $$
By the linear independence part of the PBW-theorem for $L$, which holds
because $L$ is an IHOE, Lemma \ref{yylem5.5}(1), \eqref{E5.7.3} now
follows from \eqref{E5.7.4}.

The argument for $S$ is similar to the one for $B$. In particular,
one shows that $S$ has basis
$$ \mathcal{B}_S := \{x,x_1,x_2,x_3,x_4\} \cup
\{\textit{ordered words of length 2 in } x_i : 1 \leq i \leq 4 \}.$$

(2) By inspection,
$$ k-\textit{span}\{\mathcal{B}_B \setminus \{a,b\}\}\subset{Z(B)}.$$
Let $f = \lambda a + \mu b + d\in B,$ where
$d \in k-\textit{span}\{\mathcal{B} \setminus \{a,b,\}\}.$ Then, if
$f\in Z(B)$, $0 = [a,f] = \mu c.$ Hence, $\mu = 0.$ Similarly,
$\lambda = 0$, and (2) is proved.

(3) By an argument very similar to (2), one shows that $Z(S)$ has
$k$-basis
$$\mathcal{B}_S \setminus\{x_1,x_2,x_3,x_4\}.$$
This proves (3).
\end{proof}

\begin{remarks}
\label{yyrem5.8}
Let $L$ be the connected Hopf algebra in Theorem \ref{yythm5.6}.
\begin{enumerate}
\item[(1)]
$L$ provides a negative answer to Question L of \cite{BG1}.
\item[(2)]
$L$ is an example of a connected Hopf algebra with
signature $(1,1,1,2,2)$ which is  not isomorphic as a Hopf algebra
to the enveloping algebra of any coassociative Lie algebra.
By contrast, every connected Hopf algebra with signature
$(1,1, \ldots ,1,2)$ \emph{is} isomorphic as an algebra
to such an enveloping algebra. See \cite{Wa3} for the
relevant definition and theorem.
\end{enumerate}
\end{remarks}

\subsection{Proof of Corollary \ref{yycor0.6}}
\label{yysec5.5}
In this subsection we will discuss the dual of $L$ and prove
Corollary \ref{yycor0.6}. We begin with the proof of part (2)
of the corollary, which is immediate from the following lemma,
a variant of a result of Duflo \cite[Theorem 7.2]{KL}.

\begin{lemma}
\label{yylem5.9}
Let $R$ be an augmented $k$-algebra with augmentation
$\epsilon : R \longrightarrow k$ and let $n$ be a
positive integer. Suppose that $R$ is a filtered algebra,
$R = \cup_{i \geq 0} R_i$, with $R_0 = k$ and $\gr R$ a
commutative polynomial algebra in $n$ variables,
generated in degree 1. Then $R \cong U(\mathfrak{g})$,
where $\mathfrak{g}$ is the $n$-dimensional vector space
$R_1/R_0$, which is a Lie algebra with respect to the
bracket induced by the commutator in $R$.
\end{lemma}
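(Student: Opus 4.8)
The plan is to exhibit $\mathfrak g$ as an honest Lie subalgebra of $R$ under the commutator bracket, to produce a comparison homomorphism $U(\mathfrak g)\to R$ from the universal property of the enveloping algebra, and finally to check it is bijective by passing to associated graded rings, where everything reduces to polynomial algebras.

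First I would pin down the Lie algebra structure. Because the filtration is multiplicative and $\gr R$ is commutative, for $v,v'\in R_1$ the degree-$2$ leading terms of $vv'$ and $v'v$ coincide in $(\gr R)_2=R_2/R_1$, so the commutator $[v,v']=vv'-v'v$ already lies in $R_1$. Since $R_0=k\cdot 1$ is central, this bracket descends to $R_1/R_0$ and makes $\mathfrak g:=R_1/R_0$ a Lie algebra of dimension $n=\dim_k(\gr R)_1$, as asserted in the statement.

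The key step, and the one where the augmentation hypothesis does all the work, is to choose the lift of $\mathfrak g$ into $R$ correctly. I would take $V:=R_1\cap\ker\epsilon$. Since $\epsilon(1)=1$ we have $R_1=k\cdot 1\oplus V$ with $\dim_k V=n$, so the projection $V\to R_1/R_0=\mathfrak g$ is a linear isomorphism. The point is that $V$ is closed under the commutator: for $v,v'\in V$ we already know $[v,v']\in R_1$, and applying the algebra map $\epsilon$ gives $\epsilon([v,v'])=\epsilon(v)\epsilon(v')-\epsilon(v')\epsilon(v)=0$, whence $[v,v']\in R_1\cap\ker\epsilon=V$. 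Thus $V\hookrightarrow R$ is a Lie algebra embedding identifying $\mathfrak g$ with a Lie subalgebra of $(R,[\cdot,\cdot])$. This is the main obstacle: without the augmentation the bracket of two lifts could acquire a nonzero scalar component, so that $\mathfrak g$ would only be a central extension of a Lie subalgebra rather than a subalgebra. Exactly this occurs for the Weyl algebra, whose standard filtration has commutative polynomial associated graded generated in degree $1$ yet which admits no augmentation and is not an enveloping algebra; the existence of $\epsilon$ is precisely what rules such behaviour out.

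With the inclusion $\mathfrak g\cong V\subseteq R$ in hand, the universal property of $U(\mathfrak g)$ supplies an algebra homomorphism $\Phi\colon U(\mathfrak g)\to R$ extending it. As $\Phi(\mathfrak g)=V\subseteq R_1$ and both filtrations are multiplicative, $\Phi$ is filtered and induces a graded map $\gr\Phi\colon \gr U(\mathfrak g)\to\gr R$. By the PBW theorem $\gr U(\mathfrak g)\cong S(\mathfrak g)$ is a polynomial ring in $n$ variables generated in degree $1$, and in degree $1$ the map $\gr\Phi$ is the isomorphism $\mathfrak g\xrightarrow{\sim}(\gr R)_1$. Since $\gr U(\mathfrak g)$ is generated in degree $1$, $\gr\Phi$ is surjective; and since $\gr U(\mathfrak g)$ and $\gr R$ are both polynomial rings in $n$ variables generated in degree $1$ they have the same Hilbert series $1/(1-t)^n$, so a graded surjection between them has trivial kernel and hence is an isomorphism. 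Finally I would invoke the standard fact that a filtered homomorphism inducing an isomorphism on associated graded rings, for exhaustive $\mathbb N$-filtrations, is itself an isomorphism: injectivity by comparison of leading terms and surjectivity by induction on filtration degree. This yields $R\cong U(\mathfrak g)$, as required.
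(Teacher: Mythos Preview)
Your proof is correct and follows the same opening strategy as the paper: both identify $V=R_1\cap\ker\epsilon$ as an $n$-dimensional Lie subalgebra of $R$ (using commutativity of $\gr R$ to get $[R_1,R_1]\subseteq R_1$, and the fact that $\ker\epsilon$ is an ideal to keep the bracket inside $V$), and both then invoke the universal property to obtain a map $\Phi\colon U(\mathfrak g)\to R$.

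The difference is only in the endgame. The paper observes directly that $\Phi$ is surjective (since $R$ is generated by $V$, as $\gr R$ is generated in degree $1$), and then kills the kernel by a Gel'fand--Kirillov dimension argument: $\GKdim R=n=\GKdim U(\mathfrak g)$, and $U(\mathfrak g)$ is an Ore domain, so \cite[Proposition 3.15]{KL} forces $I=0$. Your route instead passes to associated graded and compares Hilbert series, using PBW to identify $\gr U(\mathfrak g)\cong S(\mathfrak g)$ and then the standard ``filtered isomorphism from graded isomorphism'' lemma. Your argument is a bit more self-contained (it avoids the cited results on $\GKdim$ of domains), while the paper's is terser once those references are granted; but the two are very close in spirit and length.
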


\begin{proof}  Since $\gr R = k[x_1, \ldots , x_n]$ is
generated in degree 1, $R$ is also affine, generated
by elements of $R_1$ which are lifts of the $x_i$. In
particular, $R_1$ is finite dimensional, so we can fix
a $k$-basis $y_1, \ldots , y_n$ of
$R_1 \cap \ker \epsilon$ with
$R= k\langle y_1, \ldots , y_n\rangle$. Commutativity
of $\gr R$ and the fact that $\ker \epsilon$
is an ideal of $R$ ensure that $\mathfrak{g} :=
\sum_{i=1}^n ky_i$ is a Lie subalgebra of $R$. The
universal property of the enveloping algebra
$U(\mathfrak{g})$ implies that $R \cong U(\mathfrak{g})/I$
for some ideal $I$ of $U(\mathfrak{g})$. But
$\GKdim R = n = \GKdim U(\mathfrak{g})$ by
\cite[Proposition 6.6, Example 6.9]{KL}, so $I = \{0\}$
by \cite[Proposition 3.15]{KL}.
\end{proof}

We recall some facts about completion in the algebraic setting.
Let $A$ be a $k$-algebra with an ideal $\fm$ with 
$\mathrm{dim}_k A/\fm = 1$. In practice,
$\fm$ is canonically given. Assume that
$\bigcap_{i=0}^{\infty} \fm^i=\{0\}$, so that the
$\fm$-filtration $\{\fm^i\}_{i=0}^{\infty}$
is separated. The {\it completion} of $A$ with respect to the
$\fm$-adic topology  is defined to be
$$\widehat{A}=\lim_{{\longleftarrow}_n} A/\fm^n.$$
The following lemma is well-known.

\begin{lemma}
\label{yylem5.10}
Let $A:=\bigoplus_{i=0}^{\infty} A(i)$ be an affine
connected graded algebra with
$\fm=A_{\geq 1}:=\bigoplus_{i=1}^{\infty} A(i)$. Then
$$\widehat{A}=\lim_{\longleftarrow n} A/A_{\geq n}
=\{\prod_{i=0}^{\infty} a_i\mid a_i\in A(i)\}.$$
As a consequence, a $k$-linear basis of $A$
consisting of homogeneous elements serves as a
topological basis of $\widehat{A}$.
\end{lemma}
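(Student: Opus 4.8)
The plan is to reduce everything to the single fact that the two descending chains of ideals $\{\fm^n\}_{n\ge 1}$ and $\{A_{\ge n}\}_{n\ge 1}$ are cofinal in one another. Cofinal filtrations induce the same topology on $A$ and hence the same completion, so this immediately upgrades the defining formula $\widehat A=\lim_{\longleftarrow n}A/\fm^n$ to $\widehat A=\lim_{\longleftarrow n}A/A_{\ge n}$. Once that is in hand, the displayed identification of $\widehat A$ with the set of formal series $\{\prod_{i=0}^\infty a_i:a_i\in A(i)\}$ is a routine unravelling of an inverse limit over truncations, and the topological-basis assertion follows at once.

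First I would record the easy inclusion: since $\fm=A_{\ge 1}$ is spanned by homogeneous elements of positive degree, a product of $n$ such elements has degree $\ge n$, so $\fm^n\subseteq A_{\ge n}$ for all $n$. For the reverse cofinality I would invoke affineness. Choose finitely many homogeneous algebra generators of $A$ and let $d$ be the maximum of their (positive) degrees. Any homogeneous element of degree $m$ is a $k$-linear combination of monomials of total degree $m$ in these generators, and each such monomial is a product of at least $\lceil m/d\rceil$ generators, each lying in $\fm$; hence it lies in $\fm^{\lceil m/d\rceil}$. Therefore $A_{\ge m}\subseteq\fm^{\lceil m/d\rceil}$, and in particular $A_{\ge nd}\subseteq\fm^n$. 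Combining the two inclusions gives cofinality, so
\[
\widehat A=\lim_{\longleftarrow n}A/\fm^n=\lim_{\longleftarrow n}A/A_{\ge n}.
\]
As a byproduct $\bigcap_n\fm^n\subseteq\bigcap_n A_{\ge n}=0$, so the $\fm$-adic filtration is automatically separated, consistent with the standing assumption.

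Next I would identify the second inverse limit explicitly. Because $A=\bigoplus_{i\ge 0}A(i)$, truncation gives $A/A_{\ge n}\cong\bigoplus_{i=0}^{n-1}A(i)$, with transition maps $A/A_{\ge n+1}\to A/A_{\ge n}$ the evident projections annihilating $A(n)$. A compatible sequence in the inverse limit is thus exactly a choice of $a_i\in A(i)$ for every $i$, recorded as the formal product $\prod_{i=0}^\infty a_i$ of the statement; this yields the claimed bijection, and one checks it respects multiplication. Finally, given a homogeneous $k$-basis of $A$, grouping its members by degree writes each $a_i$ as a finite linear combination (finite because affine connected graded algebras are locally finite) of basis elements of degree $i$; hence every element of $\widehat A$ is uniquely a convergent series in these basis elements, which is precisely the statement that a homogeneous basis is a topological basis of $\widehat A$.

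The only genuine obstacle is the cofinality step, and specifically the reverse inclusion $A_{\ge m}\subseteq\fm^{\lceil m/d\rceil}$. This is exactly where affineness is indispensable: without a finite bound $d$ on the degrees of generators the two filtrations need not be cofinal and $\widehat A$ need not admit the stated power-series description. Everything after that is formal manipulation of inverse limits and poses no difficulty.
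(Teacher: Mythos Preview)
Your proof is correct and follows essentially the same approach as the paper: the paper's proof simply asserts that the first equality holds because the filtrations $\{A_{\ge n}\}$ and $\{\fm^n\}$ are cofinal, that the second equality is obvious, and that the consequence is clear. You have supplied the details behind each of these assertions (in particular the use of affineness to bound generator degrees and obtain the reverse cofinality inclusion), but the underlying argument is the same.
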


\begin{proof}
The second equality is obvious. The first follows from the
fact that $\{A_{\geq n}\}_{n=0}^{\infty}$ is cofinite with
$\{\fm^n \}_{n=0}^{\infty}$. The consequence is clear.
\end{proof}

We also need an easy lemma about the Gel'fand-Kirillov dimension
of a connected coalgebra. Let $C$ be a connected coalgebra
with coradical filtration $\{C_n\}_{n=0}^{\infty}$. Assume that
each $C_n$ is finite dimensional over $k$. Define
the {\it Gel'fand-Kirillov dimension} of $C$ to be
\begin{equation}
\label{E5.10.1}\tag{E5.10.1}
\GKdim_c C:=\overline{\lim_{n\rightarrow \infty}}
\log_n (\dim_k C_n).
\end{equation}

\begin{lemma}
\label{yylem5.11} Let $H$ be a connected Hopf algebra.
Assume that $k$ has characteristic zero as always.
\begin{enumerate}
\item[(1)]
If $H_1$ is infinite dimensional, then $\GKdim H=\infty$.
\item[(2)]
Suppose $H_1$ is finite dimensional. Then
$\GKdim_c H=\GKdim H$.
\end{enumerate}
\end{lemma}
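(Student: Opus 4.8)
The plan is to handle the two parts separately. Part~(1) will be a quick consequence of the PBW theorem, while part~(2) rests on comparing the coradical filtration with the algebra growth of $H$ and then invoking the structure theory quoted at the start of this section.

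For part~(1), recall that $U(P(H))$ is a Hopf subalgebra of $H$ by \cite[Corollary 5.4.7]{Mo}, and that Gel'fand--Kirillov dimension is monotone under passage to subalgebras. If $H_1 = k \oplus P(H)$ is infinite dimensional, then $P(H)$ contains, for every $d \geq 1$, linearly independent elements $x_1, \dots, x_d$. By the PBW theorem the ordered monomials in $x_1, \dots, x_d$ of total degree at most $m$ are linearly independent in $U(P(H)) \subseteq H$, and there are $\binom{m+d}{d}$ of them, which grows like $m^d$. Hence $\GKdim H \geq d$ for every $d$, so $\GKdim H = \infty$.

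For part~(2) I would first check that $\GKdim_c H$ is well defined, i.e. that each $H_n$ is finite dimensional. The iterated reduced coproduct induces an injection $H_n/H_{n-1} \hookrightarrow P(H)^{\otimes n}$ for a connected coalgebra (a standard property of the coradical filtration; cf. \cite[\S5.3]{Mo}), and since $\dim_k P(H) = \dim_k H_1 - 1 < \infty$, every $H_n$ is finite dimensional. Next I record that $\GKdim_c H = \GKdim \gr_c H$: the partial sums of the Hilbert series of the graded algebra $\gr_c H = \bigoplus_{n} H_n/H_{n-1}$ are precisely $\dim_k H_n$, so $\overline{\lim}_n \log_n \dim_k H_n$ is exactly the algebra Gel'fand--Kirillov dimension of $\gr_c H$. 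Finally, since the coradical filtration is an algebra filtration \cite[Lemma 5.2.8]{Mo}, any finite-dimensional $V \subseteq H$ lies in some $H_r$, so $V^m \subseteq H_{rm}$ and $\dim_k V^m \leq \dim_k H_{rm}$; after a harmless change in the base of the logarithm this yields $\GKdim H \leq \GKdim_c H$, with no finiteness hypothesis. In particular the case $\GKdim H = \infty$ is already settled, as it forces $\GKdim_c H = \infty$ too.

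It remains to treat $\GKdim H = n < \infty$, which is where the essential input appears. Here $H$ is a connected Hopf algebra of finite GK-dimension, so by \cite[Theorem 6.9 and Corollary 6.10]{Zh} (as quoted at the opening of this section) $\gr_c H$ is isomorphic to a commutative polynomial ring $k[x_1, \dots, x_n]$ in $n$ positively graded indeterminates. The Hilbert function of such a ring grows polynomially of degree exactly $n$, whatever the weights of the $x_i$, so $\GKdim_c H = \GKdim \gr_c H = n = \GKdim H$, proving part~(2). The main obstacle is precisely this reverse inequality $\GKdim_c H \leq \GKdim H$: a priori, passing to $\gr_c H$ could inflate the dimension, and the coradical filtration is in general \emph{not} generated in degree $1$ (as the example $L$ of \S\ref{yysec5.3} shows), so a naive filtered-to-graded comparison is unavailable. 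The device that circumvents this is to reduce to the finite GK-dimensional case and appeal to the identification of $\gr_c H$ with a polynomial ring, turning the computation into the elementary one for a weighted Hilbert series.
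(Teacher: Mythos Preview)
Your proof is correct and follows essentially the same strategy as the paper's: both arguments rest on \cite[Theorem 6.9]{Zh} to identify $\gr_c H$ with a polynomial ring in $n$ variables when $\GKdim H = n < \infty$, and then read off $\GKdim_c H$ from the Hilbert growth. The organization differs slightly. The paper first reduces to $H = \gr_c H$ via the two equalities $\GKdim_c H = \GKdim_c \gr_c H$ (trivial, since $\gr_c H$ is coradically graded) and $\GKdim H = \GKdim \gr_c H$ (from \cite{Zh}), and then invokes Theorem~\ref{yythm2.4}(1),(5) to compute both dimensions from the same Hilbert series formula. You instead prove the inequality $\GKdim H \leq \GKdim_c H$ directly from the algebra-filtration property of the coradical filtration, which has the mild advantage of dispatching the case $\GKdim H = \infty$ without needing to check whether \cite[Theorem 6.9]{Zh} applies there.

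One small caution: your intermediate assertion that $\GKdim_c H = \GKdim \gr_c H$, justified by saying the Hilbert series growth ``is exactly the algebra Gel'fand--Kirillov dimension of $\gr_c H$'', is not valid for arbitrary locally finite connected graded algebras---equality of GK-dimension with Hilbert series growth requires affineness (e.g.\ \cite[Lemma 6.1]{KL}), and $\gr_c H$ is not generated in degree~1 in general (the example $L$ has signature $(1,1,1,2,2)$). Fortunately you never actually use this equality: in the infinite case you argue via $\GKdim H \leq \GKdim_c H$, and in the finite case $\gr_c H$ is affine by \cite{Zh}, so both quantities are computed independently to be $n$. You could simply delete that sentence without loss.
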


\begin{proof}
(1) In this case, by \cite[Lemma 5.3.3]{Mo} $H$ contains $U(P(H))$ where
$P(H)$ is an infinite dimensional Lie
algebra. The assertion follows from \cite[Lemma 6.5]{KL}.

(2) When $H_1$ is finite dimensional,
each $H_n$ is finite dimensional \cite[Lemma 5.3]{Zh}.
Passing from $H$ to $\gr_c H$ does not change the
$k$-dimension sequence of the coradical filtration, as
noted in \cite[Definition 1.13]{AS1}. Hence
$\GKdim_c H=\GKdim_c \gr_c H$. On the other hand,
by \cite[Theorem 6.9]{Zh},
$\GKdim H=\GKdim \gr_c H$. Hence we can assume that
$H$ is $\gr_c H$ which is both connected graded and coradically
graded \cite[Definition 2.1]{Zh}. By definition, $\GKdim_c H$
is defined by using the Hilbert series of $H$, see \eqref{E5.10.1}.
By Theorem \ref{yythm0.3}(1,5), $\GKdim H$
can be computed by using the Hilbert series of
$H$, in fact, by the same formula as \eqref{E5.10.1}.
Therefore $\GKdim_c H=\GKdim H$.
\end{proof}

Now let $L$ be the graded Hopf algebra defined in $\S$\ref{yysec5.3}.
Let $\fm=\ker \epsilon=L_{\geq 1}$
and let $\widehat{L}$ be the completion of $L$ with respect to the
$\fm$-adic topology. We study $\widehat{L}$ in the next proposition,
part (6) of which is needed to prove Corollary \ref{yycor0.6}(5).
We show in particular that $\widehat{L}$ is an iterated skew power
series algebra of the type whose basic properties are described
in \cite{SV}. In the interest of brevity we leave the routine
checking of some of the details here to the reader.

\begin{proposition}
\label{yypro5.12}
Let $B:=\widehat{L}$ be defined as above.
\begin{enumerate}
\item[(1)]
$\{c^i a^j b^k z^l w^m\mid i,j,k,l,m\geq 0\}$ is
a topological basis of $B$.
\item[(2)]
$B$ is an iterated Ore extension of a similar
pattern to that given in the proof
of Lemma \ref{yylem5.5}, but now in the
setting of formal power series.
\item[(3)]
The centre $Z(B)$ is the formal power series ring $k[[c]]$.
\item[(4)]
The commutator ideal $\langle [B,B]\rangle$ of $B$ is the principal
ideal $cB$.
\item[(5)]
The factor ring $B/\langle [B,B]\rangle$ is a formal power
series ring in four commuting variables.
\item[(6)]
There is no positively graded {\rm{(}}hence nilpotent{\rm{)}}
Lie algebra $\fg$ such that $B$ is isomorphic to
$\widehat{U(\fg)}$, where the completion is with respect to the
maximal ideal $\fb:=\ker \epsilon_{U(\fg)}$.
\end{enumerate}
\end{proposition}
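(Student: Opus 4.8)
The strategy is to pass to the associated graded rings of the $\fm$- and $\fb$-adic filtrations, identify them, use a graded-algebra invariant to pin down the isomorphism type of $\fg$, and finally separate the two completions by locating their central elements inside the respective maximal ideals.

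First I would note that $\widehat{L}$ and $\widehat{U(\fg)}$ are complete local $k$-algebras whose maximal ideals $\widehat{\fm}$ and $\widehat{\fb}$ each coincide with the set of non-units; hence any $k$-algebra isomorphism $\phi\colon\widehat{L}\to\widehat{U(\fg)}$ satisfies $\phi(\widehat{\fm})=\widehat{\fb}$, and therefore $\phi(\widehat{\fm}^{i})=\widehat{\fb}^{i}$ for all $i$. This induces a graded algebra isomorphism $\gr_{\fm}L\cong\gr_{\fb}U(\fg)$, since completion does not alter $\fm^{i}/\fm^{i+1}$. By \cite[Proposition 3.4(a)]{GZ}, $\gr_{\fm}L\cong U(\mathfrak a)$ for a positively graded nilpotent Lie algebra $\mathfrak a$, and a short computation with the relations of $L$ identifies it: the classes $\bar a,\bar b,\bar z,\bar w$ span $\fm/\fm^{2}$, the only nonzero degree-$2$ bracket is $[\bar a,\bar b]=\bar c$, while $[\bar z,\bar w]=\overline{\tfrac13 c^{3}}=0$ in $\fm^{2}/\fm^{3}$ because $\tfrac13 c^{3}\in\fm^{6}$. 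Thus $\mathfrak a\cong\mathfrak h_{1}\oplus k\bar z\oplus k\bar w$, the Heisenberg algebra with a $2$-dimensional centre adjoined, so $\dim_k Z(\mathfrak a)=3$. On the other side, by \cite[Proposition 3.4(a)]{GZ} and the standard identification of the primitives, $\gr_{\fb}U(\fg)\cong U(\gr_{\gamma}\fg)$, where $\gr_{\gamma}\fg$ is the associated graded Lie algebra of $\fg$ for its lower central series.

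Next I would analyse the graded isomorphism $U(\gr_{\gamma}\fg)\cong U(\mathfrak a)$. Matching Hilbert series and invoking the uniqueness in Theorem \ref{yythm2.4}(1) gives $\dim_k(\gr_{\gamma}\fg)_i=\dim_k\mathfrak a_i$ for all $i$, so $\gr_{\gamma}\fg$ is concentrated in degrees $1,2$ with $\dim_k(\gr_{\gamma}\fg)_1=4$ and $\dim_k(\gr_{\gamma}\fg)_2=1$; in particular it, and hence $\fg$, is $2$-step nilpotent. For a graded algebra $A$ the subspace $R(A):=\{u\in A_1:[u,v]=0\text{ in }A_2\text{ for all }v\in A_1\}$ is an isomorphism invariant, and $\dim_k R(U(\mathfrak a))=2$ (namely $\bar z,\bar w$; note $\bar a,\bar b\notin R$ since $[\bar a,\bar b]=\bar c\neq0$). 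For $U(\gr_{\gamma}\fg)$ this $R$ is exactly the radical of the alternating commutator form $\Lambda^{2}(\gr_{\gamma}\fg)_1\to(\gr_{\gamma}\fg)_2$, so that form on the $4$-dimensional degree-$1$ part has a $2$-dimensional radical, i.e.\ rank $2$. Since a $5$-dimensional $2$-step nilpotent Lie algebra with $1$-dimensional commutator is determined up to isomorphism by the rank of this form, rank $2$ rules out the rank-$4$ competitor $\fn$ of Theorem \ref{yythm5.6} and forces $\gr_{\gamma}\fg\cong\mathfrak a$. A $2$-step nilpotent Lie algebra is always isomorphic to its lower-central-series graded algebra, so $\fg\cong\mathfrak a$, and in particular $\dim_k Z(\fg)=3>1=\dim_k[\fg,\fg]$.

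Finally I would derive the contradiction from the placement of central elements. Since $Z(\widehat{L})=k[[c]]$ by part (3) and $c=[a,b]\in\fm^{2}$, we have $Z(\widehat{L})\cap\widehat{\fm}=c\,k[[c]]\subseteq\widehat{\fm}^{2}$. Because $\dim_k Z(\fg)>\dim_k[\fg,\fg]$, there is $\eta\in Z(\fg)\setminus[\fg,\fg]$; this $\eta$ is central in $\widehat{U(\fg)}$ and lies in $\widehat{\fb}$, but its image in $\fb/\fb^{2}=\fg/[\fg,\fg]$ is nonzero, so $\eta\notin\widehat{\fb}^{2}$. As $\phi$ carries $Z(\widehat{L})\cap\widehat{\fm}$ onto $Z(\widehat{U(\fg)})\cap\widehat{\fb}$ and $\widehat{\fm}^{2}$ onto $\widehat{\fb}^{2}$, the inclusion $Z(\widehat{L})\cap\widehat{\fm}\subseteq\widehat{\fm}^{2}$ would force $\eta\in\widehat{\fb}^{2}$, a contradiction. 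Hence no such $\fg$ exists. The decisive difficulty is the middle step: a bare \emph{algebra} isomorphism of the associated gradeds retains only partial Lie-theoretic data, so the real work is to show that the invariant $R$ nonetheless pins down $\gr_{\gamma}\fg$, and then to transport this conclusion back to $\fg$ itself through the $2$-step property; the reduction at the start and the central-element count at the end are comparatively routine.
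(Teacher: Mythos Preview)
Your argument for part (6) is correct, and it follows a genuinely different route from the one the paper (tersely) indicates. The paper says that ``the key idea of comparison of finite dimensional factors, used to prove Theorem~\ref{yythm5.6}(3), transfers to this complete setting''. Unpacked, that means: use parts (3)--(5) to force $\dim_k Z(\fg)=1$, so that $\fg$ must be the rank-$4$ algebra $\fn$ of Theorem~\ref{yythm5.6} (the one with $[x_1,x_2]=x=[x_3,x_4]$), and then compare the centres of the finite-dimensional quotients $\widehat{\fm}/\widehat{\fm}^{3}$ and $\widehat{\fb}/\widehat{\fb}^{3}$ exactly as in Proposition~\ref{yypro5.7}, obtaining $13\neq 11$.

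You instead pass to $\gr_{\fm}L\cong\gr_{\fb}U(\fg)$, identify the left-hand side with $U(\mathfrak a)$ for $\mathfrak a=\mathfrak h_1\oplus k^2$ (as noted in Remark~\ref{yyrem5.13}), and use the graded invariant $R$ to force $\gr_{\gamma}\fg\cong\mathfrak a$ and hence $\fg\cong\mathfrak a$, the rank-$2$ algebra. Your final contradiction --- a central element of $\widehat{U(\fg)}$ lying in $\widehat{\fb}\setminus\widehat{\fb}^{2}$, versus $Z(\widehat L)\cap\widehat{\fm}\subseteq\widehat{\fm}^{2}$ --- is clean and avoids the explicit $13$-versus-$11$ count. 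It is amusing that the two approaches pin $\fg$ down as \emph{different} Lie algebras ($\fn$ versus $\mathfrak a$); juxtaposing them gives an even shorter proof, since one need only note that both identifications cannot hold simultaneously. One small point: the identification $\gr_{\fb}U(\fg)\cong U(\gr_{\gamma}\fg)$ that you invoke is indeed standard for nilpotent $\fg$, but it might be worth a sentence or a reference, since \cite[Proposition 3.4(a)]{GZ} alone only gives $\gr_{\fb}U(\fg)\cong U(\mathfrak h)$ for \emph{some} graded Lie algebra $\mathfrak h$. Your Hilbert-series and $R$-invariant steps, however, go through with $\mathfrak h$ in place of $\gr_{\gamma}\fg$, and the conclusion $\fg$ is $2$-step with $1$-dimensional derived subalgebra already follows from $\dim_k\fb/\fb^2=4$, so the argument is robust.
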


\begin{proof}
(1) This follows from Lemma \ref{yylem5.10}.

(2,3,4,5) These all follow by a straightforward
imitation of the proofs of Lemma \ref{yylem5.5}
and Theorem \ref{yythm5.6}. Note in particular
that the skew formal power series extensions
involve only derivations, and that these
derivations are all \emph{$\mathrm{id}$-nilpotent}
in the sense of \cite[Definition, page 354]{SV},
thus ensuring that the extensions are well-defined.

(6) Note that if $\fg$ is a nilpotent Lie algebra,
then $\bigcap_{i=0}^{\infty} \fb^i=\{0\}$. When $\fg$
is positively graded, the PBW basis of $U(\fg)$ serves
as a topological basis of $\widehat{U(\fg)}$ by
Lemma \ref{yylem5.10}. Now, the key idea of comparison
of finite dimensional factors, used to prove Theorem
\ref{yythm5.6}(3), transfers to this complete setting; details
are omitted. Therefore the assertion holds.
\end{proof}

For the proof of Corollary \ref{yycor0.6}(5) we introduce
some further notation. Let $C$ be a connected coalgebra
with coradical filtration $\{C_n\}$, and let $C^*$ denote the
complete algebra
$$C^* \; := \; \lim_{\longleftarrow n} (C_n)^*.$$
On the other hand, let
$D=\bigoplus_{i=0}^{\infty} D(i)$ be a connected graded coalgebra,
and define a second type of dual algebra, the {\it graded dual} of $D$,
$$D^{\gr *} \; := \; \bigoplus_{i=0}^{\infty} (D(i))^*,$$
so, of course, $D^{\gr *}$ is a connected graded algebra.
Now consider again a connected coalgebra $C$ with coradical
filtration $\{C_n\}$, which is also a connected graded coalgebra,
$$C := \bigoplus_{i=0}^{\infty}C(i), $$
so $C(0) = k = C_0$ and
$\Delta (C(i)) \subseteq \bigoplus_{j=0}^{i}C(j) \otimes C(i-j)$
for $i \geq 0$. Since, by definition,
$C_i = \Delta^{-1}(C \otimes C_{i-1} + C_0 \otimes C)$,
\cite[(5.2.1)]{Mo}, an easy induction on $i$ shows that
$\oplus_{j=0}^i C(j) \subseteq C_i$ for all $i$. Assume
that $C$ is an artinian coalgebra, so in particular
$\dim_k C(i) < \infty$ for all $i$. Then it is
straightforward to show that
\begin{equation}
\label{E5.12.1}\tag{E5.12.1}
C^{*}= \lim_{\longleftarrow n} (C_n)^* \cong
\lim_{\longleftarrow n} (\bigoplus_{i=0}^n C(i))^*
\cong \lim_{\longleftarrow n} C^{\gr *}/(C^{\gr *})_{> n}
=\widehat{C^{\gr *}}.
\end{equation}

Now we are ready to prove Corollary \ref{yycor0.6}.

\begin{proof}[Proof of Corollary \ref{yycor0.6}]
The algebra $L$ defined in $\S$\ref{yysec5.3} is
the algebra $H$ of Corollary
\ref{yycor0.6}.

(1) Since $H$ is connected, this follows from \cite[Theorem 1.2]{Zh}.

(2) If a filtration of $H$ exists such that $\gr H$ is a
polynomial algebra generated in degree 1, then
Lemma \ref{yylem5.9} and Theorem \ref{yythm5.6}(3) yield a contradiction.

(3) This follows from \cite[Proposition 3.4(a)]{GZ}.

(4) This is Lemma \ref{yylem5.5}(3).

(5) Let $D$ be the graded $k$-linear dual of $H$, and
suppose for a contradiction that $K$ is a connected Hopf algebra
such that $D$ is isomorphic, as a coalgebra, to $\gr_c K$.
Then the space of primitive elements of $K$
is, like that of $D$, finite dimensional. By \cite[Theorem 6.9]{Zh}
and Lemma \ref{yylem5.11}(2),
$$\GKdim \gr_c K=\GKdim K=\GKdim_c K=\GKdim_c D=\GKdim D=5.$$
By Theorem \ref{yythm4.1}, $\gr_c K$, $K$, $D$ and $\gr_c D$
are artinian as coalgebras. So we can use \eqref{E5.12.1}.
Let $T$ be the graded dual of $\gr_c K$. Then $T$ is a Hopf
algebra that is connected and cocommutative as a coalgebra and
connected graded, and generated in degree 1, as an algebra. By the
discussion in $\S$0.2 or Theorem \ref{yythm0.3}(2),
$T\cong U(\fg)$ as algebras where $\fg$ is a positively
graded Lie algebra of dimension 5.
Since $D$ is the graded dual of $H$, by \eqref{E5.12.1}
we have algebra isomorphisms,
$$D^*\cong \widehat{D^{\gr *}}
=\widehat{H}.$$
Similarly, by \eqref{E5.12.1},
we have algebra isomorphisms,
$$(\gr_c K)^* \cong \widehat{(\gr_c K)^{\gr *}}
\cong \widehat{T}\cong \widehat{U(\fg)}.$$
Since we assumed that $D\cong \gr_c K$ as coalgebras, we
obtain that $\widehat{H}\cong \widehat{U(\fg)}$,
which contradicts Proposition \ref{yypro5.12}(6).
\end{proof}

\begin{remark}
\label{yyrem5.13}
It is not hard to calculate the Lie algebra
$\mathfrak{g}$ appearing in
Corollary \ref{yycor0.6}(3) - it is the direct
sum of a 3-dimensional Heisenberg algebra with
a 2-dimensional abelian algebra.
\end{remark}

\section{Appendix: Proof of Lemma \ref{yylem5.1}}
\label{yysec6}

Here we give a complete proof of Lemma \ref{yylem5.1}, as follows.

\begin{lemma}
\label{yylem6.1}
Let $J$ be the algebra defined as in $\S$\ref{yysec4.1}.
Let $\Delta:J\rightarrow J\otimes J$  and $\epsilon:J\rightarrow k$ be the
maps such that
\begin{equation}\label{1}
\Delta(a)=1\otimes a+a\otimes 1; \quad \Delta(b)=1\otimes b+b\otimes 1;
\quad \Delta(c)=1\otimes c+c\otimes 1;
\end{equation}
\begin{equation}\label{3}
\Delta(z)=1\otimes z+a\otimes c-c\otimes a+z\otimes 1;
\end{equation}
\begin{equation}\label{4}
\Delta(w)=1\otimes w+b\otimes c-c\otimes b+w\otimes 1;
\end{equation}
\begin{equation}\label{d}
\Delta(d)=1\otimes d+c\otimes c^{2}+c^{2}\otimes c+d\otimes 1;
\end{equation}
and
\begin{equation}\label{5}
\epsilon(a)=\epsilon(b)=\epsilon(c)
=\epsilon(z)=\epsilon(w)=\epsilon(d)=0.
\end{equation}
Then these maps satisfy the following properties:
\begin{enumerate}
\item[(1)]
$\Delta$ defines an algebra homomorphism.
\item[(2)]
$\epsilon$ defines an algebra homomorphism.
\item[(3)]
$\Delta$ is coassociative.
\item[(4)]
$(1\otimes\epsilon)\circ\Delta=1=(\epsilon\otimes 1)\circ\Delta$,
where $1$ denotes the identity map on $J$.
\end{enumerate}
As a result, $(J, \Delta, \epsilon)$ is a bialgebra.
\end{lemma}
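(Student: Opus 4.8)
The plan is to exploit the universal property of the enveloping algebra $J=U(\mathfrak{g})$ throughout. An algebra homomorphism out of $U(\mathfrak{g})$ is the same datum as a Lie algebra map from $\mathfrak{g}$ into the target, viewed as a Lie algebra under commutators; so to establish (1) I only need to check that the prescribed images $\Delta(a),\dots,\Delta(d)$ in $J\otimes J$ satisfy the same bracket relations as the generators $a,\dots,d$ of $\mathfrak{g}$, namely $[a,b]=c$, $[z,w]=d$, centrality of $c$ and $d$, and the vanishing of all cross brackets between $\mathfrak{h}_1$ and $\mathfrak{h}_2$. For (2), $\epsilon$ sends every generator to $0$, so every defining relation is trivially preserved and $\epsilon$ is an algebra map with no further work.

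For (1) the first simplification I would record is that, since $c$ and $d$ are central in $J$, each of $1\otimes c$, $c\otimes 1$, $c\otimes c^2$, $c^2\otimes c$, $1\otimes d$, $d\otimes 1$ is central in $J\otimes J$; hence both $\Delta(c)$ and $\Delta(d)$ are central in $J\otimes J$, which disposes at once of every relation asserting that $c$ or $d$ is central. The relation $[\Delta(a),\Delta(b)]=\Delta(c)$ is immediate because $a,b,c$ are primitive and the two mixed tensor legs commute, leaving $1\otimes[a,b]+[a,b]\otimes 1=\Delta(c)$. The remaining cross relations such as $[\Delta(a),\Delta(z)]=0$ I would verify by expanding each bracket into its elementary commutators $[u_1\otimes u_2,\,v_1\otimes v_2]=u_1v_1\otimes u_2v_2-v_1u_1\otimes v_2u_2$ and observing that every term vanishes, using $[a,z]=[a,c]=0$ and centrality of $c$.

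The crux, and the one genuinely non-obvious computation, is the verification that $[\Delta(z),\Delta(w)]=\Delta(d)$. Here I would expand the commutator of the four-term expressions for $\Delta(z)$ and $\Delta(w)$; almost every cross term vanishes by centrality of $c$ and by the vanishing of brackets between $\mathfrak{h}_1$ and $\mathfrak{h}_2$, and the surviving contributions are precisely $[1\otimes z,\,1\otimes w]=1\otimes d$, $[z\otimes 1,\,w\otimes 1]=d\otimes 1$, $[a\otimes c,\,b\otimes c]=[a,b]\otimes c^2=c\otimes c^2$, and $[-c\otimes a,\,-c\otimes b]=c^2\otimes[a,b]=c^2\otimes c$. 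Summing these reproduces $\Delta(d)=1\otimes d+c\otimes c^2+c^2\otimes c+d\otimes 1$ exactly; the point is that the bracket $[a,b]=c$ sitting in the two tensor legs is what manufactures the otherwise mysterious terms $c\otimes c^2+c^2\otimes c$. This is the step I expect to be the main obstacle, both in the algebra-map check and, in a parallel form, in the coassociativity check on $d$.

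Finally, for (3) and (4) I would again invoke the universal property: $(\Delta\otimes\mathrm{id})\circ\Delta$ and $(\mathrm{id}\otimes\Delta)\circ\Delta$ are both algebra homomorphisms $J\to J^{\otimes 3}$, while $(\mathrm{id}\otimes\epsilon)\circ\Delta$, $(\epsilon\otimes\mathrm{id})\circ\Delta$ and $\mathrm{id}_J$ (after the canonical identifications $J\otimes k\cong J\cong k\otimes J$) are algebra endomorphisms of $J$. It therefore suffices to check the coassociativity and counit identities on the six algebra generators. For $a,b,c,z,w$ these are short expansions; the only laborious case is $d$, where one must use $\Delta(c^2)=1\otimes c^2+2c\otimes c+c^2\otimes 1$ and confirm that the ten resulting tensors of $(\Delta\otimes\mathrm{id})\Delta(d)$ agree termwise with those of $(\mathrm{id}\otimes\Delta)\Delta(d)$. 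Once all four properties are verified, the bialgebra axioms for $(J,\Delta,\epsilon)$ hold by definition, completing the proof.
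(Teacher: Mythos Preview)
Your approach is correct and is essentially the same as the paper's, differing only in bookkeeping. The paper frames part (1) via the free algebra $F$ on $\{a,b,c,z,w,d\}$ and checks that $\Delta'(I)\subseteq I\otimes F+F\otimes I$ for the relation ideal $I$, using the reduced map $\delta'(t)=\Delta'(t)-t\otimes 1-1\otimes t$; you instead invoke the universal property of $U(\mathfrak{g})$ and verify the Lie relations among the images $\Delta(a),\dots,\Delta(d)$ directly in $J\otimes J$. These are two packagings of the same computation. Likewise for (3), the paper reduces to the equivalent identity $(\delta\otimes 1)\circ\delta=(1\otimes\delta)\circ\delta$ on $z,w,d$, which shortens the check for $d$ to the single observation $\delta(c^2)=2c\otimes c$; your direct expansion of $(\Delta\otimes\mathrm{id})\Delta(d)$ versus $(\mathrm{id}\otimes\Delta)\Delta(d)$ works equally well but involves a few more terms. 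One small caution: in the cross relations such as $[\Delta(b),\Delta(z)]=0$, it is not quite true that ``every term vanishes''; for instance $[1\otimes b,\,-c\otimes a]=c\otimes c$ and $[b\otimes 1,\,a\otimes c]=-c\otimes c$ are nonzero but cancel in pairs, so a little more care is needed there than your sketch suggests.
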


\begin{proof} (1) Define a free algebra $F$ on generators
$$\{a,b,c,z,w,d\}$$ and a map $\Delta':F\rightarrow F\otimes F$
defined on these generators as in equations $(\ref{1})-(\ref{d})$.
(Here we are abusing notation by using the same letters for generators of
$F$ and their images in $J$.) Since $F$ is a free algebra on these
generators, we can extend $\Delta$ multiplicatively so that it defines an
algebra homomorphism $F\rightarrow F\otimes F$. Setting $I$ to be the ideal
generated by the relations defining the Lie algebra $\mathfrak{g}$, the
algebra $J$ (or $H$) is the factor algebra $F/I$. Therefore, to check that
there exists an algebra homomorphism $\Delta:J\rightarrow J\otimes J$
defined on generators as in equations $(\ref{1})-(\ref{d})$, it suffices
to prove that the algebra homomorphism $\Delta':F\rightarrow F\otimes F$
defined above satisfies
\begin{equation} \label{E7.1.7}
\Delta'(I)\subseteq I\otimes F+F\otimes I.
\end{equation}
Since $\Delta': F\rightarrow F\otimes F$ is an algebra homomorphism, it
suffices to check this on the generators of $I$, that is, on the relations
defining $\mathfrak{g}$. We list these relations below
$$\begin{aligned}
& [a,b]-c, \;\; [a,c], \;\; [b,c], \;\;   \\
&[a,z], \;\; [b,z],\;\; [c,z],\;\; [a,w],\;\; [b,w],\;\;[c,w],\;\;\\
& [z,w]-d,\;\; [a,d],\;\; [b,d],\;\; [c,d],\;\; [z,d],\;\; [w,d].
\end{aligned}
$$
This is already immediately clear for the ideal generators which involve
only PBW-generators set to be primitive under $\Delta'$, so it suffices
to check only those involving $z,w$ or $d$,. For each $t\in F$, define $\delta'(t)
=\Delta'(t)-t\otimes 1-1\otimes t$. It is trivial that, for $t\in I$,
$\Delta'(t)\in I\otimes F+F\otimes I$ if and only if $\delta'(t)\in
I\otimes F+F\otimes I$. It is easy to check that
$$\delta'([t_1,t_2])=[t_1\otimes 1+1\otimes t_1, \delta'(t_2)]+
[\delta'(t_1), t_2\otimes 1+1+\otimes t_2]+[\delta'(t_1),\delta'(t_2)].$$
Next we check that $\delta'(t)\in
I\otimes F+F\otimes I$ for all $t$ in the third and fourth row of ideal
generators.

\begin{enumerate}
\item{

Generator: $[a,z]$.
$$\begin{aligned}
\; \delta'([a,z])=&[1\otimes a+a\otimes 1, \delta'(z)]=
[1\otimes a+a\otimes 1, a\otimes c-c\otimes a]\\
=&a\otimes [a,c]+[a,c]\otimes a \quad \in \quad I\otimes F+F\otimes I.
\end{aligned}
$$
}

\item{Generator: $[b,z]$.
$$\begin{aligned}
\; \delta'([b,z])=&[1\otimes b+b\otimes 1, \delta'(z)]=
[1\otimes b+b\otimes 1, (a\otimes c-c\otimes a)]\\
=&a\otimes[b,c]-c\otimes [b,a]+[b,a]\otimes c-[b,c]\otimes a\\
=& a\otimes[b,c]-[b,c]\otimes a -c\otimes ([b,a]+c)+([b,a]+c)\otimes c\\
&\in I\otimes F+F\otimes I.
\end{aligned}
$$
}

\item{Generator: $[c,z]$.
$$\begin{aligned}
\; \delta'([c,z])=&[1\otimes c+c\otimes 1, (a\otimes c-c\otimes a)]\\
=&-c\otimes [c,a]+[c,a]\otimes c \quad \in \quad I\otimes F+F\otimes I.
\end{aligned}
$$
}

\item{Generator: $[a,w]$
$$\begin{aligned}
\; \delta'([a,w])=&[1\otimes a+a\otimes 1,(b\otimes c-c\otimes b)]\\
=&b\otimes[a,c]-c\otimes[a,b]+[a,b]\otimes c-[a,c]\otimes b\\
=&b\otimes[a,c]-c\otimes ([a,b]-c)+([a,b]-c)\otimes c-[a,c]\otimes b\\
&\in I\otimes F+F\otimes I.
\end{aligned}
$$
}

\item{Generator: $[b,w]$
$$\begin{aligned}
\;  \delta'([b,w])=&[1\otimes b+b\otimes 1,(b\otimes c-c\otimes b)]\\
&\in I\otimes F+F\otimes I.
\end{aligned}
$$

}

\item{Generator: $[c,w]$
$$\begin{aligned}
\; \delta'([c,w])=&[1\otimes c+c\otimes 1, (b\otimes c-c\otimes b)]\\
&\in I\otimes F+F\otimes I.
\end{aligned}
$$
}

\item{
Generator: $[z,w]-d$.
$$\begin{aligned}
\; \delta'([z,w]-d)=&([1\otimes z+z\otimes 1, (b\otimes c-c\otimes b)]
+[(a\otimes c-c\otimes a), 1\otimes w+w\otimes 1]\\
&+[(a\otimes c-c\otimes a),(b\otimes c-c\otimes b)]-\delta'(d)\\
=&(b\otimes [z,c]-c\otimes [z,b]+[z,b]\otimes c-[z,c]\otimes b)\\
&+(a\otimes [c,w]+[a,w]\otimes c-c\otimes [a,w]-[c,w]\otimes a)\\
&-ac\otimes cb+ca\otimes bc-cb\otimes ac+bc\otimes ac\\
&+([a,b]-c)\otimes c^{2}+c^{2}\otimes ([a,b]-c))\\
=&(b\otimes [z,c]-c\otimes [z,b]+[z,b]\otimes c-[z,c]\otimes b)\\
&+(a\otimes [c,w]+[a,w]\otimes c-c\otimes [a,w]-[c,w]\otimes a)\\
&+[c,a]\otimes cb-ca\otimes [c,b]+[b,c]\otimes ac-bc\otimes [a,c]\\
&+([a,b]-c)\otimes c^{2}+c^{2}\otimes ([a,b]-c))\\
&\in I\otimes F+F\otimes I.
\end{aligned}
$$
}

\item{

Generator: $[a,d]$.
$$\begin{aligned}
\; \delta'([a,d])=&c\otimes [a,c^{2}]+c^{2}\otimes [a,c]+[a,c]\otimes c^{2}
+[a,c^{2}]\otimes c\\
&\in I\otimes F+F\otimes I.
\end{aligned}
$$
}
\item{

Generator: $[b,d]$.
$$\begin{aligned}
\; \delta'([b,d])=&c\otimes [b,c^{2}]+c^{2}\otimes [b,c]+[b,c]\otimes c^{2}
+[b,c^{2}]\otimes c\\
&\in I\otimes F+F\otimes I.
\end{aligned}
$$
}

\item{

Generator: $[c,d]$.
$$\begin{aligned}
\; \delta'([c,d])=&c\otimes [c,c^{2}]+c^{2}\otimes [c,c]+[c,c]\otimes c^{2}
+[c,c^{2}]\otimes c\\
=&0 \quad \in \quad I\otimes F+F\otimes I.
\end{aligned}
$$
}

\item{

Generator: $[z,d]$
$$\begin{aligned}
\; \delta'([z,d])=&[1\otimes z+z\otimes 1, c \otimes c^2 + c^2 \otimes c]
+[b\otimes c - c\otimes b, 1 \otimes d + d \otimes 1]\\
&\quad +[b\otimes c - c\otimes b, c \otimes c^2 + c^2 \otimes c]\\
=&[z,c]\otimes c^2+[z,c^2]\otimes c+c\otimes [z,c^2]+c^2\otimes [z,c]\\
& +b\otimes [c,d]+[b,d]\otimes c -c\otimes [b,d]-[c,d]\otimes b\\
& +[b,c]\otimes c^3+[b,c^2]\otimes c^2-c^2\otimes [b,c^2]-c^3\otimes [b,c]\\
&\in I\otimes F+F\otimes I.
\end{aligned}
$$
}

\item{

Generator: $[w,d]$
$$\begin{aligned}
\; \delta'([w,d])=&
[1\otimes w + w\otimes 1, c \otimes c^2 + c^2 \otimes c]+
[a \otimes c - c\otimes a, 1 \otimes d + d \otimes 1]\\
& \quad [a\otimes c - c\otimes a, c \otimes c^2 + c^2 \otimes c]\\
=&[w,c]\otimes c^2+[w,c^2]\otimes c+c\otimes [w,c^2]+c^2\otimes
[w,c]\\
&+[a,d]\otimes c-c\otimes [a,d]+a\otimes [c,d]-[c,d]\otimes a\\
&+[a,c]\otimes c^3+[a,c^2]\otimes c^2-c^2\otimes [a,c^2]-c^3\otimes [a,c]\\
&\in I\otimes F+F\otimes I.
\end{aligned}
$$
}
\end{enumerate}

Thus there exists an algebra homomorphism $\Delta:J\rightarrow J\otimes J$
defined on the generators $\{a,b,c,z,w, d\}$ as in equations
$(\ref{1})-(\ref{d})$.

(2) Retain the notation of part (1). Define an algebra homomorphism
$\epsilon':F\rightarrow k$ which takes the value $0$ on all generators
$\{a,b,c,z,w, d\}$ of $F$. To check that there exists an algebra
homomorphism $\epsilon:J\rightarrow k$ defined on generators as in
($\ref{5}$), it suffices to check that $\epsilon'(I)= 0$. This is clear.

(3) As noted in \cite[$\S$1]{GZ}, to check that the algebra
homomorphism $\Delta:J\rightarrow J\otimes J$ is coassociative, it
suffices to check this on the algebra generators
$$\{a,b,c,z,w, d\}$$
of $J$. This is already clear for each of the
generators $a,b,c$, so we are left to check for the generators
$z, w$ and $d$. Let $\delta (t)=\Delta(t)-t\otimes 1-1\otimes t$ for
any $t\in J$. It is routine to check that $\Delta$ is coassociative
if and only if $\delta$ is. So it suffices to show that $\delta$
is coassociative on $z$, $w$ and $d$. We calculate

$$\begin{aligned}
(\delta\otimes 1)\circ\delta(z)&=(\delta\otimes 1)(a\otimes c-c\otimes a)\\
&=\delta(a)\otimes c-\delta(c)\otimes a=0,
\end{aligned}
$$
on the other hand,
$$\begin{aligned}
(1\otimes \delta)\circ\delta(z)&=(1\otimes \delta)(a\otimes c-c\otimes a)\\
&=a\otimes \delta(c)-c\otimes \delta(a)=0.
\end{aligned}
$$
So $(\delta\otimes 1)\circ\delta(z)=(\delta\otimes 1)\circ\delta(z)$.
Similarly, $(\delta\otimes 1)\circ\delta(w)=0=(1\otimes \delta)
\circ\delta(w)$.

Note that $\delta(c^2)=2c\otimes c$. Then
$$\begin{aligned}
(\delta\otimes 1)\circ\delta(d)=&
(\delta\otimes 1)(c\otimes c^2+c^2\otimes c)\\
=&\delta(c)\otimes c^2+\delta(c^2)\otimes c\\
=& 2 c\otimes c\otimes c.
\end{aligned}
$$
By symmetry, $(1\otimes \delta)\circ\delta(d)=2c\otimes c\otimes c$.
Therefore $(\delta\otimes 1)\circ\delta(d)=(1\otimes \delta)\circ\delta(d)$.
Therefore $\delta$ (and then $\Delta$) is coassociative.

(4) Again, as noted in \cite[$\S$1]{GZ}, it suffices to check that
$$(\epsilon\otimes 1)\circ\Delta(g)=g=(1\otimes\epsilon)\otimes\Delta(g)$$
for each algebra generator $g$ of $J$. Noting that  $\epsilon(1)=1_{k}$
and that $\epsilon(g)=0$ for all $g\in\{a,b,c,z,w,d\}$, this is clear.
\end{proof}


\begin{thebibliography}{10}

\bibitem[AOV]{AOV}
J. Alev, A. Ooms and M. Van den Bergh,
\emph{A class of counterexamples to the
Gel'fand-Kirillov conjecture},
Trans. Amer. Math. Soc. 348 (1996), 1709-1716.


\bibitem[AS1]{AS1}
N. Andruskiewitsch and H.-J. Schneider,
\emph{Pointed Hopf algebras}, in
Recent Developments in Hopf Algebra Theory,
MSRI Publications 43 (2002), CUP.

\bibitem[AS2]{AS2}
N. Andruskiewitsch and H.-J. Schneider,
\emph{Finite quantum groups and quantum matrices},
Advances in Math. {\bf 154} (2000), 1-45.

\bibitem[Be]{Be}
D.J. Benson,
\emph{Polynomial Invariants of Finite Groups},
London Math. Soc. Lecture Note series,
Vol. {\bf 1993}.

\bibitem[Br1]{Br1}
K.A. Brown,
\emph{Representation theory of Noetherian Hopf algebras
satisfying a polynomial identity},
in: Trends in the Representation Theory of Finite-Dimensional
Algebras, Seattle, WA, 1997, in:
Contemp. Math., Vol. 229, AMS, Providence, RI, 1998, pp. 49--79.


\bibitem[Br2]{Br2}
K.A. Brown,
\emph{Noetherian Hopf algebras},
Turkish J. Math. 31 (2007), suppl., 7--23.

\bibitem[BG1]{BG1}
K.A. Brown and P. Gilmartin,
\emph{Hopf Algebras under finiteness conditions},
Palestine Journal of Mathematics, {\bf 3} (Spec 1) (2014) , 356--365.

\bibitem[BG2]{BG2}
K.A. Brown and P. Gilmartin,
\emph{Quantum homogeneous spaces of connected Hopf algebras},
preprint (2015), arXiv:1506.02427.

\bibitem[BO]{BO} K.A. Brown, S. O'Hagan, J.J. Zhang and G. Zhuang,
\emph{Connected Hopf algebras and iterated Ore extensions},
J. Pure Appl. Algebra, {\bf 219}  (2015),  no. 6, 2405--2433.

\bibitem[BZ]{BZ}
K.A. Brown and J.J. Zhang,
\emph{Dualising complexes and twisted Hochschild (co)homology for
Noetherian Hopf algebras}, J. Algebra {\bf 320}  (2008),  no. 5,
1814--1850.

\bibitem[EG]{EG}
P. Etingof and S. Gelaki,
\emph{Quasisymmetric and unipotent tensor categories},
Math. Res. Lett. {\bf 15} (2008),  no. 5, 857--866.

\bibitem[FHT1]{FHT1}
Y. F{\'e}lix, S. Halperin and J.-C. Thomas,
\emph{Elliptic Hopf algebras},
J. London Math. Soc. (2) {bf 43} (1991),  no. 3, 545--555.


\bibitem[FHT2]{FHT2}
Y. F{\'e}lix, S. Halperin and J.-C. Thomas,
\emph{Hopf algebras of polynomial growth},
J. Algebra {\bf 125}  (1989),  no. 2, 408--417.


\bibitem[Go1]{Go1}
M.-P. Gong,
Classification of Nilpotent Lie Algebras of Dimension 7,
A Thesis Presented to the University of Waterloo, 1998.

\bibitem[Go2]{Go2}
K.R. Goodearl,
\emph{Noetherian Hopf algebras},
Glasg. Math. J. {\bf 55}  (2013),  no. A, 75--87.


\bibitem[GZ]{GZ}
K.R. Goodearl and J.J. Zhang,
\emph{Noetherian Hopf algebra domains of Gelfand-Kirillov dimension two},
J. Algebra {\bf 324} (11) (2010) 3131--3168.

\bibitem[HT]{HT}
J.-W. He, B. Torrecillas, F. Van Oystaeyen and Y. Zhang,
\emph{Dualizing complexes of Noetherian complete algebras via
coalgebras}, Comm. Algebra {\bf 42}  (2014),  no. 1, 271--285.


\bibitem[Hu]{Hu}
J. E. Humphreys, \emph{Linear Algebraic Groups}, Graduate Texts in Mathematics,
Springer-verlag, Berlin-New York, 1975.


\bibitem[KL]{KL}
G. Krause and T.H. Lenagan,
\emph{Growth of Algebras and gel'and-Kirillov Dimension},
Revised Edition, Graduate Texts in Mathematics {\bf 22},
Amer. Math. Soc. 2000.


\bibitem[Lat]{Lat}
V.N. Latysev,
\emph{Two remarks on PI-algebras},
Sibirsk. Mat. Z. {\bf 4} (1963), 1120--1121.

\bibitem[Laz]{Laz}
M. Lazard,
\emph{Sur la nilpotence de certains groupes algebriques},
C. R. Acad. Sci. Ser. 1 Math. {\bf 41} (1955), 1687 -- 1689.

\bibitem[Le]{Le}
T. Levasseur,
\emph{Some properties of noncommutative regular graded rings},
Glasgow Math. J. {\bf 34}  (1992),  no. 3, 277--300.


\bibitem[LvO1]{LvO1}
H.-S. Li and F. van Oystaeyen,
\emph{Zariskian filtrations}, Kluwer Academic
Publishers, K-monographs in Mathematics, vol. {\bf 2} (1996).

\bibitem[LvO2]{LvO2}
H.-S. Li and F. van Oystaeyen,
\emph{Zariskian filtrations},
Comm. Algebra {\bf 17}  (1989),  no. 12, 2945--2970.

\bibitem[LP]{LP}
D.-M. Lu, J.H. Palmieri, Q.-S. Wu and J.J. Zhang,
\emph{Regular algebras of dimension 4 and their $A_{\infty}$-Ext-algebras},
Duke Math. J. {\bf 137}  (2007),  no. 3, 537--584.

\bibitem[LWZ]{LWZ}
D.-M. Lu, Q.-S. Wu and J.J. Zhang,
\emph{Homological integral of Hopf algebras},
Trans. Amer. Math. Soc. {\bf 359}  (2007),  no. 10, 4945--4975.


\bibitem[MR]{MR}
J. C. McConnell and J. C . Robson,
\emph{Noncommutative Noetherian Rings},
Wiley, Chichester, 1987.


\bibitem[Mil]{Mil}
J. Milnor, \emph{The Steenrod algebra and its dual},
Ann. of Math. (2)  {\bf 67}  (1958) 150--171.

\bibitem[MM]{MM}
J.W. Milnor and J.C. Moore,
\emph{On the structure of Hopf algebras},
Ann. of Math. (2) {\bf 81}  (1965) 211--264.

\bibitem[Mol]{Mol}
R.K. Molnar,
\emph{A commutative noetherian Hopf algebra 
over a field is finitely generated},
Proc. Amer. Math. Soc. {\bf 51}  (1975) 501-502.


\bibitem[Mo]{Mo}
S. Montogomery,
\emph{Hopf Algebras and their Actions on Rings},
CBMS Regional Conference Series in Mathematics
{\bf 82}, Providence, RI, 1993.


\bibitem[NTZ]{NTZ}
C. Nastasescu, B. Torrecillas and Y.H. Zhang,
\emph{Hereditary coalgebras}, Comm. Algebra,
{\bf 24} (1996), no. 4, 1521--1528.


\bibitem[Pas]{Pas}
D.S. Passman,
\emph{Enveloping algebras satisfying a polynomial identity},
J. Algebra {\bf 134} (1990), 469--490.

\bibitem[RRZ]{RRZ}
M. Reyes, D. Rogalski, and J. J. Zhang,
\emph{Skew Calabi-Yau algebras and homological identities},
Adv. Math. {\bf 264} (2014), 308--354.


\bibitem[RU]{RU}
D.Riley and H. Usefi,
\emph{The Isomorphism Problem for Universal Enveloping Algebras},
Algebr. Represent. Theory, {\bf 10} (2007),  no. 6, 517--532.


\bibitem[SV]{SV}
P. Schneider and O. Venjakob,
\emph{On the codimension of modules over power series rings 
with applications to Iwasawa algebras},
J. Pure Appl. Algebra, {\bf 204} (2006), 349--367.

\bibitem[Wa1]{Wa1}
D.-G. Wang, J.J. Zhang and G Zhuang,
\emph{Classification of connected Hopf algebras of GK-dimension four},
Trans. Amer. Math. Soc. {\bf 367} (2015), 5597--5632

\bibitem[Wa2]{Wa2}
D.-G. Wang, J.J. Zhang and G Zhuang,
Primitive cohomology of Hopf algebras, preprint, (2014)
arXiv:1411.4672.

\bibitem[Wa3]{Wa3}
D.-G. Wang, J.J. Zhang, G. Zhuang,
\emph{Coassociative Lie algebras},
Glasg. Math. J. {\bf 55}  (2013), 195--215.

\bibitem[Wat]{Wat}
W.C. Waterhouse,
\emph{An Introduction to Affine Group schemes},
Graduate texts in Mathematics  {\bf 66},
Springer-verlag, Berlin-New York, 1979.

\bibitem[WZ1]{WZ1}
Q.-S. Wu and J.J. Zhang,
\emph{Gorenstein property of Hopf graded algebras},
Glasg. Math. J. {\bf 45}  (2003),  no. 1, 111--115.

\bibitem[WZ2]{WZ2}
Q.-S. Wu and J.J. Zhang,
\emph{Noetherian PI Hopf algebras are Gorenstein},
Trans. Amer. Math. Soc. {\bf 355} (2002), 1043--1066.

\bibitem[Za]{Za}
H. Zassenhaus,
\emph{The representations of Lie algebras of prime characteristic},
Proc. Glasgow Math. Assoc.  2,  (1954), 1--36.

\bibitem[Zh]{Zh}
G. Zhuang,
\emph{Properties of pointed and connected Hopf algebras of finite
Gelfand-Kirillov dimension},
J. London Math. Soc. (2)  {\bf 87}  (2013),  no. 3, 877--898.


\end{thebibliography}
\end{document}